\documentclass[10pt, a4paper]{amsart}


\usepackage{amssymb,amsmath,graphics,verbatim}
\usepackage{latexsym}
\usepackage{eucal}

\setcounter{tocdepth}{2}

\usepackage{epsfig}
\usepackage{color}

\makeatletter
\@addtoreset{equation}{section}\makeatother


\newtheorem{theo}{Theorem}[section]
\newtheorem{lem}[theo]{Lemma}
\newtheorem{prop}[theo]{Proposition}
\newtheorem{cor}[theo]{Corollary}

\theoremstyle{remark} \newtheorem{remark}[theo]{Remark}
\newtheorem{defn}[theo]{Definition}
\newcommand{\mc}{\mathcal}
\newcommand{\rr}{\mathbb{R}}
\newcommand{\nn}{\mathbb{N}}

\newcommand{\la}{\lambda}
\newcommand{\eps}{\epsilon}

\newcommand{\pl}{\partial}
\newcommand{\x}{\times}

\newcommand{\demi}{\frac{1}{2}}

\newcommand{\indic}{\operatorname{1\negthinspace l}}

\newcommand{\zf}{\mathrm{zf}}
\newcommand{\bfo}{\mathrm{bf}_0}
\newcommand{\rbo}{\mathrm{rb}_0}
\newcommand{\lbo}{\mathrm{lb}_0}
\newcommand{\lb}{\mathrm{lb}}
\newcommand{\rb}{\mathrm{rb}}
\newcommand{\bfa}{\mathrm{bf}}
\newcommand{\bfc}{\mathrm{bf}}
\newcommand{\sca}{\mathrm{sc}}
\newcommand{\ff}{\mathrm{ff}}
\newcommand\Id{\operatorname{Id}}
\newcommand\conic{\operatorname{conic}}

\newcommand\Pconic{P_{\conic}}

\newcommand\RR{\mathbb{R}}
\newcommand\CC{\mathbb{C}}
\newcommand\NN{\mathbb{N}}
\newcommand\extunion{\overline{\cup}}
\newcommand\Omegab{\tilde \Omega_b^{1/2}}
\newcommand\MMksc{M^2_{k, \sca}}
\newcommand\Mkb{M_{k, b}}
\newcommand\Tkb{{}^{k, b} T}
\newcommand\Tkbstar{{}^{k, b} T^*}
\newcommand\MMkb{M^2_{k, b}}

\newcommand{\SC}{\ensuremath{\mathrm{sc}}}
\newcommand{\SF}{\ensuremath{\mathrm{s}\Phi}}

\newcommand{\Tscstar}[1][\mbox{}]{{^{\SC}T^*_{#1}}}

\newcommand\Lsharp{L^\#}

\def\scOh{{}^{\operatorname{sc}} \Omega^{1/2}}

\newcommand\ZZb{Z^2_b}
\newcommand\ZZZb{Z^3_b}
\newcommand\ZZbsc{Z^2_{b, \sca}}

\newcommand\phg{\operatorname{phg}}
\newcommand\Omegabht{\tilde \Omega_b^{1/2}}
\newcommand\Omegahbsc{\Omega_{b, sc}^{1/2}}
\renewcommand\Re{\operatorname{Re}}
\renewcommand\Im{\operatorname{Im}}
\def\ang#1{\langle #1 \rangle}
\newcommand\CIdot{\dot C^\infty}

\newcommand\Vsc{\mathcal{V}_{\SC}}
\newcommand\Vb{\mathcal{V}_{b}}
\newcommand\Vkb{\mathcal{V}_{k,b}(\MMkb)}

\newcommand\Lbf{L^{\bfc}}
\newcommand\MMb{M^2_b}

\newcommand{\Nscstar}[1][\mbox{}]{{^{\SC}N^*_{#1}}}
\newcommand\Diagb{\text{Diag}_b}

\newcommand{\Nsfstar}[1][\mbox{}]{{^{\SF}N^*_{#1}}}
\newcommand\CI{C^\infty}
\newcommand\Omegakb{\Omega_{k,b}}
\newcommand\Omegakbh{\Omega_{k,b}^{1/2}}
\newcommand\Omegabh{\Omega_{b}^{1/2}}
\newcommand\Omegasch{\Omega_{\SC}^{1/2}}
\newcommand\mcA{\mathcal{B}}
\newcommand\mcB{\mathcal{B}}
\newcommand\mcE{\mathcal{E}}
\newcommand\Ebar{\overline{\mathcal{E}}}
\newcommand\half{{1/2}}
\newcommand\Lambdat{\tilde \Lambda}
\newcommand\Lambdas{\Lambda^\sharp}
\newcommand\Lambdah{\hat \Lambda}

\newcommand\phgc{\mathcal{A}}

\newcommand\Gb{{ G_{b}}}
\newcommand\Gsc{G^{\SC}}
\newcommand\Esc{E^{\SC} }

\newcommand\gconic{g_{\conic}}
\newcommand\gcyl{g_{\operatorname{cyl}}}
\newcommand\Diagbsc{\mathrm{Diag}_{b, sc}}
\newcommand\Omegabsc{\Omega_{b,sc}^{1/2}}
\newcommand\phit{\tilde \phi}
\def\dbyd#1#2{\frac{\partial #1}{\partial #2}}

\newcommand\spn{\operatorname{span}}
\newcommand\mcC{\mathcal{C}}
\newcommand\Pbconic{{P_{b, \conic}}}
\newcommand\Ha{\operatorname{Ha}}

\begin{document}
\title[Resolvent and spectral measure at low energy]{Resolvent at low energy III: the spectral measure}
\author{Colin Guillarmou}
\address{DMA, U.M.R. 8553 CNRS\\
Ecole Normale Sup\'erieure\\
45 rue d'Ulm\\ 
F 75230 Paris cedex 05 \\France}
\email{cguillar@dma.ens.fr}
\author{Andrew Hassell}
\address{Department of Mathematics, Australian National University \\ Canberra ACT 0200 \\ AUSTRALIA}
\email{Andrew.Hassell@anu.edu.au}
\author{Adam Sikora}
\address{Department of Mathematics, Australian National University \\ Canberra ACT 0200 \\ AUSTRALIA, and \ Department of Mathematics, Macquarie University \\ NSW 2109 \\ AUSTRALIA}
\email{sikora@mq.edu.au}
\thanks{This research was supported by Australian Research Council Discovery grants DP0771826  and DP1095448 (A.H., A.S.) and a Future Fellowship (A.H.). C.G. is partially supported by ANR grant ANR-09-JCJC-0099-01 and by the PICS-CNRS Progress in Geometric
Analysis and Applications, and thanks the math department of  ANU for 
its hospitality. C.G thanks M.Tohaneanu for useful discussions.}
\subjclass[2000]{35P25, 47A40, 58J50}
\keywords{Scattering metric, asymptotically conic manifold, resolvent kernel, spectral measure, low energy asymptotics, Price's law.}

\begin{abstract} Let $M^\circ$ be a complete noncompact manifold and $g$ an asymptotically conic Riemaniann metric on $M^\circ$, in the sense that  $M^\circ$ compactifies to a manifold with boundary $M$ in such a way that $g$ becomes a scattering metric on $M$. Let $\Delta$ be the positive Laplacian associated to $g$, and $P = \Delta + V$, where $V$ is a potential function obeying certain conditions. 
We analyze the asymptotics of the spectral measure $dE(\lambda) = (\lambda/\pi i) \big( R(\lambda+i0) - R(\lambda - i0) \big)$ of $P_+^{1/2}$, where $R(\lambda) = (P - \lambda^2)^{-1}$, as $\lambda \to 0$, in a manner similar to that done by the second author and Vasy in \cite{HV2}, and by the first two authors in \cite{GH1, GH2}. The main result is that the spectral measure  has a simple, `conormal-Legendrian'  singularity structure on a space which was introduced in \cite{GH1} and is obtained from $M^2 \times [0, \lambda_0)$ by blowing up a certain number of boundary faces. We use this to deduce results about the asymptotics of the wave solution operators $\cos(t \sqrt{P_+})$ and $\sin(t \sqrt{P_+})/\sqrt{P_+}$, and the Schr\"odinger  propagator $e^{itP_+}$, as $t \to \infty$. In particular, we prove the analogue of Price's law for odd-dimensional asymptotically conic manifolds. 

In future articles, this result on the spectral measure will be  used to (i) prove restriction and spectral multiplier estimates on asymptotically conic manifolds, and (ii)  prove long-time dispersion and Strichartz estimates for solutions of the Schr\"odinger equation on $M$, provided $M$ is nontrapping. 
\end{abstract}
\maketitle
\tableofcontents

\section{Introduction}
This paper continues the investigations carried out in \cite{HV1}, \cite{HV2}, \cite{HW}, \cite{GH1} and \cite{GH2} concerning the Schwartz kernel of the boundary values of the resolvent $(P - (\lambda \pm i0)^2)^{-1}$, where $P$ is the (positive) Laplacian $\Delta_g$ on an asymptotically conic manifold $(M^\circ,g)$, or more generally a Schr\"odinger operator $P = \Delta_g + V$ where $V$ is a suitable potential function. This was done for a fixed real $\lambda$ in \cite{HV1} and \cite{HV2} (and valid uniformly for $\lambda$ in a compact interval of $(0, \infty)$), for $\lambda \to \infty$ in \cite{HW} and for $\lambda = ik$, with $k$ real and tending to zero, that is, inside the resolvent set but approaching the bottom of the spectrum, $0$, in \cite{GH1} (and also in \cite{GH2}, where zero eigenvalues and zero-resonances were treated). Here we treat the case $\lambda$ real and tending to zero. 

One of the main reasons for doing this is to obtain results about the spectral measure, which can be expressed in terms of the difference between the outgoing and incoming resolvents $R(\lambda \pm i0)$, where $R(\sigma) = (P - \sigma^2)^{-1}$. 
Very complete results about the singularity structure of the spectral measure are known from \cite{HV1}, \cite{HV2} and \cite{HW} for $\lambda \in [\lambda_0, \infty)$ for any $\lambda_0 > 0$. To complete the picture we derive the asymptotics as $\lambda \to 0$ here. We can then, at least in principle, analyze the Schwartz kernel of any function of $P$ by integrating over the spectral measure. In the present paper we use our result on the low-energy asymptotics of the spectral measure to deduce long-time asymptotics of the wave and Schr\"odinger propagators determined by $P$. 
In future articles, we will 
treat two aspects of functional calculus for Laplace-type operators on an asymptotically conic manifold: (i) restriction estimates, that is $L^p \to L^{p'}$ estimates on the spectral measure, and $L^p \to L^p$ estimates for fairly general functions of the Laplacian, and (ii) 
long-time dispersion and Strichartz estimates for solutions of the Schr\"odinger equation on nontrapping asymptotically conic manifolds. 

\subsection{Geometric setting}\label{sect:Geometricsetting}
The geometric setting for our analysis is the same as in \cite{GH1}, which we now recall. Let $(M^\circ, g)$ be a complete  noncompact Riemannian manifold of dimension $n \geq 2$ with one end, 
diffeomorphic to $S\x(0,\infty)$ where $S$ is a smooth compact connected manifold without boundary. (The assumption that $S$ is connected is for simplicity of exposition; see Remark~\ref{manyends}.) 
We assume that $M^\circ$ admits a compactification $M$ to a smooth compact manifold with boundary,  with $\pl M=S$,
such that the metric $g$ becomes a \emph{scattering metric} or \emph{asymptotically conic metric} on $M$. That means that there
is a boundary defining function $x$ for $\pl M$ (i.e.\ $\pl M=\{x=0\}$ and $dx$ does not vanish on $\pl M$)  such that in a collar neighbourhood 
$[0,\eps)_x\x\pl M$ near $\pl M$, $g$ takes the form 
\begin{equation}\label{metricconic}
g=\frac{dx^2}{x^4}+\frac{h(x)}{x^2} = \frac{dx^2}{x^4}+\frac{\sum h_{ij}(x,y) dy^i dy^j}{x^2}
\end{equation}
where $h(x)$ is a smooth family of metrics on $S$. We then call $M^\circ$ an asymptotically conic manifold, or a scattering manifold. 
Notice that if $h(x) = h$ is independent of $x$ for small $x < x_0$, then setting $r = 1/x$ the metric reads 
$$
dr^2+ r^2 h(0), \qquad r > x_0^{-1},
$$
which is a conic metric; in this sense, the metric $g$ is asymptotically conic. For a general scattering metric taking the form \eqref{metricconic}, we view $r = 1/x$ as a generalized `radial coordinate', as the distance to any fixed point of $M$ is given by $r + O(1)$ as $r \to \infty$.
A metric cone itself is not an example of an asymptotically conic manifold, since cone points are not allowed, except in the case of Euclidean space, where the cone point is a removable singularity. 
In spite of this, the methods of this paper apply to metric cones, and in fact we analyze the resolvent kernel on a metric cone as an ingredient of our analysis on asymptotically conic manifolds.

We let $V$ be a real potential function on $M$ such that 
\begin{equation}\label{hyp2}
\begin{gathered}
V\in C^{\infty}(M), \quad V(x,y)=O(x^2) \textrm{ as }x\to 0,\\ 
\textrm{ with }\Delta_{\pl M}+\frac{(n-2)^2}{4}+V_0>0 \textrm{ on } L^2(\partial M, h(0)), \  \textrm{ where } V_0=(x^{-2}V)|_{\pl M}.
\end{gathered}
\end{equation}
Here, $\Delta_{\partial M}$ is the (positive) Laplacian with respect to the metric $h(0)$, we let $(\nu_j^2)_{j=0}^\infty$ 
be the set of increasing eigenvalues of $\Delta_{\pl M}+\frac{(n-2)^2}{4}+V_0$ and the condition in \eqref{hyp2} is that the lowest eigenvalue $\nu_0^2$  is \emph{strictly} positive. Notice that  $V_0 \equiv 0$ is not allowed if $n=2$, but is allowed for $n \geq 3$, and indeed then $V_0$ could be somewhat negative: for example, any negative constant greater than $(n/2 - 1)^2$. 
We shall further assume that 
\begin{equation}
\Delta_g + V \text{ has no zero eigenvalue or zero-resonance.}
\label{hyp3}\end{equation}
(Recall that a zero-resonance of $P$ is a solution $u$ to $Pu = 0$ where $u \notin L^2(M)$, but $u \to 0$ at infinity. Zero-resonances do not exist when $V \geq 0$.)

Let $P = \Delta_g + V$. 
Then $P$, with domain $H^2(M, dg)$, 
is self-adjoint on $L^2(M,dg)$ and a consequence of \eqref{hyp2} and \eqref{hyp3} is that its spectrum is the union of absolutely continuous spectrum on $[0, \infty)$ together with possibly a finite number of negative eigenvalues. It is known that, for $\lambda > 0$, the limits 
$$
R(\lambda \pm i0) := \lim_{\eta \downarrow 0} (P - (\lambda \pm i\eta)^2)^{-1}$$ exist  as bounded operators from $x^{1/2 + \eps} L^2(M, dg)$ to $x^{-(1/2 + \eps)} L^2(M, dg)$, for any $\epsilon > 0$ \cite{Yafaev}. The Schwartz kernels of these operators determines that of the spectral measure of $P_+^{1/2}$, where $P_+ = \indic_{(0, \infty)}(P) \circ P$ denotes the positive part of $P$,  according to Stone's formula
\begin{equation}
dE_{P_+}(\lambda) = \frac{\lambda}{\pi i} \Big( R(\lambda + i0) - R(\lambda - i0) \Big) \, d\lambda, \quad \lambda \geq 0. 
\label{Stone}\end{equation}

\subsection{Asymptotics}
We just mentioned above that, for any positive $\lambda$, the boundary values $R(\lambda \pm i0)$ of the resolvent exist as  bounded operators from $x^{1/2 + \eps} L^2(M, dg)$ to $x^{-(1/2 + \eps)} L^2(M, dg)$, for any $\epsilon > 0$. However, this is not true uniformly down to $\lambda = 0$ \cite{BH2}.  Indeed, the limit $\lambda \to 0$ of the resolvent kernel is a singular limit, which can be seen e.g. from explicit formulae for the resolvent kernel on flat Euclidean space. The outgoing resolvent kernel on $\RR^n$  has (modulo constant factors)  asymptotics
$$
\lambda^{(n-3)/2} e^{i\lambda |z-z'|} |z-z'|^{-(n-1)/2} + O(|z-z'|^{-(n+1)/2}), 
$$
for fixed $\lambda > 0$ and $|z-z'| \to \infty$, and 
$$
|z-z'|^{-(n-2)} + O(\lambda),
$$
for $\lambda \to 0$ and $z, z'$ fixed (provided $n \geq 3$). These asymptotics do not match, and there  is a transitional asymptotic regime in which we send $\lambda \to 0$ while holding $\lambda |z-z'|$ fixed. In the special case of $\RR^n$ the resolvent kernel is given by 
$$
\lambda^{n-2} (\lambda |z - z'|)^{(n-2)/2} \operatorname{Ha}^{1}_{(n-2)/2}(\lambda |z-z'|), \quad z, z' \in \RR^n,
$$
where $\operatorname{Ha}^{1}_{(n-2)/2}$ is the Hankel function of the first kind and order $(n-2)/2$. Thus in this case we can see explicitly the transitional asymptotic regime, interpolating between the oscillatory behaviour of the kernel for positive $\lambda$ and the polyhomogeneous behaviour at $\lambda = 0$. 

In this paper, following \cite{GH1} and more generally Melrose's program \cite{Kyoto}, we analyze the different asymptotic regimes of the resolvent kernel by working on a compactified and blown-up version, denoted\footnote{In this notation, $k$ stands for the parameter $\lambda$. We write $k$ rather than $\lambda$ in this notation to agree with the notation of \cite{GH1}, where the same space was used to construct $(\Delta_g + k^2)^{-1}$, $k \geq 0$.} $\MMksc$,  of the space 
\begin{equation}
M^\circ \times M^\circ \times (0, \lambda_0]
\label{naturaldomain}\end{equation}
 which is the natural domain of definition of the kernel $R(\lambda \pm i0)$ for $0 < \lambda \leq \lambda_0$. The idea is to realize asymptotic regimes geometrically so that each regime corresponds to a boundary hypersurface, and we consider the space \eqref{naturaldomain} to be ``sufficiently blown up" when the resolvent kernel lifts to be conormal at the lifted diagonal and either Legendrian, or polyhomogeneous conormal, at each boundary hypersurface. That means, in particular, that there is nothing ``hidden" at any of the corners, or in other words that if we have two intersecting hypersurfaces $H_1$ and $H_2$, that the expansion at $H_1 \cap H_2$ can be obtained by taking the expansion at $H_1$ and restricting the coefficients, which are functions on $H_1$,  to $H_1 \cap H_2$, or conversely by taking the  expansion at $H_2$ and restricting the coefficients to $H_2 \cap H_1$. In the example above, the expansions for $\lambda \to 0$ for fixed $z,z'$ and at $|z-z'| \to \infty$ for fixed $\lambda$ do not match, and this requires (in our approach) that the corner in between be blown up, to create a hypersurface on which the transitional asymptotics take place. 
 
\subsection{Main results and relation to previous literature}
Expansions of the resolvent as $\lambda \to 0$ were first considered by Jensen-Kato \cite{JK} for Schr\"odinger operators on $\RR^3$ and generalized by Murata \cite{Murata} to general dimension and general constant coefficient operators. More recently, there have been several studies by Wang \cite{XPW}, Bouclet \cite{Bouclet, Bou}, Bony-H\"afner \cite{BH1, BH2} and Vasy-Wunsch \cite{VW} on resolvent estimates (based on commutator estimates and Mourre theory) at low energy, for asymptotically Euclidean or asymptotically conic metrics. 
Wang's paper \cite{XPW} is particularly close in spirit to the current paper, and we discuss it further in Remark~\ref{Wang}.

To describe previous results from \cite{HV1}, \cite{HV2} and \cite{GH1}, we refer to figures \ref{mmkb} and \ref{fig:mmksc} which are  illustrations of the manifolds $\MMksc$ and $M^2_{k,b}$, two blown-up versions of $M \times M \times [0, \lambda_0]$. Here, $\lb$, $\rb$ and $\zf$ are respectively the boundary hypersurfaces of $M \times M \times  [0, \lambda_0]$ corresponding to $x = 0$, $x' = 0$ and $\lambda = 0$ (we use unprimed variables to refer to the left copy of $M$ and primed variables to refer to the right copy of $M$). The other hypersurfaces are created by blowup. Notice that $\zf, \lbo, \rbo$ and $\bfo$ are ``at $\lambda = 0$'', while the others are ``at positive $\lambda$''. 

In \cite{HV1}, \cite{HV2} the boundary value of the resolvent, $R(\lambda \pm i0)$, for fixed $\lambda > 0$, was shown to be the sum of a pseudodifferential operator (in the scattering calculus of Melrose \cite{scatmet}) and a Legendre distribution of a certain specific type, with respect to several Legendre submanifolds associated to the diagonal and to the geodesic flow on $M$, or more precisely to a limiting flow at `infinity'. In terms of the picture in Figure~\ref{fig:mmksc} this means that, on a fixed $\lambda > 0$ slice,  the kernel is oscillatory at the boundaries $\bfc, \lb, \rb$ and can be written as an oscillatory function or oscillatory integral with respect to phase functions determined by geodesic flow on $M$. 

On the other hand, in \cite{GH1} the resolvent $(P + k^2)^{-1}$ was analyzed for real $k \to 0$ on the space $\MMksc$. Because $k$ is in the resolvent set whenever $0 < k < k_1$ where $-k_1^2$ is the largest negative eigenvalue of $P$, the kernel of the resolvent has exponential decay away from the diagonal, and hence vanishes exponentially at the faces $\bfc, \lb$ and $\rb$. However, the rate of exponential decay vanishes as $k \to 0$ and, consequently, the kernel has nontrivial expansions at $\lbo, \rbo$ as well of course at $\zf$ and $\bfo$ (which meet the diagonal), and the focus of \cite{GH1} was the precise analysis of these (polyhomogeneous) expansions. 

The point of the current paper is to unify the two constructions. 
A precise statement of the result is given in Theorem~\ref{mainres}, after definitions of Legendre distributions on the space $\MMkb$ have been given. For now, let us say that a kernel is conormal-Legendrian on the space $\MMkb$  if it lies in the calculus of Legendre distributions given in Section~\ref{Leg}; roughly this means that it is oscillatory at the faces $\bfc, \lb, \rb$ and polyhomogeneous conormal at the other faces, on which $\lambda = 0$. 

\begin{theo}
The boundary value of the resolvent kernel, $R(\lambda \pm i0)$, is the sum of a pseudodifferential operator, i.e.\ a kernel on $\MMksc$, supported close to and conormal to the diagonal $\Delta_{k, sc}$, and a conormal-Legendrian on $\MMkb$. 
\end{theo}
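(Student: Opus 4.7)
The plan is to construct a parametrix $G(\lambda)$ for $P - (\lambda \pm i 0)^2$ directly on the blown-up space $\MMkb$ whose kernel has the claimed conormal-Legendrian structure, and then correct it to the true resolvent using a Fredholm/Neumann argument that stays inside the calculus.

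First I would assemble $G(\lambda)$ in four overlapping regions, using a partition of unity adapted to the structure of $\MMkb$. \textbf{(i)} Near the lifted diagonal $\Delta_{k,\sca} \subset \MMksc$, use the $k$-scattering pseudodifferential calculus (as in \cite{GH1}) to construct a symbolic inverse of $P - \lambda^2$ modulo a smoothing operator; this handles the diagonal conormal singularity. \textbf{(ii)} On the positive-$\lambda$ faces $\bfc, \lb, \rb$, away from the diagonal, propagate using the Legendrian parametrix construction of \cite{HV1, HV2}, which solves the transport equations along the Legendre submanifolds $L, L^\sharp$ associated to the geodesic flow; here the phase functions and amplitudes are chosen so that the kernel is oscillatory of the correct Legendrian type, and matches the diagonal parametrix symbolically where they overlap. \textbf{(iii)} On the zero-energy faces $\zf, \bfo, \lbo, \rbo$, use the polyhomogeneous conormal construction from \cite{GH1} for the resolvent at $\lambda = 0$, together with a formal power series expansion in $\lambda$ to match as $\lambda \downarrow 0$.

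The substantive work lies in \textbf{(iv)}, the gluing at the corners $\bfc \cap \bfo$, $\lb \cap \lbo$, $\rb \cap \rbo$, where Legendrian oscillatory behavior meets polyhomogeneous conormal behavior. The key technical observation is that the Legendrian phase $\lambda \phi$ on $\bfc$ degenerates to a function vanishing on $\bfo$, so the oscillatory kernel converts to a polyhomogeneous kernel at the corner, with the leading symbol restricted from the Legendrian to the face $\bfo$ matching the leading coefficient of the $\lambda \to 0$ expansion. This matching must be checked for both the principal and subprincipal symbols, and requires that the transport equations along the Legendre submanifolds have the correct indicial behavior at $\lbo, \rbo$ — which is verified by separating variables and using the eigenvalue data $(\nu_j^2)$ of $\Delta_{\pl M} + (n-2)^2/4 + V_0$ together with the resonance hypothesis \eqref{hyp3}. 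After this matching, a Borel-type summation yields a well-defined $G(\lambda)$ in the conormal-Legendrian class.

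Second, I would compute the error $E(\lambda) = (P - \lambda^2) G(\lambda) - \Id$ and show that, by construction, it vanishes to infinite order at all boundary faces except possibly the diagonal face $\bfc$, where its Legendrian symbol has been arranged to vanish; hence $E(\lambda)$ is a residual error lying in a small ideal of the calculus. Third, invert $\Id + E(\lambda)$ by Neumann series (or Fredholm inversion combined with the absence of zero-resonance from \eqref{hyp3}) and verify that the correction $G(\lambda)(\Id + E(\lambda))^{-1} - G(\lambda)$ remains in the conormal-Legendrian class, so that $R(\lambda \pm i 0) = G(\lambda)(\Id + E(\lambda))^{-1}$ has the stated form. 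The $\pm$ sign corresponds to a choice of outgoing vs.\ incoming Legendrian at $\bfc, \lb, \rb$.

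The hardest step will be \textbf{(iv)}, the compatibility of the Legendrian and polyhomogeneous structures at the corners joining the positive-$\lambda$ faces to the zero-energy faces, together with the verification that the error ideal is closed under composition so that the Neumann series converges within the calculus. This requires a detailed study of the symbol maps on the new blown-up space $\MMkb$ and is where most of the technical machinery of Sections~\ref{Leg} and beyond is needed.
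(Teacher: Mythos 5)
Your high-level architecture -- construct a parametrix $G(\lambda)$ piece by piece on $\MMkb$/$\MMksc$, match at corners, then correct via the identity $R = G(\Id+E)^{-1}$ -- is the right framework and essentially the one the paper uses. However, the proposal contains two substantive errors and one significant gap that would have to be repaired before it could become a proof.

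First, your step (iii), which treats $\bfo$ as if it were a zero-energy face handled by the polyhomogeneous machinery of \cite{GH1}, misses the central technical difficulty. On $\bfo$, the normal operator of $P-\lambda^2$ (after rescaling by $\lambda^{-2}$) is the exact conic Schr\"odinger operator $P_{\conic}-1$, acting at a \emph{positive} spectral parameter; its outgoing inverse $(P_{\conic}-(1+i0))^{-1}$ is genuinely oscillatory at its scattering end. One cannot produce it by a formal power-series expansion or by borrowing the polyhomogeneous resolvent $(P_{\conic}+1)^{-1}$ from \cite{GH1}, where the kernel decays exponentially. The entire Section~\ref{exactcone} of the paper is devoted to constructing $(P_{\conic}-(1+i0))^{-1}$ on the compactification $\ZZbsc$, and this construction --- which itself combines the scattering/Legendrian machinery of \cite{HV2} near $r=\infty$ with the b-calculus of \cite{APS} near the cone tip $r=0$, and requires a density argument plus a Hilbert-Schmidt iteration to control the correction term --- is what supplies the model at $\bfo$. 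Without that ingredient, the parametrix has no correct leading term at $\bfo$ and the matching at $\bfo\cap\bfc$ (your step (iv)) cannot even be formulated.

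Second, your claim that the error $E(\lambda)$ ``vanishes to infinite order at all boundary faces except possibly the diagonal face $\bfc$, where its Legendrian symbol has been arranged to vanish'' is wrong in a way that invalidates the final inversion step. As always in resolvent parametrix constructions, one applies the operator from the \emph{left}, which lets one solve the error away to infinite order at $\lb$ and at $\bfc$ (also to varying finite orders at $\zf$, $\bfo$, $\lbo$, $\rbo$, since the operator degenerates there), but at $\rb$ one is stuck with an error of finite order $(n-1)/2$ that is genuinely oscillatory ($e^{i/\rho'}$ times a polyhomogeneous conormal kernel). It is precisely because the error is \emph{not} residual at $\rb$ that the inversion of $\Id+E$ is not a soft Neumann-series argument in a residual ideal. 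The paper instead shows $E$ is Hilbert-Schmidt on $x^{l}L^2$ with norm $\to 0$ as $\lambda\to 0$, writes $S=(\Id+E)^{-1}-\Id$ using the identity $S=\sum_{j=1}^{2N}(-1)^jE^j+E^NSE^N$, and proves $S$ is polyhomogeneous by a delicate conormality argument combining the composition calculus (via the pushforward theorem) with Hilbert--Schmidt bounds and the auxiliary differential operator $Q_N$ to force decay at $\rb$. This is the hardest step of the paper, and your proposal glides over it. Relatedly, you do not address why $GS$ stays in the Legendre class: the key mechanism is that the Legendrian oscillation of $G$ at $\bfc$ and $\rb$ is killed upon composition with $S$ because $S$ is rapidly vanishing at $\lb$ and $\bfc$ (i.e.\ as the shared middle variable goes to infinity), so that the triple-space product becomes polyhomogeneous and the pushforward theorem applies.
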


We determine the structure of the spectral measure by subtracting the incoming from the outgoing resolvent. There are two different cancellations that occur when we do this. First the singularity along the diagonal disappears (not surprisingly, since the spectral measure solves an elliptic equation) and secondly there is cancellation in the asymptotic expansion for fixed $z, z' \in M^\circ \times M^\circ$ as $\lambda$ goes to zero. The second cancellation is quite important in applications, such as in understanding the decay of the heat kernel or propagator for long time. 

\begin{theo}\label{mainth} 
The kernel of the spectral projection \eqref{Stone} is conormal-Legendrian on $\MMkb$, and vanishes to order $2\nu_0 + 1$ as $\lambda \to 0$ with $z, z' \in M^\circ$ fixed, where $\nu_0^2$ is the lowest eigenvalue of the operator \eqref{hyp2}. In particular, if $V = 0$ (or even if just $V_0 = 0$), the spectral projection vanishes to order $n-1$ as $\lambda \to 0$ with $z, z' \in M^\circ$ fixed.
More precisely, there is a nontrivial solution $w$ to $Pw = 0$, with $w = O(x^{n/2 - 1 -\nu_0})$ as $x \to 0$, such that the expansion at $\la=0$ is given by
\[dE(\la)= \Big(\la^{2\nu_0+1} w(z) w(z') |dg dg'|^{1/2} + O(\la^{\min(2\nu_0+2,2\nu_1+1)}) \Big)d\la.\]
where $\nu_1^2>\nu_0^2$ is the second eigenvalue of the operator \eqref{hyp2}. Moreover, if $V$ is identically zero, then $w$ is constant. 
\end{theo}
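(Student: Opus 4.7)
The plan is to combine Stone's formula \eqref{Stone} with the decomposition of $R(\lambda \pm i0)$ given by the preceding theorem and to exploit cancellations between outgoing and incoming resolvents. Writing
$$dE(\lambda) = \frac{\lambda}{\pi i}\bigl(R(\lambda+i0) - R(\lambda-i0)\bigr)\, d\lambda,$$
the preceding theorem splits each boundary value into a pseudodifferential part, conormal at the diagonal $\Delta_{k,\sca}$, and a conormal-Legendrian piece on $\MMkb$. For $\lambda > 0$, $P-\lambda^2$ is elliptic and self-adjoint, so the two boundary values share the same real-valued principal diagonal singularity; one can arrange the pseudodifferential representatives to be real, so they cancel in the difference. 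The remaining Legendrian contributions combine (the incoming and outgoing Legendre submanifolds are related by orientation reversal) to give a conormal-Legendrian distribution on $\MMkb$, and multiplying by $\lambda$ shifts only the index at $\zf$. This establishes the first assertion.

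For the vanishing order and leading asymptotic at $\lambda=0$ with $z,z'\in M^\circ$ fixed, I would analyze the expansion of the conormal-Legendrian kernel at the face $\zf$, where the regime ``$\lambda\to 0$, $z,z'$ fixed'' is resolved. From the zero-energy Green's function construction in \cite{GH1}, the expansion of $R(\lambda+i0)$ at $\zf$ is polyhomogeneous with exponents determined by the indicial roots $(n-2)/2 \pm \nu_j$ of $P$ at $\pl M$. Real indicial terms (integer powers and the decaying indicial exponents, produced by iterating the resolvent identity against $R(0)$) appear identically in $R(\lambda\pm i0)$ and thus cancel. The first exponent producing a genuinely imaginary contribution is $\lambda^{2\nu_0}$, generated by the outgoing radiation boundary condition associated to the smallest indicial root $\nu_0$. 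Combined with the Stone prefactor $\lambda$, this yields vanishing of exact order $2\nu_0+1$.

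To identify the leading coefficient, I would match the $\zf$ expansion of $R(\lambda+i0)-R(\lambda-i0)$ with its expansions at the adjacent faces $\bfo$, $\lbo$, $\rbo$. Compatibility at $\zf\cap\bfo$ (and its left/right analogues) forces the coefficient at order $\lambda^{2\nu_0}$ to lie in the $\nu_0$-eigenspace of $\Delta_{\pl M}+(n-2)^2/4+V_0$ in each variable, and to factor as a product of two indicial solutions of $Pw=0$ with leading behaviour $x^{(n-2)/2-\nu_0}$. Hypothesis \eqref{hyp3} guarantees such a solution $w$ is unique up to scale, producing the claimed form $\lambda^{2\nu_0+1}\, w(z)\, w(z')\, |dg\, dg'|^{1/2}$. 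The constant factor is pinned down by reduction to the exact model problem on the metric cone over $(\pl M, h(0))$, where the outgoing resolvent is given explicitly in terms of Hankel functions $H^{(1)}_{\nu_j}$ and whose imaginary part at $\lambda=0$ is computable term by term. The remainder is $O(\lambda^{2\nu_0+2})$ from the next correction within the $\nu_0$-mode, or $O(\lambda^{2\nu_1+1})$ from the first appearance of the $\nu_1$-mode, whichever is smaller. If $V\equiv 0$ then $\nu_0 = (n-2)/2$ and the $\nu_0$-eigenfunction is constant, so $w$ can be chosen constant.

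The main obstacle will be the careful bookkeeping of the expansion at $\zf$: verifying that every exponent strictly less than $2\nu_0$ appears in a real polyhomogeneous term cancelled by Stone's difference, and that the rank-one factored form $w(z)w(z')$ of the first non-cancelling coefficient is forced by the matching condition at $\zf\cap\bfo$. This amounts to analyzing the normal operator of $P-\lambda^2$ at $\bfo$ restricted to the lowest angular eigenspace and checking compatibility with the parametrix of \cite{GH1}.
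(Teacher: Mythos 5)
Your proposal follows essentially the same route as the paper: apply Stone's formula to the resolvent decomposition, cancel the diagonal singularity, then analyze the imaginary part of the resolvent in its expansion at $\zf$, matching with the conic model at $\bfo$ to extract the leading behaviour. However, there is a genuine gap at the key technical step. You assert that ``every exponent strictly less than $2\nu_0$ appears in a real polyhomogeneous term cancelled by Stone's difference,'' attributing this to the reality of ``indicial terms $\dots$ produced by iterating the resolvent identity against $R(0)$,'' but this is precisely the thing that must be \emph{proved}, and the proof is nontrivial. The resolvent is not simply $R(0)$ plus a Neumann series in $\lambda^2$; it is $G(\Id+E)^{-1}$ where $G$ is a parametrix built from genuinely complex pieces (the conic resolvent at $\bfo$, the Hankel-function models at $\lbo,\rbo$, and the Legendrian pieces). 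The paper's argument (Proposition~\ref{cancellation}) first carefully constructs $G$ so that (i) its $\zf$-expansion is real up to order $\lambda^{2\nu_0}$ and (ii) the first nonreal term $\lambda^{2\nu_0}A(z,z')$ can be arranged to satisfy $PA=0$; the point of (ii) is that then $\mathrm{Im}(E) = \mathrm{Im}((P-\lambda^2)G)$ actually begins at order $\min(2\nu_0+2,2\nu_1)$, not $2\nu_0$. Only then does one run an inductive index-set computation (via the composition formulae \eqref{Ejformulae}) to check that the reality survives taking powers $E^j$, the Neumann series $S=\sum(-1)^jE^j$, and finally the composition $GS$. Terms at $\lbo$ and $\rbo$ begin to contaminate the $\zf$-expansion at order $2\nu_0+2$, which is why they do not destroy the reality. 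Without carrying out this bookkeeping, the claim that the true resolvent's $\zf$-expansion is real below $2\nu_0$ is not established.

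A secondary, smaller imprecision: you attribute the uniqueness-up-to-scale of $w$ purely to \eqref{hyp3}, but this is only half the story. Hypothesis~\eqref{hyp3} rules out $L^2_b$-nullvectors of $P_b$ (hence rules out additional solutions with decay $O(x^{\nu_0})$ or better), while simplicity of the bottom eigenvalue $\nu_0^2$ of $\Delta_{\partial M}+V_0+(n/2-1)^2$ (which in the paper follows from connectedness of $\partial M$, see Remark~\ref{manyends}) pins down the one-dimensionality of the indicial data at weight $-\nu_0$. Both facts are needed to conclude that the coefficient $A$ factors as a rank-one $\tilde w(z)\tilde w(z')$; the factorization would fail if $\partial M$ were disconnected, exactly as noted in Remark~\ref{manyends}. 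Also, the rank-one structure is not ``forced by matching at $\zf\cap\bfo$'' alone --- the matching only determines the asymptotic of $A$ near the boundary of $\zf$; you additionally need that $A$ solves $P_bA=0$ (inherited from $(P-\lambda^2)dE=0$) to propagate the factorization into the interior of $\zf$.
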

 
 See Theorem~\ref{mainsm} for a more precise statement of this result. 
 
We now give two corollaries of Theorem~\ref{mainth} concerning the long-time behaviour of the wave and Schr\"odinger kernels associated to $P$. 
We write $P_+$ for $\indic_{(0, \infty)}(P) \circ P$. 

 \begin{cor}\label{cor:waves} 
 Let $P = \Delta_g + V$ be as above, and 
 let $\chi \in C_c^\infty(\RR)$, with $\chi(t) \equiv 1$ for $t$ near $0$.
 Let $w$ and $\nu_0, \nu_1$ be as in Theorem~\ref{mainth}. 
Then the 
 solution operators for the wave equation, localized to low energy, satisfy as $t \to \infty$
\begin{equation}
\begin{gathered}
\indic_{(0, \infty)}(P) \chi(P) \frac{\sin(t\sqrt{P_+})}{\sqrt{P_+}}(z,z') = - \Gamma(2\nu_0 + 1) \cos(\pi(\nu_0 + 1)) t^{-(2\nu_0 + 1)} w(z) w(z') \\ + O(t^{-\min(2\nu_0 + 2,2\nu_1+1) }) ,\\
 \indic_{(0, \infty)}(P)   \chi(P) \cos (t \sqrt{P_+})(z,z') = \Gamma(2\nu_0 + 2) \cos(\pi(\nu_0 + 1))  t^{-(2\nu_0+2)}w(z)w(z')\\ + O(t^{-\min(2\nu_0+3,2\nu_1+2)}) .
\end{gathered}\label{waveexp1}
\end{equation}
Notice that the coefficient $ \cos(\pi(\nu_0 + 1))$ vanishes 
when $2(\nu_0+1)$ is an odd integer. 
In particular if $\pl M=S^{n-1}$ and $V_0=0$, then waves decay to order $t^{-(n-1)}$ if $n$ is even and $O(t^{-n})$ is $n$ is odd.  
The implied constant in the remainders  are uniform on compact subsets of $M^\circ \times M^\circ$. Moreover, if $(M,g)$ is nontrapping, then we can remove the energy cutoff $\chi(P)$: the Schwartz kernels of $\indic_{(0, \infty)}(P) \sin(t\sqrt{P_+})/\sqrt{P_+}$ and $\indic_{(0, \infty)}(P) \cos(t \sqrt{P_+})$ are given by the right hand side of \eqref{waveexp1}. \end{cor}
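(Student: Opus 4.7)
The plan is to apply the spectral theorem to express the wave operators as integrals against the spectral measure $dE(\lambda)$ and then to insert the expansion provided by Theorem~\ref{mainth}. Writing $\chi_1 = \indic_{(0, \infty)}(P) \chi(P)$, Stone's formula gives
\[
\chi_1 \frac{\sin(t\sqrt{P_+})}{\sqrt{P_+}}(z,z') = \int_0^\infty \chi(\lambda^2) \frac{\sin(t\lambda)}{\lambda}\, dE(\lambda)(z,z'),
\]
and an analogous formula for $\chi_1 \cos(t\sqrt{P_+})$. For $(z,z')$ in a fixed compact set of $M^\circ \times M^\circ$ we stay away from the faces $\bfc, \lb, \rb$, so by Theorem~\ref{mainth} the density $\lambda \mapsto dE(\lambda)(z,z')/d\lambda$ is polyhomogeneous near $\lambda=0$, with leading term $\lambda^{2\nu_0+1} w(z)w(z')$ and remainder $O(\lambda^{\min(2\nu_0+2,2\nu_1+1)})$, and is smooth for $\lambda$ in the interior of $\operatorname{supp}\chi$.

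Next I would reduce to explicit Mellin-type integrals. Using the standard identities
\[
\int_0^\infty \sin(t\lambda)\, \lambda^{s}\, d\lambda = \Gamma(s+1) \sin\!\Big(\frac{\pi(s+1)}{2}\Big) t^{-(s+1)},
\quad
\int_0^\infty \cos(t\lambda)\, \lambda^{s}\, d\lambda = \Gamma(s+1) \cos\!\Big(\frac{\pi(s+1)}{2}\Big) t^{-(s+1)},
\]
valid in the distributional sense, together with $\sin(\pi\nu_0+\pi/2)=\cos(\pi\nu_0)=-\cos(\pi(\nu_0+1))$ and $\cos(\pi(\nu_0+1))=\cos(\pi(\nu_0+1))$, the leading term $\lambda^{2\nu_0+1} w(z)w(z')$ of $dE(\lambda)$ contributes exactly the stated main terms in \eqref{waveexp1}. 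The cutoff factor $\chi(\lambda^2)$ is harmless: the difference between $\chi(\lambda^2)$ and $1$ is supported away from $\lambda = 0$, and integration by parts in $\lambda$ then gives a rapidly decaying remainder in $t$, smaller than any negative power. The remainder term $O(\lambda^{\min(2\nu_0+2,2\nu_1+1)})$ of the spectral measure, after one more integration by parts in $\lambda$ (using polyhomogeneity to control the $\lambda$-derivatives), yields the error of order $t^{-\min(2\nu_0+2,2\nu_1+1)}$ for the sine kernel and $t^{-\min(2\nu_0+3,2\nu_1+2)}$ for the cosine kernel, matching the statement.

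To pass from the localized kernels $\chi_1 \sin(t\sqrt{P_+})/\sqrt{P_+}$ and $\chi_1 \cos(t\sqrt{P_+})$ to the full kernels in the nontrapping case, I would show that the contribution from $(1 - \chi(\lambda^2))\, dE(\lambda)$, i.e.\ the intermediate- and high-energy parts, gives a rapidly decaying-in-$t$ contribution when $(z,z')$ is in a fixed compact set of $M^\circ\times M^\circ$. On $[\lambda_0, \infty)$, the Hassell--Vasy and Hassell--Wunsch microlocal description of the spectral measure \cite{HV1, HV2, HW}, combined with the nontrapping hypothesis (which gives uniform resolvent bounds and a Legendrian structure with smooth phases away from the diagonal), allows arbitrarily many integrations by parts in $\lambda$ in the oscillatory integrals $\int \sin(t\lambda) \lambda^{-1}\, dE(\lambda)$ and $\int \cos(t\lambda)\, dE(\lambda)$. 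The non-stationarity of $t\lambda$ relative to the geodesic phase, uniformly on compacta of $M^\circ \times M^\circ$ for large $t$, then forces rapid decay.

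The main obstacle is the last step: controlling the high-energy contribution uniformly in $(z,z')$ on compacta and removing the cutoff $\chi$. For this one needs to invoke and carefully combine the Legendrian calculus of \cite{HV1, HV2, HW} at positive and large $\lambda$ with the low-energy calculus developed in this paper, verifying compatibility at the joining region $\lambda\sim \lambda_0$; the low-energy part itself is comparatively routine once the spectral-measure expansion of Theorem~\ref{mainth} is in hand.
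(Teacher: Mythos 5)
Your proposal is correct and follows essentially the same route as the paper: expand the wave kernels via Stone's formula against $dE_{P_+^{1/2}}(\lambda)$, insert the polyhomogeneous expansion from Theorem~\ref{mainth}, and evaluate the leading term with the classical asymptotics of $\int_0^\infty \chi(\lambda) e^{\pm i t\lambda}\lambda^{s}\,d\lambda$ (the paper cites \cite[Example 7.1.17]{Ho}), absorbing the cutoff and remainder by repeated integration by parts in $\lambda$. The treatment of the nontrapping case by citing the high-energy Legendrian structure from \cite{HV1,HV2,HW} likewise matches the paper's remark.
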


\begin{remark}
This result is closely related to Price's law, which is the statement that waves on a Schwarzschild spacetime, starting with localized initial data, decay to order $t^{-3}$ (outside the event horizon) as $t \to \infty$. This $t^{-3}$ decay was predicted in \cite{Pr1,Pr2} and has been proved recently by  Donninger-Schlag-Soffer \cite{DSS,DSS1} for exact Schwarzschild using separation of variables and by Tataru \cite{Ta} for more general settings. 
Although our result does not apply directly to the Schwarzschild case, it does apply to asymptotically flat manifolds 
which are isometric to Schwarzschild near infinity, or more generally to asymptotically conic manifolds with a `gravitational' type metric at infinity, that is, of the form near $x=0$
\begin{equation}
(1 - 2Mx) \frac{dx^2}{x^4} + \frac{h(x)}{x^2}.
\label{grav}\end{equation}
The case $M \neq 0$ requires a minor extension to the analysis of Section~\ref{lerc} given in \cite[Section 5]{HV2}.\footnote{Note that, in \cite[Section 5]{HV2}, for a metric of the form \eqref{grav}, we have $q_l = q_r = 2M\lambda^2$, so the imaginary powers that show up have an exponent $i\alpha$ with $\alpha = O(\lambda)$ vanishing at $\bfo$ and $\zf$.} 
\end{remark}

The corresponding result for the Schr\"odinger propagator $e^{itP_+}$ is as follows:

 \begin{cor}\label{propagators} 
The Schwartz kernel of the propagator $e^{itP_+}$, localized to low energy, satisfies
\begin{equation}
\indic_{(0, \infty)}(P)  \chi(P) e^{itP_+}(z,z') =  C t^{-(\nu_0 + 1)} w(z) w(z') + O(t^{-\min(\nu_0 + 3/2,\nu_1+1) }) , \quad t \geq 1,
  \label{propexp}
\end{equation}
for some $C\not=0$.  The implied constant in the remainder term is uniform on compact subsets of $M^\circ \times M^\circ$. Moreover, if $(M,g)$ is nontrapping, then we can remove the energy cutoff $\chi(P)$: the Schwartz kernel of $\indic_{(0, \infty)}(P) e^{itP_+}$ is given by the right hand side of \eqref{propexp}.
 \end{cor}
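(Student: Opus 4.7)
The approach mirrors the proof of Corollary~\ref{cor:waves} with the wave phase replaced by the Schr\"odinger phase $e^{it\lambda^2}$. By Stone's formula and the functional calculus,
\[
\indic_{(0,\infty)}(P)\chi(P)\,e^{itP_+}(z,z') = \int_0^\infty e^{it\lambda^2}\,\chi(\lambda^2)\, dE_{P_+^{1/2}}(\lambda)(z,z').
\]
The plan is to split the integrand using the expansion from Theorem~\ref{mainth} (more precisely, its refined form Theorem~\ref{mainsm}) and evaluate the resulting oscillatory integrals.

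After the change of variable $u = \lambda^2$, so that $2\lambda\,d\lambda = du$, the main term becomes
\[
\tfrac{1}{2}\, w(z)w(z') \int_0^\infty e^{itu}\,\chi(u)\,u^{\nu_0}\,du,
\]
and the remainder becomes an integral against a conormal symbol of order $\min(\nu_0+\tfrac{1}{2},\nu_1)$ at $u=0$. I would then apply the standard oscillatory integral asymptotic
\[
\int_0^\infty e^{itu}\chi(u)u^\alpha\,du = e^{i\pi(\alpha+1)/2}\Gamma(\alpha+1)\,t^{-(\alpha+1)} + O\bigl(t^{-(\alpha+2)}\bigr),\qquad t\to\infty,
\]
which is obtained by Abel regularization plus contour rotation for the model integral $\int_0^\infty e^{itu}u^\alpha\,du$, with the error coming from integration by parts against the smooth function $\chi(u)-1$ supported away from $0$. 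Applying this with $\alpha = \nu_0$ produces the leading term with the nonzero constant $C = \tfrac{1}{2}e^{i\pi(\nu_0+1)/2}\Gamma(\nu_0+1)$, and applying it to the remainder gives the error $O(t^{-\min(\nu_0+3/2,\nu_1+1)})$. Uniformity on compact subsets of $M^\circ\times M^\circ$ is automatic from the polyhomogeneous conormal structure of $dE$ at $\zf$, which ensures that the expansion coefficients are smooth in interior $(z,z')$ and that derivatives of the remainder in $u$ are controlled uniformly.

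To remove the cutoff $\chi(P)$ in the nontrapping case, one must show that the complementary piece
\[
\int_0^\infty e^{it\lambda^2}\bigl(1-\chi(\lambda^2)\bigr)\, dE_{P_+^{1/2}}(\lambda)(z,z')
\]
decays faster than any polynomial in $t$, uniformly on compact subsets of $M^\circ\times M^\circ$. The integrand is supported in $\lambda$ bounded away from $0$, and by \cite{HV1,HV2,HW} the spectral measure on this energy interval is, under nontrapping, the sum of a pseudodifferential contribution on the diagonal and a Legendre distribution associated to the bicharacteristic flow. The main technical obstacle is performing non-stationary phase against the Legendre part: since the $\lambda$-derivative of the Schr\"odinger phase is $2t\lambda$ and the Legendre phases are independent of $t$, for $(z,z')$ in a fixed compact subset of $M^\circ\times M^\circ$ the Legendre phases remain bounded, so the combined phase has large $\lambda$-derivative and repeated integration by parts in $\lambda$ yields $O(t^{-N})$ decay for every $N$. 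Combining the low-energy and the rapidly-decaying high-energy pieces gives the stated asymptotic.
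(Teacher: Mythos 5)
Your proof matches the paper's essentially verbatim: both integrate $e^{it\lambda^2}\chi(\lambda^2)$ against the expansion of $dE_{P_+^{1/2}}(\lambda)$ at $\zf$ from Theorem~\ref{mainsm}, with the substitution $u=\lambda^2$ reducing the leading term to the standard model integral $\int_0^\infty e^{itu}\chi(u)u^{\nu_0}du$, and remove the cutoff under nontrapping by non-stationary phase in $\lambda$ (using \cite{HW}, since $\partial_\lambda(t\lambda^2)=2t\lambda$ grows with $t$ while the Legendrian phases stay bounded for $(z,z')$ compact). One cosmetic slip: the correction from $\chi(u)-1$, being smooth and supported away from $u=0$, contributes $O(t^{-\infty})$ rather than $O(t^{-(\alpha+2)})$, though this does not change anything since the remainder $O(t^{-\min(\nu_0+3/2,\nu_1+1)})$ from the subleading spectral-measure terms dominates either way.
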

 
\begin{remark} 
Indeed, using results of \cite{HW}, if the metric $g$ is nontrapping, then the propagator localized away from low energy satisfies
$$
\big| (\Id - \chi(P)) e^{itP_+}(z,z') \big| = O(t^{-\infty}), \quad t \to \infty. 
$$
Intuitively this is because when $z, z'$ are fixed and $t \to \infty$, then no signal starting at $z$ can end at $z'$ when there is a lower bound on the velocity. The same is true with $e^{itP_+}$ replaced by either of the wave solution operators. 
We also remark that, instead of localizing the variables $(z,z')$ in compact sets, we could work instead in appropriately weighted Sobolev spaces. 
\end{remark}

\begin{remark}\label{Wang} X. P. Wang's paper \cite{XPW} is quite close in spirit to the present paper. He also studies manifolds with (exactly) conical ends, and derives low energy asymptotics for the resolvent, as well as large time expansions for the propagator similar to the Corollaries above. In fact, his results are more general than ours in some respects, as he treats higher order asymptotic terms, and also allows zero modes and zero resonances. On the other hand, our results are more complete in that we consider expansions at all boundary hypersurfaces of $\MMksc$, while Wang only considers (in our terminology) expansions at the $\zf$ boundary hypersurface. Our expansions are also more explicit: for example, it does not seem easy to see from \cite{XPW} that the leading asymptotic for the propagator is a rank one operator (under our assumptions), as in \eqref{propexp}.  
\end{remark}

 Corollaries~\ref{cor:waves} and \ref{propagators} only use the expansion of the spectral measure at the $\zf$ face of $\MMkb$. 
In the sequel, \cite{GHS2}, to this paper we shall prove the following 
consequences of Theorem~\ref{mainth} that exploit the full regularity of the spectral measure, in particular its Legendrian nature at the ``positive $\lambda$" boundary hypersurfaces:  
 
 \begin{itemize}
  \item
For any $\lambda_0 > 0$ there exists a constant $C$ such that the generalized spectral projections $dE(\lambda)$
for $\sqrt{\Delta}$ satisfy
\begin{equation}
\| dE(\lambda) \|_{L^p(M) \to L^{p'}(M)} \leq C \lambda^{n(1/p - 1/p') - 1}, \quad 0 \leq \lambda \leq \lambda_0
\label{restr}\end{equation}
for $1 \leq p \leq 2(n+1)/(n+3)$. Moreover, if $(M,g)$ is nontrapping, then there exists $C$ such that \eqref{restr} holds for all $\lambda > 0$ and the same range of $p$. 

\item
Assume that $F\in C_c(0, T)$   and that for some $s>\max\{n(1/p-1/2),1/2\}$
$$
\|F\|_{H^s}< \infty,
$$
where $H^s$ is a Sobolev space of order $s$. Then there exists $C$ depending only on $T$, $p$ and $s$  such that
\begin{equation}
\|F(\sqrt{\Delta})\|_{p\to p} \leq  C\|F\|_{H^s}  .
\label{specmult}\end{equation}
Moreover, if $(M,g)$ is nontrapping, then \eqref{specmult} can be improved to 
\begin{equation*}
\sup_{t > 0} \|F(t\sqrt{\Delta})\|_{p\to p} \leq  C\|F\|_{H^s}  .
\end{equation*}
\end{itemize}


\section{Geometric Preliminaries}
\subsection{The spaces $\MMkb$ and $\MMksc$}\label{5.1}

The construction of the Schwartz kernel of $(\Delta-\la^2)^{-1}$ takes place on a desingularized version of 
the manifold $[0,1]\x M\x M$ where $[0,1]$ is the range of the spectral parameter $\la$. In geometric terms, this 
corresponds to an iterated sequence of blow-up of corners of $[0,1]\x M\x M$; it was introduced in \cite{MS} and heavily
used in \cite{GH1}. For the convenience of the reader we recall quickly its definition but we refer to Section 2.2 in \cite{GH1} for 
a detailed description of this manifold. 
We denote by $[X;Y_1,\dots,Y_N]$ the iterated real blow-up of $X$ around 
$N$ submanifolds $Y_i$ if $Y_1$ is a p-submanifold, the lift of $Y_2$ to $[X; Y_1]$ is a p-submanifold, and so on. 
We shall denote by $\rho_H$ an arbitrary boundary defining function for a boundary hypersurface $H$ of $X$. 
We now define the space $\MMksc$. Consider in $[0,1]\x M\x M$ the codimension 3 corner $C_3:=\{0\}\x\pl M\x\pl M$ 
and the codimension $2$ corners 
\[C_{2,L}:=\{0\}\x\pl M\x  M, \quad C_{2,R}:=\{0\}\x M\x\pl M,\quad C_{2,C}:=[0,1]\x\pl M\x\pl M.\]
We consider first the blow-up 
\[M_{k,b}^2:=\big[[0,1]\x M\x M; C_3, C_{2,R},C_{2,L},C_{2,C}\big]\]
with blow-down map $\beta_b:M^2_{k,b}\to [0,1]\x M\x M$. 
We have $7$ faces on $M^2_{k,b}$, the right, left, and zero faces
\[\rb=\mathrm{clos }\beta_b^{-1}([0,1]\x M\x\pl M),\quad 
\lb:=\mathrm{clos }\beta_b^{-1}([0,1]\x\pl M\x M),\] 
\[\zf:=\mathrm{clos }\beta_b^{-1}(\{0\}\x M\x M),\]
the `b-face' (so-called because of its use in the b-calculus) $\bfc:=\mathrm{clos }\beta_b^{-1}(C_{2,C}\setminus C_3)$, and the three faces corresponding to $\bfc, \rb, \lb$ at zero energy:
\[\bfo:=\beta_b^{-1}(C_3), \quad 
\rbo:=\mathrm{clos }\beta_b^{-1}(C_{2,R}\setminus C_3), \quad 
\lbo:=\mathrm{clos }\beta_b^{-1}(C_{2,L}\setminus C_3).\]
\begin{figure}[ht!]
\begin{center}
\input{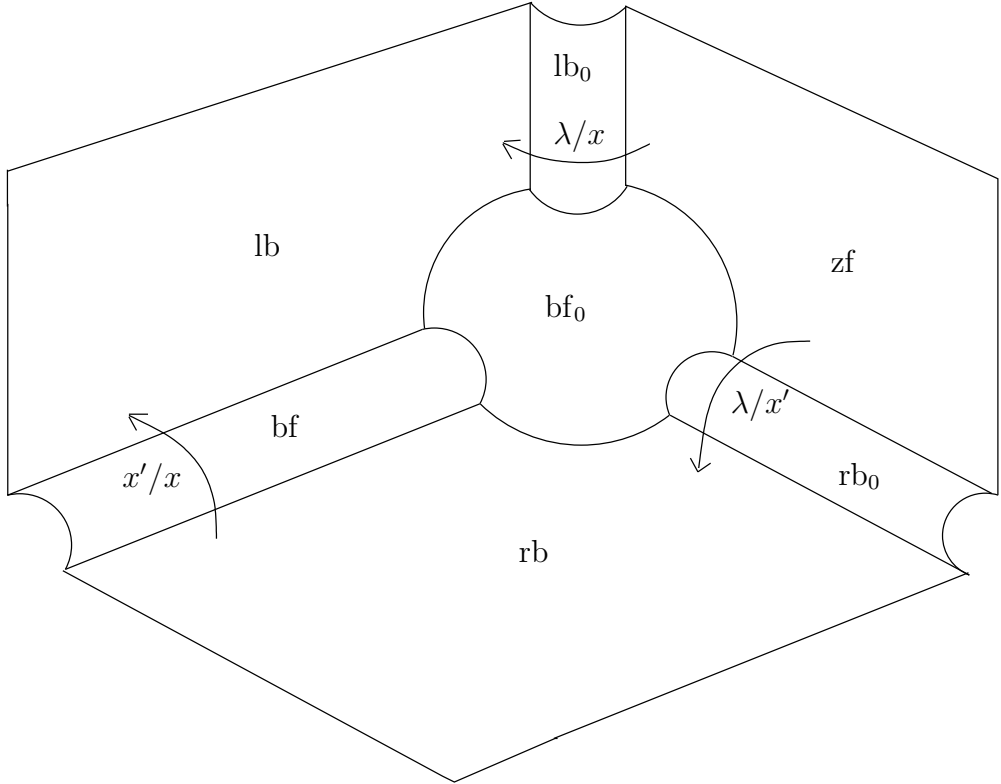}
\caption{The manifold $M^2_{k,b}$. The arrows show the direction in which the indicated function increases from $0$ to $\infty$.}
\label{mmkb}
\end{center}
\end{figure}
The closed lifted diagonal $\Delta_{k,b}=\mathrm{clos }\beta_b^{-1}([0,1]\x \{(m,m);m\in M^\circ\})$
intersects the face $\bfc$ in a p-submanifold denoted $\pl_{\bfc}\Delta_{k,b}$. We then define
the final blow-up
\begin{equation}
M^2_{k,\sca}:=\big[M^2_{k,b}; \pl_{\bfc}\Delta_{k,b}\big],
\label{scblowup}\end{equation}
and denote the new boundary hypersurface created by this blowup $\sca$, for `scattering face'. 

\begin{figure}[ht!]
\begin{center}
\input{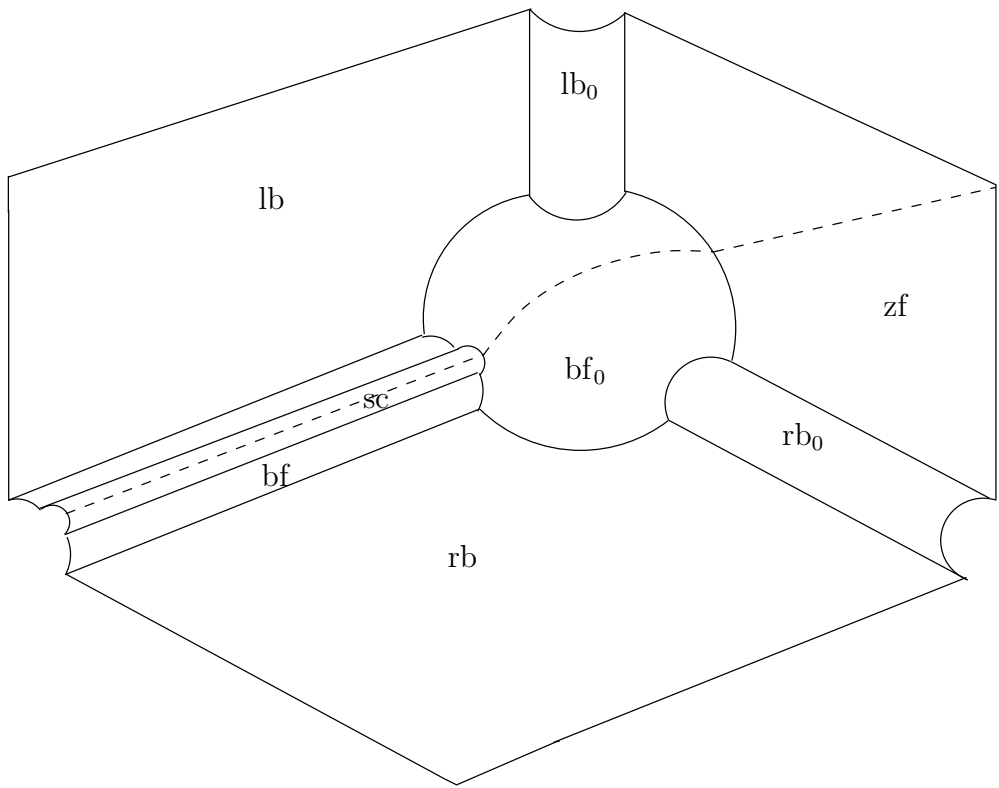}
\caption{The manifold $M^2_{k,\sca}$; the dashed line is the boundary of the lifted diagonal $\Delta_{k, \sca}$}
\label{fig:mmksc}
\end{center}
\end{figure}

\subsection{Polyhomogeneous conormal functions and index sets}\label{sect:phg}
Below we use spaces of polyhomogeneous conormal functions. These are
defined on any manifold with corners $X$. Let  $\mc{F}$ denote its set of boundary hypersurfaces. An  \emph{index family} $\mathcal{E}$ consists of a subset $\mathcal{E}_H$ of $\CC \times \NN$ (an \emph{index set}) for each $H$ in the set  $\mc{F}$ of boundary hypersurfaces of $X$, satisfying two conditions: (a) for each $K \in \RR$, the number of points $(\beta, j) \in \mc{E}_H$ with $\Re \beta \leq K$ is finite and (b)  if $(\beta, j) \in \mc{E}_H$ then $(\beta + 1, j)\in \mc{E}_H $ and if $j > 0$ then also $(\beta, j-1) \in \mc{E}_H$. Then the space of polyhomogeneous conormal functions with index family $\mathcal{E}$, denoted $\mathcal{A}_\mcE(X)$, is the space of functions $f$ that are smooth in the interior of $X$ and possess expansions in powers and logarithms of the form 
$$
f = \sum_{{z,p} \in \mc{E}_H \text{ s.t. } \Re z \leq s} a_{(z,p)} \rho_H^z (\log \rho_H)^p +O(\rho_H^s)
$$
where $\rho_H$ is a boundary defining function for the boundary hypersurface $H$. See \cite{GH1} or \cite{cocdmc} for a precise definition. Condition (a) on the index set ensures that the sum on the left hand side is finite, and condition (b) ensures that the form of the sum is independent of the choice of local coordinates. 

Let us recall from \cite{cocdmc} the operations of addition and extended union on two index sets $E_1$ and $E_2$, denoted $E_1 + E_2$ and $E_1 \extunion E_2$ respectively:
\begin{equation}\begin{gathered}
E_1 + E_2 = \{ (\beta_1 + \beta_2, j_1 + j_2) \mid (\beta_1, j_1) \in E_1 \text{ and } (\beta_2 , j_2) \in E_2 \} \\
E_1 \extunion E_2 = E_1 \cup E_2 \cup \{ (\beta, j) \mid \exists (\beta, j_1) \in E_1, (\beta, j_2) \in E_2 \text{ with } j = j_1 + j_2 + 1 \}.
\end{gathered}\end{equation}

We write $q$ for the index set 
\begin{equation}
\{ (q + n, 0) \mid n = 0, 1, 2, \dots \}
\label{short}\end{equation} for any $q \in \RR$; note that in this notation, $0$ denotes the $\CI$ index set $\NN = \{ (0,0), (1,0), (2,0), \dots \}$. 
For any index set $E$ and $q \in \RR$, we write $E \geq q$ if $\Re \beta \geq q$ for all $(\beta, j) \in E$ and if $(\beta, j) \in E$ and $\Re \beta = q$ implies $j = 0$. We write $E > q$ if there exists $\epsilon > 0$ so that $E \geq q + \epsilon$. We shall say that $E$ is integral if $(\beta, j) \in E$ implies that $\beta \in \mathbb{Z}$, and one-step if $E$ is such that $E = E' + (\alpha,0)$ for some $\alpha \in \CC$ and some integral index set $E'$. We write 
$$\min E = \min \{ \beta \mid \exists \ (\beta, j) \in E \}.
$$
We  say that $E'$ is a logarithmic extension of $E$ if $E \subset E'$ and if $(\beta, j) \in E'$ implies that $(\beta, 0) \in E$.

\subsection{Compressed cotangent bundle}\label{sect:compcotbundle}
We define a `compressed tangent bundle' and a `compressed cotangent bundle' on $\MMkb$. To define the compressed tangent bundle, denoted $\Tkb \MMkb$, it is helpful first to define the single space version. Thus we define the single space $\Mkb$ to be 
$$
\Mkb = \big[ M \times [0, \lambda_0]; \partial M \times \{ 0 \} \big],
$$
that is, $M \times [0, \lambda_0]$ with the corner $\partial M \times \{ 0 \}$ blown up. (We always ignore the boundary at $\lambda = \lambda_0$.) We denote the boundary hypersurfaces of $\Mkb$ by $\bfc$, the `b-face', the lift of $\partial M \times [0, \lambda_0]$; $\zf$, the `zero face', the lift of $M \times \{ 0 \}$, and  $\ff$, the `front face', created by the blowup. 

Recall that the space of scattering vector fields on $M$, denoted $\Vsc(M)$, are those of the form $x W$, where $x$ is a boundary defining function for $\partial M$ and $W$ is tangent to the boundary (i.e.\ is a b-vector field, $W \in \Vb(M)$). Let $\rho_{\ff}$ denote a boundary defining function for $\ff \subset \Mkb$. 
We define the space (and Lie algebra) $\mathcal{V}_{k,b}(\Mkb)$ to be those smooth vector fields 
generated by $\rho_{\ff}^{-1}$ times the lift of $\Vsc(M)$ to $\Mkb$, together with the vector field $\lambda \partial_\lambda + W$, where $W \in \Vb(M)$ is equal to $ x \partial_x$ near $\partial M$. Thus near $\bfc \cap \ff$, using coordinates\footnote{We use $\rho$ to denote $x/\lambda$, and $r = \lambda/x$ throughout this paper.} $(\rho = x/\lambda, y, \lambda)$, such vector fields are smooth $\CI(\Mkb)$-linear combinations of 
\begin{equation}
\rho^2 \partial_\rho,\quad \rho \partial_{y_i}, \quad \lambda \partial_\lambda \Big|_{\rho, y},
\end{equation}
while near $\ff \cap \zf$, using coordinates $(x, y, r = \lambda/x)$, 
such vector fields are smooth $\CI(\Mkb)$-linear combinations of 
\begin{equation}
x \partial_x \Big|_{r, y}, \quad \partial_{y_i}, \quad r \partial_{r}  
\end{equation}
which in particular restrict to $\zf$ to give the b-vector fields on $M$. 
This definition is independent of coordinates. These vector fields define a bundle $\Tkb \Mkb$, whose space of smooth sections is precisely $\mathcal{V}_{k,b}(\Mkb)$. 

We observe some geometric properties of $\Mkb$ and $\Tkb \Mkb$:
\begin{itemize}
\item
To see why these vector fields are chosen, note that both $H$ and $\lambda^2$ vanish to second order at $\ff$, in terms of $\Tkb\Mkb$. 
It is natural, then, to divide the operator by a factor of $\rho_{\ff}^2$; 
we find that $\rho_{\ff}^{-2}(H - \lambda^2)$, which we can take to be $\lambda^{-2} H - 1$ near $\bfc$ and $x^{-2} H - (\lambda/x)^2$ near $\zf$, is `built' out of an elliptic combination of sections of $\Tkb\Mkb$.  Moreover, for Euclidean space, we have
$[\lambda\partial_\lambda + x\partial_x, \Delta - \lambda^2 ] = 2 (\Delta - \lambda^2)$, which shows that the resolvent and spectral measure are (in the Euclidean case) homogeneous with respect to this vector field. 
\item A scattering metric \eqref{metricconic} on $M$ blows up at $\ff$ to second order. If we multiply by $\lambda^2$ and restrict to $\ff$, then it is easy to check that we get the \emph{exact} conic metric 
\begin{equation}
\frac{d\rho^2}{\rho^4} + \frac{h(0)}{\rho^2}.
\label{exactconicmetric}\end{equation}
Thus a scattering metric on $M$ induces an exact conic structure on $\ff$. 

\item At $\zf$, we have the Lie algebra of b-vector fields on $M$, but for a fixed positive $\lambda$, the vector fields tangent to $M \times \{ \lambda \}$ are the \emph{scattering} vector fields. Thus this Lie algebra interpolates between the b-calculus at $\lambda = 0$ and the scattering calculus for positive $\lambda$. This Lie Algebra was `microlocalized' to a calculus of operators in \cite{GH1}, with kernels defined on $\MMksc$. This calculus interpolates between the b-calculus at $\lambda = 0$ and the scattering calculus for fixed positive $\lambda$. 
\end{itemize}

Now to define $\Tkb \MMkb$, we note that there are stretched projections $\pi_L, \pi_R : \MMkb \to \Mkb$; this can be proved 
by noting that $\MMkb$ can be constructed from $\Mkb \times M$
by blowing up $\ff \times \partial M$, $\ff \times M$ and $\bfc \times \partial M$ using Lemma 2.7 of \cite{asatet}. We define $\Tkb \MMkb$ to be that vector bundle generated over $\CI(\MMkb)$ by 
\begin{equation}
\pi_L^*(\rho_{\ff}^{-1} \Vsc(M)), \quad \pi_R^*(\rho_{\ff}^{-1} \Vsc(M)) \text{ and }\lambda \partial_\lambda + x \partial_x + x' \partial_{x'}. 
\label{VkbMMkb}\end{equation}

It is straightforward to check that the $\CI(\MMkb)$-span of these vector fields is closed under Lie bracket. 
We denote this Lie Algebra by $\Vkb$. 

We now define the compressed cotangent bundle $\Tkbstar \MMkb$. This is the dual bundle to $\Tkb \MMkb$. Near $\bfc$ and $\bfo$, but away from $\zf$, a basis of sections is given by singular one-forms of the form 
\begin{equation}
\frac{d\rho}{\rho^2}, \quad \frac{d\rho'}{{\rho'}^2}, \quad \frac{ dy_i}{\rho}, \quad \frac{dy'_i}{\rho'}, \quad \frac{d\lambda}{\lambda}
\label{Tkbstar-basis}\end{equation}
where primed variables are coordinates on the right factor of $M$, and unprimed variables on the left factor of $M$ (lifted to $\MMkb$);
near $\bfo$ and $\zf$, but away from $\bfc$, a basis is given by 
\begin{equation}
\frac{dx}{x}, \quad \frac{dx'}{{x'}}, \quad {dy_i}, \quad dy'_i, \quad \frac{d\lambda}{\lambda};
\end{equation}
and near $\zf$, and away from other boundary hypersurfaces, 
\begin{equation}
dz_i , \quad dz'_i, \quad \frac{d\lambda}{\lambda},
\end{equation}
where $z = (z_1, \dots, z_n)$ are local coordinates on the interior of $M$. 
Therefore, any point in $\Tkbstar \MMkb$ can be written 
\begin{equation}
\nu \frac{d\rho}{\rho^2}  +\nu' \frac{d\rho'}{{\rho'}^2}+ \mu_i \frac{dy_i}{\rho} + \mu'_i \frac{dy'_i}{\rho'} + T \frac{d\lambda}{\lambda}
\label{T}\end{equation}
in the first region, 
$$
\tau \frac{dx}{x} + \tau' \frac{dx'}{{x'}} + \eta_i {dy_i} + \eta'_i dy'_i +  T' \frac{d\lambda}{\lambda}
$$
in the second region, and 
$$
\zeta_i dz_i + \zeta'_i z'_i + T'' \frac{d\lambda}{\lambda}
$$
in the third region. These expressions define local coordinates on $\Tkbstar \MMkb$ in each region.

\begin{remark} Here the coordinates $\mu, \nu,  \mu', \nu'$ have a different meaning to that used in \cite{HV2}, due to the scaling in $\lambda$, since for example here $\mu_i$ is dual to $\lambda dy_i/x$ rather than $dy_i/x$. It is similar to how in the semiclassical calculus, the variable $\eta$ is dual to $dy/h = \lambda dy$ rather than $dy$, so the frequency corresponding to $\eta$ scales as $\lambda$. However, here we have the opposite situation in that $\lambda \to 0$, rather than infinity, so in a sense we are giving a meaning to the `semiclassical calculus with $h \to \infty$'!
\end{remark}

\subsection{Densities}\label{sect:densities}
We define the compressed density bundle $\Omegakb(\MMkb)$ to be that line bundle whose smooth nonzero sections are given by the wedge product of a basis of sections for $\Tkbstar(\MMkb)$. For example, near $\bfc \cap \bfo \cap \Diagb$ this takes the form (using \eqref{Tkbstar-basis})
\begin{equation}
\Big| \frac{d\rho d\rho' dy dy' d\lambda}{\rho^{n+1} {\rho'}^{n+1} \lambda} \Big| \sim \lambda^{2n} \Big| \frac{dg dg' d\lambda}{\lambda} \Big|
\end{equation}
where $dg$, resp. $dg'$ denotes the Riemannian density with respect to $g$, lifted to $\MMkb$ by the left, resp. right projection; 
near $\zf$, a smooth nonzero section takes the form 
\begin{equation}
\Big| \frac{dx dx' dy dy' d\lambda}{x x' \lambda} \Big| \sim \Big| \frac{dg_b dg_b' d\lambda}{\lambda} \Big|
\end{equation}
where $dg_b$ is the Riemannian density with respect to the b-metric $$g_b = x^2 g$$ on $M$; near $\lb \cap \lbo$, a smooth nonzero section takes the form 
\begin{equation}
\Big| \frac{d\rho dy dx' dy' d\lambda}{\rho^{n+1} x' \lambda} \Big| \sim \lambda^n \Big| \frac{dg dg_b' d\lambda}{\lambda} \Big|,
\label{lblbo}\end{equation}
and so on.

\begin{remark} This differs from the density bundle used in \cite{GH1}. The density bundle defined here is more convenient; for example, it absorbs the $\rho_{\sca}^{n/2}$ factors put in `by hand' in Definition 2.7 of \cite{GH1}.
\end{remark}

\subsection{Fibrations and contact structures}\label{facs}
The Lie algebra $\Vkb$ gives rise to a fibration at each  boundary hypersurface $\bullet$ of $\MMkb$, in the following way: the leaves of this fibration at $\bullet$ are precisely the maximal submanifolds on which the restriction of $\Vkb$ to $\bullet$ is transitive. 
These fibrations are trivial (and therefore unimportant) on $\bfo, \lbo, \rbo$ and $\zf$, i.e.\ the Lie algebra is transitive on these faces. We now describe the fibration at the remaining boundary hypersurfaces, namely $\bfc$, $\lb$ and $\rb$. 

At $\bfc$, the Lie Algebra restricted to this face is given by multiples of $\lambda \partial_\lambda$, and therefore, the fibration is given by projection off the $\lambda$ factor. That is, in local coordinates $(y, y', \sigma, \lambda)$ on this face, where\footnote{We use the notation $\sigma = x/x' = \rho/\rho'$ throughout the paper} $\sigma = x/x'$, the fibration takes the form 
$$
(y, y', \sigma, \lambda) \mapsto (y, y', \sigma).
$$
Thus $Z_{\bfc}$ is the base of this fibration, i.e.\ $Z_{\bfc}$ is the lift of the corner $(\partial M)^2$ to $M^2_b = [M^2; (\partial M)^2]$ (which can be identified with $\bfc \cap \bfo$ in Figure~\ref{mmkb}).  

At $\lb$, the Lie algebra restricts to the span of vector fields $\la\pl_\la,\pl_{z'}$. Hence the fibration takes the form 
$$
(y, z', \lambda) \mapsto y.
$$
Similarly at $\rb$  the fibration takes the form 
$$
(z, y', \lambda) \mapsto y'.
$$
We let $Z_{\lb} = Z_{\rb} = \partial M$ denote the base of these fibrations. 

In the interior of $\MMkb$, the compressed cotangent bundle is canonically isomorphic to the usual cotangent bundle, and hence the canonical symplectic form on $T^*(\MMkb)$ induces a canonical symplectic form $\omega$ on $\Tkbstar \MMkb$. In turn, $\omega$ induces a contact structure at each boundary hypersurface $\bullet$, where $\bullet = \bfc, \lb$, or $\rb$. In fact, the contact structure lives on a bundle over $Z_{\bullet}$, denoted $\Nsfstar Z_\bullet$ defined in \cite{HV1}, which we recall here. Here and below we use $\bullet$ to denote one of $\bfc, \lb$ or $\rb$. 

There is a subbundle of $\Tkb_\bullet \MMkb$ whose fibre at $p \in \bullet$ consists of the span of those vector fields that vanish (as an element of $T_p \MMkb$) at $p$. (This is not the trivial subbundle, since a vector field can vanish as an element of $T_p \MMkb$ while being nonzero as an element of $\Tkb_p \MMkb$; for example, $x^2 \partial_x$ is nonzero as an element of $\Tkb \MMkb$ at $\lb$.)
Dually, we define the annihilator subbundle
$\Tkbstar(F; \bullet)\MMkb$, a subbundle of $\Tkbstar_\bullet \MMkb$. Here the $F$ stands for `fibre'. The quotient
bundle, $\Tkbstar_\bullet \MMkb / \Tkbstar(F; \bullet)\MMkb$, turns out  to be the lift of a bundle $\Nsfstar Z_\bullet$ over $Z_\bullet$  to $\bullet$ \cite{HW}. For example, at $\bullet = \bfc$, the vector fields vanishing as an element of $T_p \MMkb$, $p \in \bfc$, are spanned by all but the last vector field in \eqref{VkbMMkb}, the annihilator subbundle is spanned by $d\lambda/\lambda$, and the quotient bundle is spanned by the  remaining elements of \eqref{Tkbstar-basis}. Thus, local coordinates on $\Nscstar Z_\bfc$ are $(y, y', \sigma; \nu, \mu, \nu', \mu')$. Similarly, for $\bullet = \lb$, the annihilator subbundle is spanned by $\lambda dz'_i$ and $d\lambda/\lambda$, and the fibres of $\Nscstar Z_{\lb}$ are spanned by 
$$
\frac{d\rho}{\rho^2} \, , \ \frac{dy_j}{\rho} \, , \ 1 \leq j \leq n-1,
$$
showing that $(y, \nu, \mu)$ furnish local coordinates on $\Nscstar Z_{\lb}$. 

Note that $Z_\bfc$ is a manifold with boundary; we denote its two boundary hypersurfaces by $\partial_{\lb} Z_\bfc$ and $\partial_{\rb} Z_\bfc$. Similarly, 
$\Nsfstar Z_\bfc$ is a manifold with boundary, with boundary hypersurfaces $\partial_{\lb} \Nsfstar Z_\bfc$ and $\partial_{\rb} \Nsfstar Z_\bfc$. There is a fibration $\phi_{\bfc, \lb}$ from $\partial_{\lb} Z_\bfc$ to $Z_{\lb}$ given in local coordinates by $(y, y') \mapsto y$. Similarly there is an induced fibration $\phit_{\bfc, \lb}$ from $\partial_{\lb} \Nsfstar Z_\bfc$ to $\Nsfstar Z_{\lb}$ given in local coordinates by $(y, y', \nu, \mu, \nu', \mu') \mapsto (y, \nu, \mu)$. There are of course analogous fibrations at the right boundary $\rb$. 

Next we show how $\omega$ induces a contact structure on $\Nsfstar Z_\bullet$. Contracting $\omega$ with $\rho_{\bullet}^2 \partial_{\rho_{\bullet}}$, where $\rho_{\bullet}$ denotes a boundary defining function for $\bullet$, and restricting to $\bullet$ gives a one-form on $\Tkbstar_{\bullet} \MMkb$ that is well-defined up to scalar multiples. It can be checked that it induces a form on $\Nsfstar Z_{\bullet}$ that is nondegenerate in the sense of contact geometry (at least in the interior of $\Nsfstar Z_{\bullet}$), and therefore determines a well-defined contact structure on $\Nsfstar Z_{\bullet}$. 

In the case $\bullet = \bfc$, we compute that $\omega$ is given by 
\begin{equation}\begin{gathered}
\omega = d \Big( \nu \frac{d\rho}{\rho^2}  +\nu' \frac{d\rho'}{{\rho'}^2}+ \mu_i \frac{dy_i}{\rho} + \mu'_i \frac{dy'_i}{\rho'} + T \frac{d\lambda}{\lambda} \Big) \\
= d\nu \wedge \frac{d\rho}{\rho^2}  +d\nu' \wedge\frac{d\rho'}{{\rho'}^2}
+ d\mu_i \wedge\frac{\lambda dy_i}{\rho} - \mu_i dy_i \wedge\frac{d\rho}{\rho^2} \\ +  \ 
d \mu'_i \wedge\frac{\lambda dy'_i}{\rho'} - \mu'_i dy'_i \wedge\frac{d\rho'}{{\rho'}^2}
 + dT \wedge\frac{d\lambda}{\lambda}.
\end{gathered}\end{equation}
We can use $\rho \partial_\rho + \rho' \partial_{\rho'}$ (where these are lifted from the left and right factors respectively) as a b-normal vector field at $\bfc$, and then using $\rho$ as a boundary defining function we obtain 
$$
\iota_{\rho(\rho \partial_\rho + \rho' \partial_{\rho'})}  \omega = 
\mu_i dy_i -d\nu + \sigma (\mu_i' dy_i'  - d\nu')
$$
as the contact 1-form on $\Nsfstar Z_{\bfc}$. This is precisely the same 
contact form that one gets from the manifold $\MMkb$ at a fixed energy level, as was done in \cite{HV2}. 

This contact 1-form degenerates at $\partial_{\lb} \Nsfstar Z_\bfc$: the contact form becomes degenerate in the fibre directions of $\phit_{\bfc, \bullet}$ but remains nondegerate in the `base' directions. In fact, over $\partial_{\lb} \Nsfstar Z_{\bfc}$, the contact 1-form is the lift of the contact 1-form on $\Nsfstar Z_{\lb}$ (given in local coordinates by $\mu_i dy_i - d\nu$) with respect to this fibration. Moreover, the fibres of $\phit_{\bfc, \lb}$ have a natural contact structure given in local coordinates by $\mu'_i dy'_i - d\nu'$. Of course, similar statements are true at the intersection with $\rb$. 
This is all explained in more detail in \cite{HV1} and \cite{HW}.


\section{Legendre distributions on the space $\MMkb$}\label{Leg}

\subsection{Legendre submanifolds}\label{sec:legsub}
We recall from \cite{HV1} definitions concerning Legendre submanifolds of $\Nsfstar Z_{\bfc}$. Let $n = \dim M$, so that $\dim \Nsfstar Z_{\bfc} = 4n-1$. We define a Legendre submanifold $\Lambda$ of $\Nsfstar Z_{\bfc}$ to be a smooth submanifold of dimension $2n-1$ such that
\begin{itemize}
\item the contact form vanishes on $\Lambda$.

\item $\Lambda$ is transversal to  $\partial_{\lb} \Nsfstar Z_\bfc$ and $\partial_{\rb} \Nsfstar Z_\bfc$, and therefore is a smooth manifold with boundary. The boundary hypersurfaces $\Lambda \cap \partial_{\lb} \Nsfstar Z_\bfc$ and $\Lambda \cap \partial_{\rb} \Nsfstar Z_\bfc$ will be denoted $\partial_{\lb} \Lambda$, respectively $\partial_{\rb} \Lambda$.
\end{itemize}

\begin{remark} As shown in \cite[Section 4]{HW}, this definition is equivalent to the more elaborate definition given in \cite{HV1}, \cite{HV2}.
\end{remark}

We also recall two definitions concerning two Legendre submanifolds that intersect. The first applies away from $\lb$ and $\rb$. 
Suppose that 
$\Lambda_0$ and $\Lambda$ are two smooth  Legendre submanifolds that intersect cleanly in a submanifold of dimension $2n-2$ (disjoint from $\partial_{\lb} \Nsfstar Z_\bfc$ and $\partial_{\rb} \Nsfstar Z_\bfc$). In that case, each submanifold divides the other into two parts. Let $\Lambda_+$ and $\Lambda_-$ denote the two pieces of $\Lambda_1$. Then both $(\Lambda_0, \Lambda_+)$ and $(\Lambda_0, \Lambda_-)$ are said to form an intersecting pair of Legendre submanifolds. 

The second definition concerns two Legendre submanifolds $\Lambda^\sharp$ and $\Lambda$ where $\Lambda^\sharp$ is smooth, and $\Lambda$ is smooth except at $\Lambda^\sharp$ where it has a conic singularity. We say that $(\Lambda, \Lambda^\sharp)$ form an intersecting pair of Legendre submanifolds with conic points if
\begin{itemize}
\item $\Lambda^\sharp$ projects diffeomorphically to the base $\bfc$ and does not meet the zero section of $\Nsfstar Z_\bfc$ (so that  $\spn \Lambda^\sharp = \{ tq \mid q \in \Lambda^\sharp, t \in \RR \}$ is a submanifold of dimension $2n$);

\item the lift $\hat \Lambda$ of $\Lambda$ to the blown-up manifold
\begin{equation}
\big[ \Nsfstar Z_{\bfc}; \ \spn \Lambda^\sharp \big]
\label{lambdah}\end{equation}
is a smooth submanifold that meets the boundary hypersurfaces of \eqref{lambdah} (in particular, the lift of $\spn \Lambda^\sharp$) transversally.

\end{itemize}
All this is explained in more detail in \cite{HV1} and \cite{HW}. 

In subsequent sections of this paper there are three Legendre submanifolds of particular interest: the boundary (at $\bfc$) of the conormal bundle to the diagonal $\Diagb$, which following \cite{HV1}, \cite{HV2} we denote $\Nscstar \Diagb$; the `propagating Legendrian' $L^{\bfc}$; and the incoming/outgoing Legendrian $L^\sharp_\pm$. We now define and describe these three submanifolds.

The conormal to the diagonal $\Nscstar \Diagb$ is easy to describe: it is the Legendre submanifold of $\Nsfstar Z_{\bfc}$ given in local coordinates $(\sigma, y, y', \nu, \nu', \mu, \mu')$ by $\sigma = 1, y=y', \nu = -\nu', \mu = -\mu'$. Analytically, it is related to pseudodifferential operators on $\MMkb$ of (differential) order $-\infty$: kernels on $\MMksc$ that are smooth and rapidly vanishing at $\bfc$ are conormal to $\Nsfstar \Diagb$ when viewed on $\MMkb$. 

The incoming/outgoing Legendrian $L^\sharp_\pm$ is also easy to describe in local coordinates: it is given by $\nu = \nu' = \pm 1, \mu = \mu' = 0$. It is clear that this projects diffeomorphically to the base $Z_{\bfc}$. Analytically this corresponds to pure incoming/outgoing oscillations $a(y,y',\sigma) e^{\pm i\lambda/x} e^{\pm i\lambda/x'} = a(y,y',\sigma) e^{\pm i/\rho} e^{\pm i/\rho'}$. We write $L^\sharp = L^\sharp_+ \cup L^\sharp_-$.

The propagating Legendrian $L^{\bfc}$ is more interesting and geometrically intricate. It is related to the limit at $\partial M$ of geodesic flow on $M$ or, what is the same thing, the Hamilton flow of the symbol of $P - \lambda^2$ at $\bfc$. Since this occurs purely at $x=x' = 0$ (where the potential vanishes) and the metric $g$ is asymptotically conic, it is related to geodesic flow on an exact conic metric. 

One way to describe $L^{\bfc}$ is to start with the intersection of $\Nscstar \Diagb$ and the characteristic variety of $P - \lambda^2$, which is the submanifold $\{ \sigma = 1, y=y', \nu = -\nu', \mu = -\mu'; \nu^2 + |\mu|^2_h = 1 \}$, and take the flowout by the (rescaled) Hamilton vector field associated to the operator $P - \lambda^2$ acting on either the left or the right variables. The Hamilton vector field for this operator vanishes to first order at $\Tkbstar_{\bfc} \MMkb$; after dividing by $\rho$, we obtain a nonzero vector field on $\Tkbstar_{\bfc} \MMkb$ that descends to a contact vector field on $\Nsfstar Z_{\bfc}$.  In local coordinates, the symbol of $P - \lambda^2$ acting on the left is 
\begin{equation}
\sigma_l(P - \lambda^2) = \nu^2 + h - \lambda^2, \quad h = \sum_{i,j} h^{ij}(y) \mu_i \mu_j,
\label{sigmal}\end{equation}
and  the left Hamilton vector field takes the form (after dividing by $\rho$)
\begin{equation}
V_l = -\nu (\sigma \dbyd{}{\sigma} + \mu \dbyd{}{\mu}) + h \dbyd{}{\nu} + \dbyd{h}{\mu_i}\dbyd{}{y_i} - \dbyd{h}{y_i}\dbyd{}{\mu_i},
\label{Vl}\end{equation}
while the symbol of $P - \lambda^2$ acting on the right is 
\begin{equation}
\sigma_r(P - \lambda^2) = {\nu'}^2 + h' - \lambda^2, \quad h' = \sum_{i,j} h^{ij}(y') \mu'_i \mu'_j,
\label{sigmar}\end{equation}
and
 the right Hamilton vector field takes the form (after dividing by ${\rho'}$)

\begin{equation}
V_r = \nu' (\sigma \dbyd{}{\sigma} - \mu \dbyd{}{\mu}) + h' \dbyd{}{\nu'} + \dbyd{h'}{\mu'_i}\dbyd{}{y'_i} - \dbyd{h'}{y'_i}\dbyd{}{\mu'_i}.
\label{Vr}\end{equation}
Let $L^{\bfc}_\pm$ denote the flowout in the positive, resp. negative directions by the vector field $V_l$ from $\Nscstar \Diagb \cap \{ \sigma_l(P - \lambda^2)= 0 \}$. 
In \cite{HV1} it was proved

\begin{prop}
(i) Locally near $\Nscstar \Diagb$, the pairs $(\Nscstar \Diagb, L^{\bfc}_\pm)$ form an intersecting pair of Legendre submanifolds.

(ii) Locally near $L^\sharp_\pm$, the pair $(L^\bfc_\pm, L^\sharp_\pm)$ forms a pair of intersecting Legendre submanifolds with conic points. 
\end{prop}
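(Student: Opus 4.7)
My plan is to verify the proposition by direct contact-geometric computation, exploiting the fact that the principal symbol $\sigma_l(P - \lambda^2) = \nu^2 + h$ and its rescaled Hamilton vector field $V_l$ at the face $\bfc$ depend only on the exact conic metric \eqref{exactconicmetric} induced on the front face $\ff$, not on the potential $V$ (which vanishes to second order at $\pl M$) nor on $\lambda$ itself. In particular the contact-geometric structure near $\bfc$ is essentially the same as in a single fixed-energy slice, and the analogous statement from \cite{HV1} applies once the pieces are identified; I outline the verification.

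For part (i), the key dimension count is as follows. $\Nscstar\Diagb$ is the Legendre submanifold of dimension $2n-1$ cut out by the $2n$ equations $\sigma = 1$, $y = y'$, $\nu + \nu' = 0$, $\mu + \mu' = 0$. The characteristic variety $\Char = \{\sigma_l = 0\}$ has codimension one and meets $\Nscstar\Diagb$ transversely, since $d\sigma_l = 2\nu\,d\nu + dh$ restricted to $T\Nscstar\Diagb$ is nonzero, so the intersection is an isotropic submanifold of dimension $2n-2$. The vector field $V_l$ of \eqref{Vl} is transverse to $\Nscstar\Diagb$ on this intersection: its $\partial_\sigma$-component is $-\nu\sigma = -\nu$, which is nonzero wherever $\nu \neq 0$ (a locus of lower dimension that one excludes by restricting to a neighbourhood of $L^\sharp_\pm$), while $\sigma$ is itself a defining function of $\Nscstar\Diagb$. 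Hence the one-sided flowouts $L^\bfc_\pm$ are smooth $(2n-1)$-dimensional submanifolds that cleanly intersect $\Nscstar\Diagb$ in the $(2n-2)$-dimensional starting submanifold. The contact form vanishes on $L^\bfc_\pm$ because it vanishes on $\Nscstar\Diagb$ (hence on the starting submanifold) and because $V_l$, being the rescaled contact vector field generated by $\sigma_l$, preserves the contact distribution restricted to $\Char$.

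For part (ii), I study the long-time behaviour of the $V_l$-integral curves in $L^\bfc_\pm$. Using $\dot\nu = h \geq 0$ together with the conservation law $\nu^2 + h = 1$, $\nu$ is monotone and tends to $\pm 1$ as $t \to \pm\infty$, while $\mu \to 0$; the integral curve accumulates onto a fixed point of the flow that lies in $L^\sharp_\pm$. To verify the smooth lift of $L^\bfc_\pm$ to $\big[\Nsfstar Z_\bfc; \spn L^\sharp_\pm\big]$ and transversality at the new front face, I linearise $V_l$ at such a fixed point: the linearisation is hyperbolic with integer eigenvalues arising from the quadratic dependence of $h$ on $\mu$ and from the $-\nu\sigma\partial_\sigma$ term, so Sternberg's theorem provides a smooth (in fact polyhomogeneous) linearisation. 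The stable manifold of the fixed point, once its asymptotic parameter is converted via $s = e^{-t}$ into a genuine radial coordinate $r \sim s$ measuring distance to $\spn L^\sharp_\pm$, is precisely $\hat L^\bfc_\pm$: it becomes smooth in the projective coordinates on the blowup and meets the front face transversely, which is the required conic intersection structure.

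The principal obstacle is exactly this reparametrisation: the $V_l$-flow takes infinite time to reach $L^\sharp_\pm$, so no smooth structure at $L^\sharp_\pm$ is visible before blowing up, and one must check that the exponential rate of approach along the hyperbolic directions of the linearisation matches the natural radial scaling introduced by the blowup of $\spn L^\sharp_\pm$. Once this is verified, the conic singularity of $L^\bfc_\pm$ at $L^\sharp_\pm$ is a direct geometric consequence: $L^\bfc_\pm$ appears near $L^\sharp_\pm$ as the cone over a smooth $(2n-2)$-dimensional section of the front face of $\big[\Nsfstar Z_\bfc; \spn L^\sharp_\pm\big]$, which is precisely the definition of an intersecting pair with conic points.
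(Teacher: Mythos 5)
The paper itself gives no proof of this proposition; it is stated as a citation to \cite{HV1}, so I am assessing your sketch on its own merits. For part (i) your transversality argument is incomplete: you only check the $\pl_\sigma$-component $-\nu\sigma$ of $V_l$, which vanishes where $\nu=0$, and your remark about ``excluding by restricting to a neighbourhood of $L^\sharp_\pm$'' cannot help, since part (i) is a local statement near $\Nscstar\Diagb$, which is disjoint from $L^\sharp_\pm$ (on $\Nscstar\Diagb$ one has $\nu=-\nu'$, while on $L^\sharp_\pm$ one has $\nu=\nu'=\pm1$); the set $\nu=0$ inside $\Nscstar\Diagb\cap\Char$ has codimension one, not something to throw away. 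Transversality does in fact hold at $\nu=0$, but for a different reason: there $h=1$, and the $h\,\pl_\nu$ term in $V_l$ alone already violates the defining relation $\nu+\nu'=0$ of $\Nscstar\Diagb$, since $V_l$ does not touch $\nu'$. That is the argument you should be making.

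For part (ii) the gap is structural. You assert that ``the integral curve accumulates onto a fixed point of the flow that lies in $L^\sharp_\pm$,'' but this is false for a generic curve: by \eqref{Vl}, $V_l$ moves only $(\sigma,y,\nu,\mu)$ and leaves $(y',\nu',\mu')$ frozen, so a trajectory starting on $\Nscstar\Diagb\cap\Char$ with $\nu=\nu_0\in(-1,1)$ has forward limit $\nu\to1$, $\mu\to0$, $\sigma\to0$, but retains $\nu'=-\nu_0\neq1$; that limit sits on $\pl_{\lb}\Nsfstar Z_{\bfc}$ and \emph{not} on $L^\sharp_+$. Consequently there is no interior fixed point of $V_l$ on $L^\sharp_+$ at which to linearise, and Sternberg's theorem does not enter. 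The intersection $L^{\bfc}_+\cap L^\sharp_+$ occurs only in the closure of the flowout, at the corner of the genuinely two-parameter picture made explicit in \eqref{gamma^2}: on each leaf $\gamma^2\cong[0,\pi]^2_{(s,s')}$ one has $V_l=\sin s\,\pl_s$, $V_r=\sin s'\,\pl_{s'}$, $L^\sharp_\pm$ is the off-diagonal corner $(s,s')=(\pi,0)$ or $(0,\pi)$, $L^{\bfc}_+$ is the triangle $\{s\geq s'\}$, and the conic singularity is precisely the indeterminacy of the ratio $\sin s'/\sin s$ at that corner. Your argument, being purely one-dimensional along $V_l$, cannot detect this two-dimensional degeneration; to repair it you would need to work in the blown-up space $\big[\Nsfstar Z_{\bfc};\spn L^\sharp_\pm\big]$ and show the closure of the flowout lifts to a smooth submanifold meeting the front face transversally, which is not a corollary of a stable-manifold theorem for $V_l$ alone.
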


A second way to describe $L^{\bfc}$ is directly in terms of geodesic flow on the metric cone with cross section $(\partial M, h)$. Let $g_{\conic}$ be the conic metric
$$
g_{\conic} = dr^2 + r^2 h.
$$
Write $Y = \partial M$. 
Then geodesic flow on $(C(Y), g_{\conic})$, the cone over $(Y, h)$, can be written explicitly in terms of geodesic flow on $(Y, h)$ as follows: 
Let $(y(s), \eta(s))$, where $s \in [0, \pi]$ is an arc-length parameter (so that $|\eta(s)|_{h(y(s))} = 1$), be a geodesic in $T^* Y$. Then every geodesic $\gamma$ for the exact conic metric  $dr^2 + r^2 h$, not hitting the cone tip, is of the form $y = y(s), \mu = \eta(s) \sin s, r = r_0 \csc s, \nu = -\cos s$, $s \in (0, \pi)$, where $r_0 > 0$ is the minimum distance to the cone tip and  $-\cot s/r_0 \in (-\infty, \infty)$ is arc length on $C(Y)$. 
We define $\gamma^2$ to be the submanifold
\begin{multline}
\gamma^2 = \big\{ (y, y', \sigma = x/x', \mu, \mu', \nu, \nu') \mid y = y(s), y' = y(s'), \mu = \eta(s) \sin s, \\ \mu' = -\eta(s') \sin s', \nu = -\cos s, \nu' = \cos s', \sigma = \sin s'/\sin s, (s, s') \in [0, \pi]^2. \big\} 
\label{gamma^2}\end{multline}
Then $\Lbf$ is  the union of the $\gamma^2$ over all geodesics of length $\pi$ in $T^*(\partial M)$. The vector fields $V_l$ and $V_r$ are tangent to each leaf $\gamma^2$, and are given in terms of the coordinates $s$ and $s'$ by $V_l = \sin s \partial_s$ and $V_r = \sin s' \partial_{s'}$. Also, the intersection of this leaf with $\Nscstar \Diagb$ is $\{ s = s'\}$, and the conic singularity is at  the two off-diagonal corners $s = 0, s' = \pi$ and $s=\pi, s' = 0$, which corresponds to the two different ends of the geodesic. The blowup of the span of $L^\sharp$ that desingularizes these conic singularities corresponds on the leaf $\gamma^2$ to blowing up these two corners.

\subsection{Legendre distributions on $\MMkb$}\label{sect:legdist}

Let $\Lambda \subset \Tscstar_{\bfc} \MMb$ be a Legendre submanifold. We define a space of (half-density) functions on $\MMkb$ associated to $\Lambda$. As usual $n$ denotes the dimension of $M$. 

Let $\mcA = (\mcA_{\bfo}, \mcA_{\lb},\mcA_{\rb},\mcA_{\zf})$ be an index family consisting of an index set for each of the hypersurfaces $\bfo, \lbo, \rbo, \zf$ of $\MMkb$. Also let $m, r_{\lb}, r_{\rb}$ be real numbers. We shall shortly define the set of Legendre (half-density) distributions $I^{m, r_{\lb}, r_{\rb}; \mcA}(\MMkb, \Lambda; \Omegakbh)$. First we give the intuitive idea: for $\lambda > 0$ it is a family of Legendre distributions on $\MMb$,  depending conormally on $\lambda$ as $\lambda \to 0$ with respect to the given family. Away from $\bfc \cup \lb \cup \rb$ it is polyhomogeneous conormal with respect to the given index family. 

We remark that the parametrizations of $\Lambda$ given in the definition below are defined in \cite{HV1} and \cite{HV2}. 

\begin{defn}\label{defn:legdist} The space  $ I^{m, r_{\lb}, r_{\rb}; \mcA}(\MMkb, \Lambda; \Omegakbh)$ consists of half-densities $u$ on $\MMkb$ that  can be written as a finite sum of terms $u = \sum_{j=0}^6 u_j$, where
\begin{itemize}

\item $u_0$ is supported in $\{ \lambda \geq \epsilon \}$ for some $\epsilon > 0$, and $u \otimes |d\lambda/\lambda|^{-1/2}$ is a family of 
Legendre distributions in $I^{m, r_{\lb}, r_{\rb}}(\MMb, \lambda \Lambda; \Omegasch)$ with symbol depending smoothly on $\lambda$;

\item $u_1$ is supported close to $\bfo \cap \bfc$ and away from $\lb \cup \rb$, and is given by a finite sum of expressions
\begin{equation}
\rho^{m-k/2+n/2} \lambda^n  \int_{\RR^k} e^{i\Psi(y, y', \sigma, v)/\rho} a(\lambda, \rho, y, y', \sigma,v) \, dv \Big| \frac{dg dg' d\lambda}{\lambda}\Big|^{1/2}
\label{u1}\end{equation}
where $\sigma = x/x'$, $\Psi$ locally parametrizes $\Lambda$ in the sense of \cite{HV1}, \cite{HV2} and $a$ is polyhomogeneous conormal  in $\lambda$, with respect to the index set $\mcA_{\bfo}$, at $\lambda = 0$ and is smooth in all other variables;

\item $u_2$ is supported close to $\bfo \cap \bfc \cap \lb$, and is given by a finite sum of expressions
\begin{multline}
{\rho'}^{m-(k+k')/2+n/2} \sigma^{r_{\lb} - k/2} \lambda^n  \\ \times \int_{\RR^{k+k'}} e^{i\big( \Psi_0(y,v) + \sigma \Psi_1(y, y', \sigma, v, v') \big)/\rho}  a(\lambda, \rho', y, y', \sigma, v, v') \, dv \, dv' \,  \Big| \frac{dg dg' d\lambda}{\lambda}\Big|^{1/2};
\label{u2}\end{multline}
where $\Psi_0 + \sigma \Psi_1$ locally parametrizes $\Lambda$ and $a$ is polyhomogeneous conormal  in $\lambda$, with respect to the index set $\mcA_{\bfo}$, at $\lambda = 0$ and is smooth in all other variables;

\item $u_3$ is supported close to $\bfo \cap \bfc \cap \rb$, and is given by a similar expression to $u_2$ with $(x,y)$ and $(x',y')$ interchanged, and $r_{\lb}$ replaced by $r_{\rb}$;

\item $u_4$ is supported close to $\lb \cap \bfo$ and away from $\bfc$, and is given by a finite sum of expressions of the form
\begin{equation}
\rho^{r_{\lb} - k/2} \lambda^n \int_{\RR^k}
e^{i\Psi_0(y,v) /\rho} a(\rho, x', 1/\rho', y, y',  v) \, dv \Big| \frac{dg dg' d\lambda}{\lambda}\Big|^{1/2}
\label{u4}\end{equation}
where $\Psi_0$ locally parametrizes $\Lambda_{\lb}$ and $a$ is polyhomogeneous conormal  in $(x', 1/\rho')$, with respect to the index sets $(\mcA_{\bfo}, \mcA_{\lbo})$ and is smooth in all other variables;

\item $u_5$ is supported close to $\rb \cap \bfo$ and away from $\bfc$, and is given by a similar expression to $u_4$ with  $(x,y)$ and $(x',y')$ interchanged, $r_{\lb}$ replaced by $r_{\rb}$, and $\mcA_{\lbo}$ replaced by $\mcA_{\rbo}$;

\item $u_6$ is supported away from $\bfc \cup \lb \cup \rb$ and is of the form $a \tau$ where $\tau$ is a smooth nonvanishing section of $\Omegakb^{1/2}$ and  $a$ is polyhomogeneous with index family $\mcA$ at $\bfo, \lbo, \rbo, \zf$. 
\end{itemize}
\end{defn}

\begin{remark} Recall that, away near $\bfc$, a smooth nonvanishing section of $\Omegakbh$ is given by $\lambda^n |dg dg' d\lambda/\lambda|^{1/2}$; this accounts for the  factors of $\lambda^n$ in \eqref{u1}, \eqref{u2} and \eqref{u4}. 
\end{remark}

\begin{remark} We have chosen here a different order convention from that used in \cite{HW}.  Our convention here has the advantage that if $u \in I^{m, r_{\lb}, r_{\rb}; \mcA}(\MMkb, \Lambda; \Omegakbh)$ then for $\lambda > 0$, $u$ is a smooth family of 
Legendre distributions in $I^{m, r_{\lb}, r_{\rb}}(\MMb, \lambda \Lambda; \Omegasch)$ (tensored with $d\lambda/\lambda|^{1/2}$); i.e., the orders do not change. Our orders $m, r_{\lb}, r_{\rb}$ here corresponds to orders $m+1/4, r_{\lb}+1/4, r_{\rb}+1/4$ in \cite{HW}. 
\end{remark}

In an analogous way, we can define intersecting Legendrian distributions and distributions associated to intersecting pairs of Legendre submanifolds with conic points on $\MMkb$, based on the definitions given in \cite{HV2} for such distributions on $\MMb$. 

\begin{defn}\label{defn:intlegdist} Let $(\Lambda_0, \Lambda_+)$ be an intersecting pair of Legendre submanifolds in $\Nsfstar Z_{\bfc}$, which do not meet the left and right boundaries of $\Nsfstar Z_{\bfc}$. 
The space  $ I^{m, r_{\lb}, r_{\rb}; \mcA}(\MMkb, (\Lambda_0, \Lambda_+); \Omegakbh)$ consists of half-density functions $u$ on $\MMkb$ that  can be written as a finite sum of terms $u = \sum_{j=0}^3 u_j$, where
\begin{itemize}

\item $u_0$ is supported in $\{ \lambda \geq \epsilon \}$ for some $\epsilon > 0$, and $u \otimes |d\lambda/\lambda|^{-1/2}$ is a family of 
Legendre distributions in $I^{m, r_{\lb}, r_{\rb}}(\MMb,  (\lambda\Lambda_0, \lambda\Lambda_+); \Omegasch)$ with symbol depending smoothly on $\lambda$;

\item $u_1$ is an element of $I^{m, r_{\lb}, r_{\rb}; \mcA}(\MMkb, \Lambda_0; \Omegakbh)$, microsupported away from $\Lambda_+$;

\item $u_2$ is an element of $I^{m-1/2, r_{\lb}, r_{\rb}; \mcA}(\MMkb, \Lambda_+; \Omegakbh)$, microsupported away from $\Lambda_0$;

\item $u_3$ is supported close to $\bfo \cap \bfc$ and away from $\lb \cup \rb$, and is given by a finite sum of expressions
\begin{equation}
\rho^{m-(k+1)/2+n/2} \lambda^n \int_0^\infty ds \int_{\RR^k} e^{i\Psi(y, y', \sigma, v,s)/\rho} a(\lambda, \rho, y, y', \sigma,v,s) \, dv \Big| \frac{dg dg' d\lambda}{\lambda}\Big|^{1/2}
\label{u3int}\end{equation}
where $\Psi$ locally parametrizes $(\Lambda_0, \Lambda_+)$ in the sense of \cite{HV1}, \cite{HV2} and $a$ is polyhomogeneous conormal  in $\lambda$, with respect to the index set $\mcA_{\bfo}$, at $\lambda = 0$ and is smooth in all other variables. 
\end{itemize}
\end{defn}

\begin{defn}\label{defn:coniclegdist} Let $(\Lambda, \Lambda^\sharp)$ be an pair of intersecting Legendre submanifolds with conic points in $\Nsfstar Z_{\bfc}$. 
The space  $ I^{m, p; r_{\lb}, r_{\rb}; \mcA}(\MMkb, (\Lambda, \Lambda^\sharp); \Omegakbh)$ consists of half-density functions $u$ on $\MMkb$ that  can be written as a finite sum of terms $u = \sum_{j=0}^5 u_j$, where
\begin{itemize}

\item $u_0$ is supported in $\{ \lambda \geq \epsilon \}$ for some $\epsilon > 0$, and $u \otimes |d\lambda/\lambda|^{-1/2}$ is a family of 
Legendre distributions in $I^{m, p; r_{\lb}, r_{\rb}}(\MMb, ( \lambda\Lambda,  \lambda\Lambda^\sharp); \Omegasch)$ with symbol depending smoothly on $\lambda$;

\item $u_1$ is an element of $I^{m, r_{\lb}, r_{\rb}; \mcA}(\MMkb, \Lambda; \Omegakbh)$, microsupported away from $\Lambda^\sharp$;

\item $u_2$ is an element of $I^{p, r_{\lb}, r_{\rb}; \mcA}(\MMkb, \Lambda^\sharp; \Omegakbh)$, microsupported away from $\Lambda$;

\item $u_3$ is supported close to $\bfo \cap \bfc$, and away from $\lb \cup \rb$,  and is given by a finite sum of expressions 
\begin{multline}
 \lambda^n \int_0^\infty ds \int_{\RR^k} e^{i\Psi(y, y', \sigma, v,s)/\rho} \big( \frac{\rho}{s} \big)^{m-(k+1)/2+n/2} s^{p+n/2-1} \\ 
 \times
 a(\lambda, \frac{\rho}{s}, y, y', \sigma,v,s) \, dv \Big| \frac{dg dg' d\lambda}{\lambda}\Big|^{1/2}
\label{u3con}\end{multline}
where $\Psi$ locally parametrizes $(\Lambda, \Lambda^\sharp)$ in the sense of \cite{HV1}, \cite{HV2} and $a$ is polyhomogeneous conormal  in $\lambda$, with respect to the index set $\mcA_{\bfo}$, at $\lambda = 0$ and is smooth in all other variables;

\item $u_4$ is supported close to $\bfo \cap \bfc \cap \lb$ and is given by a finite sum of expressions
\begin{multline}
 \lambda^n \int_0^\infty ds \int_{\RR^k} e^{i\Psi(y, y', \sigma, v,s)/\rho} \big( \frac{\rho'}{s} \big)^{m-(k+1)/2+n/2} s^{p+n/2-1} \sigma^{r_{\rb}} \\ \times
 a(\lambda, \frac{\rho'}{s}, y, y', \sigma,v,s) \, dv \Big| \frac{dg dg' d\lambda}{\lambda}\Big|^{1/2}
\label{u4con}\end{multline}
where $\Psi$ locally parametrizes $(\Lambda, \Lambda^\sharp)$ in the sense of \cite{HV1}, \cite{HV2} and $a$ is polyhomogeneous conormal  in $\lambda$, with respect to the index set $\mcA_{\bfo}$, at $\lambda = 0$ and is smooth in all other variables;

\item $u_5$ is supported close to $\bfo \cap \bfc \cap \rb$ and is given by a finite sum of expressions analogous to \eqref{u4con}, with $(x,y)$ and $(x',y')$ interchanged, and $r_{\lb}$ replaced by $r_{\rb}$.

\end{itemize}
\end{defn}

\begin{remark} There are typos in  the expression \cite[equation (2.23)]{HV2}  corresponding to \eqref{u4con}. These have been fixed here. For purposes of comparison, notice that in \cite[equation (2.23)]{HV2}, in the exponent of $x_1$, $N/4 - f_i/2$ vanishes (in the present situation).
\end{remark}

\subsection{The boundary hypersurface $\bfo$}\label{sec:bfo} The boundary hypersurface $\bfo$ of $\MMksc$ or $\MMkb$ plays a crucial role in our analysis. This is because it corresponds to the transitional asymptotics between  zero energy  and  positive energy behaviour. Section~\ref{exactcone} is devoted to the analysis of the 
model operator induced by $P$ on $\bfo$, namely the conic Schr\"odinger operator \eqref{conicSchr}. Here we note the geometric structures on $\bfo \subset \MMkb$ induced from $\MMkb$. (Unless specifically indicated, we work on $\MMkb$ rather than $\MMksc$ below.)

We first observe that $\bfo$ is a blown-up version of $\ff \times \ff$, where $\ff$ is the front (blown-up) face of $\Mkb$. The front face $\ff$ is given by $\partial M \times [0, \infty]_r$ where $r = 1/\rho = \lambda/x$. Indeed, the interior of $\bfo$ admits smooth coordinates $(r, y, r', y')$, where $y, y' \in \partial M$, $r, r' \in (0, \infty)$, and we can easily check that $\bfo$ is obtained from $\ff \times \ff$ by performing $b$-blowups at the diagonal corners $\{ r = r' = 0 \}$ and $\{ \rho = \rho' = 0 \}$.  
Moreover, if we work on $\MMksc$, then the scattering blowup \eqref{scblowup} has the effect of performing the scattering blowup on $\bfo$, i.e.\ blowing up $\{ \rho = \rho' = 0, y = y' \}$. (Recall that we have already observed in \eqref{exactconicmetric} that the metric $g$ induces an exact conic metric on the front face of $\Mkb$, hence a scattering metric at $\rho = 0$ and conformal to a b-metric at $r=0$.)

Next consider the vector fields $\mathcal{V}_{k,b}(\MMkb)$ restricted to $\bfo$. First, on the single space, the vector fields $\mathcal{V}_{k,b}(\Mkb)$ restrict to $\ff$ to be scattering vector fields near $\rho = 0$, and b-vector fields near $r=0$. These vector fields on $\ff$ can be lifted to $\bfo$ via either the left or right stretched projections $\bfo \to \ff$, and generate a space of vector fields on $\bfo$ that coincide with the restriction of $\mathcal{V}_{k,b}(\MMkb)$ to $\bfo$. In turn, this space of vector fields in $\bfo$ defines  a vector bundle over $\bfo$ for which such vector fields are the smooth sections. 
Its dual bundle ${}^{b,k} T^* \bfo$ can be identified with the subbundle
of ${}^{k,b}T^*_{\bfo}\MMkb$ annihilated by the vector field $\lambda \partial_\lambda + x \partial_x + x' \partial_{x'}$. In terms of coordinates \eqref{T}, this bundle is given by $\{ \lambda = 0, T = 0 \}$. As this is a symplectic reduction of the bundle ${}^{k,b} T^* \MMkb$, there is a symplectic form induced on ${}^{b,k} T^* \bfo$ by the form $\omega$, which, as in Section~\ref{facs}, induces a contact structure on ${}^{b,k} T^*_{\bfc \cap \bfo} \bfo$, i.e.\ when we restrict to the boundary hypersurface $\bfo \cap \bfc$. 
This restricted bundle is isomorphic (as a bundle and as a contact manifold) to $\Nsfstar Z_{\bfc}$. Therefore, we can define Legendre submanifolds, Legendre distributions, etc, for $\bfo$. However, this is nothing new --- this precisely reproduces the structure described in \cite{HV1} and \cite{HV2} for a manifold with boundary; we could alternatively derive it by treating $\ff$ as a scattering manifold by ignoring the `b'-boundary at $r = 0$, and working locally near the scattering boundary $\rho = 0$, or equivalently working locally near $\bfo \cap \bfc$. 

A consequence of the isomorphism between $\Nsfstar Z_{\bfc}$ 
and ${}^{b,k} T^*_{\bfc \cap \bfo} \bfo$ is that 
Legendre distributions, as defined above, on $\MMkb$ induce Legendre distributions on $\bfo$, essentially by restriction to $\bfo$ --- see Proposition~\ref{bfo-rest} for a precise statement.

\subsection{Statement of main results}

The main result of this paper is a rather precise  description of the resolvent kernel on the space $\MMksc$.

\begin{theo}\label{mainres} There is an index family $\mcB = (\mcB_{\zf}, \mcB_{\bfo}, \mcB_{\lbo}, \mcB_{\rbo})$ such that the
outgoing resolvent kernel $R(\lambda + i0)$, for $\lambda \leq \lambda_0$, can be represented as the sum of four terms $R_1 + R_2 + R_3 + R_4$, where 
\begin{itemize}
\item $R_1 \in \Psi^{-2, (-2, 0, 0), \*}(M, \Omegabht)$ is a pseudodifferential operator of order $-2$ in the calculus of operators defined in \cite{GH1};

\item $R_2 \in I^{m,\mcA}(\MMkb, (\Nscstar\Diagb, L^{\bfc}_+); \Omegakbh)$ is an intersecting Legendre distribution on $\MMkb$, microsupported close to $\Nscstar\Diagb$;

\item $R_3 \in I^{m,p; r_{\lb}, r_{\rb}; \mcA}(\MMkb, (L^{\bfc}_+, \Lsharp_+); \Omegakbh)$ is a Legendre distribution on $\MMkb$ associated to the intersecting pair of Legendre submanifolds with conic points $(L^{\bfc}_+, \Lsharp_+)$, microsupported away from $\Nscstar\Diagb$;

\item $R_4$ is supported away from $\bfc$ and is such that $e^{-i/\rho} e^{-i/\rho'} R_4$ is\footnote{Notice that $e^{i/\rho} = e^{i\lambda r}$ is the usual outgoing oscillation at infinity} 
polyhomogeneous conormal  on $\MMkb$ with index family $\mcB \cup (\mcB_{\lb}, \mcB_{\rb}, \mcB_{\bfc})$, where $\mcB_{\lb} = \mcB_{\rb} = (n-1)/2$ and $\mcB_{\bfc} = \emptyset$.
\end{itemize}
We have $m = -1/2$, $p = (n-2)/2$, $r_{\lb} = r_{\rb} = (n-1)/2$,  $\min \mcB_{\zf} = 0$, $\min \mcB_{\bfo} = -2$, $\min \mcB_{\lbo} = \min \mcB_{\rbo} = \nu_0 - 1$. Moreover, the leading asymptotics of $R(\lambda + i0)$ at $\bfo, \zf, \lbo, \rbo$ are given by \eqref{resmodels}.
\end{theo}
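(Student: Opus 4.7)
The plan is to construct a parametrix $G(\lambda)$ for $P-\lambda^2$ whose Schwartz kernel has precisely the claimed structure $R_1+R_2+R_3+R_4$, and then to upgrade it to the genuine resolvent by correcting the error using the parametrix identity
\[
R(\lambda+i0) = G(\lambda) - R(\lambda+i0)\bigl((P-\lambda^2)G(\lambda) - I\bigr).
\]
This reduces the problem to (a) building $G$ with the right face-by-face model and an error $E=(P-\lambda^2)G-I$ that is polyhomogeneous of arbitrarily high order at every zero-energy face, and (b) verifying that composition with such a small error preserves the class of conormal--Legendrian kernels on $\MMkb$.

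I would build $G$ iteratively. Near the lifted diagonal, start with the symbolic parametrix in the calculus of \cite{GH1} to produce $G_1$, handling the conormal singularity and modelling the leading behaviour at $\bfo\cap\Diagkb$ and $\zf\cap\Diagkb$. To propagate off the diagonal at $\bfc$ I would flow along the rescaled Hamilton vector field $V_l$ of \eqref{Vl}; this sweeps out exactly the intersecting Legendrian $L^{\bfc}_+$ (as seen concretely from the geodesic description \eqref{gamma^2}) and produces the intersecting Legendrian piece $G_2$, following the fixed-energy construction of \cite{HV1, HV2}. The flow continues until the endpoints of length-$\pi$ geodesics where the Legendrian acquires its conic singularity $\Lsharp_+$; the outgoing radiation condition selects the $+$ branch, and the pair-with-conic-points calculus of Definition~\ref{defn:coniclegdist} yields $G_3$.

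The leading asymptotics at the zero-energy faces are fixed by three model operators. At $\zf$ the model is the zero-energy b-resolvent of $P$ studied in \cite{GH1}: the no-zero-modes/no-resonances hypothesis \eqref{hyp3} guarantees its existence and gives $\min\mcB_{\zf}=0$. At $\lbo$ and $\rbo$ the model is the indicial family of $P$ at $\partial M$; the smallest indicial root compatible with \eqref{hyp2} is $-(n/2-1)+\nu_0$, which combined with the half-density normalization of \eqref{lblbo} gives $\min\mcB_{\lbo}=\min\mcB_{\rbo}=\nu_0-1$. At $\bfo$, the most delicate face, the model is the spectral resolvent of the exact conic Schr\"odinger operator on $C(\partial M)$; its analysis (carried out in the next section via separation of variables and Hankel function asymptotics) provides the leading blow-up $\rho_{\bfo}^{-2}$ and the value $\min\mcB_{\bfo}=-2$.

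The principal difficulty, and the crux of the argument, is verifying compatibility of these local models at corners, notably $\bfo\cap\bfc$, $\bfo\cap\lbo$ and $\bfo\cap\rbo$: the Legendrian piece $G_2+G_3$, restricted to $\bfo$ via the isomorphism between the compressed contact bundle over $\bfc\cap\bfo \subset \bfo$ and $\Nsfstar Z_\bfc$ described in Section~\ref{sec:bfo}, must agree with the Legendrian part of the $\bfo$-model, and its polyhomogeneous tails at $\lbo, \rbo$ must match the appropriate indicial data. Once these consistency checks are in place, iterating the parametrix construction a finite number of times drives $E$ to vanish to arbitrary order at all four zero-energy faces, after which a Neumann-series argument using the known weighted $L^2$-boundedness of $R(\lambda+i0)$ away from $\lambda=0$ absorbs the remainder. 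The final technical step, which I expect to be the main obstacle, is showing that composition of a conormal--Legendrian kernel with such a high-order error remains in the same class: this is a low-energy extension of the pushforward computation for Legendrian composition in \cite{HV1, HV2}, carried out on the triple space $\MMMksc$ whose geometric structure follows the pattern already set up in Section~\ref{5.1}.
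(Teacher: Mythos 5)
Your proposal follows essentially the same route as the paper: construct a parametrix face by face using the $b$--calculus model at $\zf$, the indicial data at $\lbo,\rbo$, the exact conic resolvent at $\bfo$ and the intersecting/conic Legendrian calculus of \cite{HV1,HV2} at $\bfc$, verify compatibilities at the corners, then absorb the error by a Neumann series and a composition/pushforward argument. The paper's actual construction splits the parametrix as $\Gb + \Gsc$ via a cutoff $\chi(x/\lambda)$; you skip this but that is merely a technical convenience and not a difference of substance.

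Two points in your sketch are, however, conceptually off and would need to be repaired if the argument were carried out. First, you claim that ``iterating the parametrix construction a finite number of times drives $E$ to vanish to arbitrary order at all four zero-energy faces.'' This is neither necessary nor achievable as stated: the operator $P-\lambda^2$ acts in the left variable, so while errors at $\lb$, $\bfc$, $\lbo$, $\bfo$, $\zf$ can be solved away order by order, the error at $\rb$ (and the finite index set at $\rbo$) is intrinsically of fixed finite order. The paper arranges $\min\mcE_{\zf}=\min\mcE_{\bfo}=1$, $\min\mcE_{\rbo}=\nu_0$, $\min\mcE_{\rb}=(n-1)/2$, with $\mcE_{\bfc}=\mcE_{\lb}=\emptyset$, and stops there. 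Second, your mechanism for summing the Neumann series --- ``the known weighted $L^2$-boundedness of $R(\lambda+i0)$ away from $\lambda=0$'' --- does not help, since the entire point of the theorem is uniformity as $\lambda\to 0$. What actually makes the series converge is that the error is Hilbert--Schmidt on $x^l L^2(M)$ with norm tending to zero as $\lambda\to 0$ (because $E$ vanishes to order $1$ at $\zf$ and $\bfo$ after passing to $b$-half-densities), so that $\Id+E$ is invertible for all sufficiently small $\lambda$ with no a priori input about $R$. Finally, the claim that the correction term stays in the conormal--Legendrian class needs more than a pushforward on $\MMMksc$: the paper first conjugates by $e^{\pm i/\rho}$, shows $S_\phg$ is polyhomogeneous via the splitting $S_\phg=\sum_{j\le 2N}(-1)^jE_\phg^j+E_\phg^N S_\phg E_\phg^N$ and a Cauchy--Schwarz estimate for the remainder, and then carries out a polyhomogeneous (not Legendrian) pushforward on the $b$-triple space, exploiting the fact that the rapid decay of $S$ at $\bfc$ and $\lb$ kills the oscillatory behaviour of $G$ there. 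That remainder estimate is the genuinely non-trivial step and should be flagged, not deferred as routine.
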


There is a corresponding statement about the incoming resolvent, with $L^{\bfc}_+$ and $\Lsharp_+$ replaced by $L^{\bfc}_-$ and $\Lsharp_-$. Subtracting the incoming from the outgoing resolvent we obtain our result about the spectral measure:

\begin{theo}\label{mainsm} The difference between the outgoing and incoming resolvents is a conormal-Legendre distribution on $\MMkb$ associated to the intersecting pair of Legendre submanifolds with conic points $(L^{\bfc}, \Lsharp)$. More precisely, we have
$$
R(\lambda + i0) - R(\lambda - i0) \in I^{m,p; r_{\lb}, r_{\rb}; \mcA'}(\MMkb, (L^{\bfc}, \Lsharp); \Omegakbh),$$ 
where $\mcA'_{\bullet} = \mcA_{\bullet}$ for $\bullet = \lbo, \rbo, \bfo$, but
$$\mcA'_{\zf} \subset \mcA_{\zf} \setminus \{ (\beta, j) \mid \beta < 2\nu_0 \}
$$
and $\mcA'_{\zf} \setminus\{2\nu_0\}\geq  \min(2\nu_0+1,2\nu_1)$. 
Consequently, the spectral measure \eqref{Stone} of $P_+^{1/2}$ satisfies
$$
dE_{P_+^{1/2}}(\lambda) \in  I^{m,p; r_{\lb}, r_{\rb}; \mcA''}(\MMkb, (L^{\bfc}, \Lsharp); \Omegakbh) \otimes {|\lambda d\lambda|^{1/2}}$$
where $\mcA''_{\bullet} = \mcA'_{\bullet} + (1,0)$; in particular, the spectral measure vanishes to order $2\nu_0 + 1$ for fixed $z, z' \in M^\circ$. The leading asymptotic of the spectral measure at $\zf$ is given by \eqref{smzf}. 
\end{theo}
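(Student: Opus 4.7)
The strategy is to apply Theorem \ref{mainres} separately to $R(\lambda + i0)$ and $R(\lambda - i0)$ and analyze the cancellations in their difference. The analogous decomposition for the incoming resolvent is obtained from that of the outgoing one by replacing $(L^{\bfc}_+, \Lsharp_+)$ by the incoming counterparts $(L^{\bfc}_-, \Lsharp_-)$ throughout. The pseudodifferential piece $R_1$ arises from a symbolic parametrix construction with real principal symbol and thus coincides for both signs of $i0$, so $R_1^+ - R_1^- = 0$. Likewise the conormal component of $R_2^{\pm}$ along $\Nscstar \Diagb$ has real symbol and cancels. Consequently the diagonal singularity disappears in the difference, consistent with the known regularity of the spectral projection in the interior.

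What survives of $R_2^+ - R_2^-$ is a pure Legendre distribution associated to $L^{\bfc}_+ \cup L^{\bfc}_- = L^{\bfc}$. Adding in $R_3^+ - R_3^-$, which is associated to $(L^{\bfc}_+, \Lsharp_+) \cup (L^{\bfc}_-, \Lsharp_-) = (L^{\bfc}, \Lsharp)$, and $R_4^+ - R_4^-$, whose oscillatory factors $e^{\pm i/\rho} e^{\pm i/\rho'}$ fit precisely into the conic Legendrian $\Lsharp_\pm$, assembles the difference into an element of $I^{m,p; r_{\lb}, r_{\rb}; \mcA'}(\MMkb, (L^{\bfc}, \Lsharp); \Omegakbh)$ for an index family $\mcA'$. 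The asymptotics at $\bfo, \lbo, \rbo$ are inherited directly from Theorem \ref{mainres}, so $\mcA'_\bullet = \mcA_\bullet$ for $\bullet = \bfo, \lbo, \rbo$.

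The main obstacle is the improved vanishing at $\zf$. Here I would use the model analysis developed at the front face $\bfo$ (Section \ref{exactcone}): the resolvent at $\bfo$ is controlled by the resolvent on the exact cone $(C(\partial M), g_{\conic})$. Decomposing along the eigenspaces with eigenvalues $\nu_j^2$ of the cross-sectional operator $\Delta_{\partial M} + (n-2)^2/4 + V_0$, the outgoing and incoming resolvents on the cone are given fibrewise by Hankel functions $\Ha^{(1)}_{\nu_j}(\lambda r)$ and $\Ha^{(2)}_{\nu_j}(\lambda r)$, and their difference reduces to a multiple of $J_{\nu_j}(\lambda r) J_{\nu_j}(\lambda r')$, which vanishes like $\lambda^{2\nu_j}$ as $\lambda \to 0$ with $z, z'$ fixed. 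Equivalently, every real Riesz-type term of order less than $2\nu_0$ in the expansion of $R(\lambda \pm i0)$ at $\zf$ cancels in the difference, and the first non-cancelling term is of order $\lambda^{2\nu_0}$ with rank-one coefficient $w(z)w(z')$, where $w$ is a generalized $\nu_0$-harmonic function solving $Pw = 0$ with the prescribed asymptotic $w = O(x^{n/2 - 1 - \nu_0})$; the hypothesis \eqref{hyp3} ensures $w$ exists and is nontrivial. The next surviving term appears at order $\min(2\nu_0 + 1, 2\nu_1)$, whichever corresponds to the next radial correction or the second cross-sectional eigenspace; this gives the stated bounds on $\mcA'_\zf$.

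Finally, the statement about the spectral measure follows from Stone's formula \eqref{Stone}: the factor $\lambda$ vanishes to first order at each of $\bfo, \lbo, \rbo, \zf$, so multiplication by $\lambda$ shifts every index set by $(1,0)$, yielding $\mcA''_\bullet = \mcA'_\bullet + (1,0)$. In particular $dE_{P_+^{1/2}}(\lambda)$ vanishes to order $2\nu_0 + 1$ at $\zf$, with leading coefficient the constant multiple of $\lambda^{2\nu_0+1} w(z) w(z')$ obtained by applying Stone's formula to the leading $\zf$-term identified above.
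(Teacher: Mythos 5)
Your overall strategy mirrors the paper's: difference the decomposition of Theorem~\ref{mainres}, observe that the diagonal/$\Nscstar\Diagb$ singularities cancel, and then establish the improved vanishing at $\zf$. Two points, one minor and one serious.

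For the $\Nscstar\Diagb$ cancellation you argue that the conormal symbol along $\Nscstar\Diagb$ is real and therefore cancels. This is essentially an observation about the parametrix, not a proof about the resolvent; the paper instead invokes propagation of Legendrian regularity along the Hamilton vector field $V_l$, which is nonzero at $L^{\bfc}\cap\Nsfstar\Diagb$ and tangent to $L^{\bfc}$, so that once the difference solves $(P-\lambda^2)u=0$ microlocally its $\Nscstar\Diagb$ singularity propagates away and is absorbed into $L^{\bfc}$. That argument is clean and closes the loop; yours is in the right spirit but would need fleshing out.

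The serious gap is in the $\zf$ cancellation. You compute the outgoing--incoming difference of the exact conic resolvent at $\bfo$: $J_{\nu_j}\Ha^{(1)}_{\nu_j}-J_{\nu_j}\Ha^{(2)}_{\nu_j}=2iJ_{\nu_j}J_{\nu_j}$, which vanishes like $\lambda^{2\nu_j}$ as the argument goes to zero. That is correct, but it is a statement about the leading-order model at $\bfo$, hence only constrains the corner $\bfo\cap\zf$. It does not determine the full expansion of the true resolvent at $\zf$, because (a) the resolvent is not equal to the $\bfo$ model beyond leading order, and (b) corner compatibility alone does not propagate into the interior of $\zf$. Concretely, you have shown nothing about the correction term $GS=(G)((\Id+E)^{-1}-\Id)$, which a priori could contribute non-real terms at $\zf$ of order lower than $2\nu_0$. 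The paper's proof of Proposition~\ref{cancellation} handles precisely this: it first constructs the parametrix $G$ so that its $\zf$-expansion is real to order $2\nu_0$ (verifying this against the models at $\bfo$, $\lbo$, $\rbo$, and $\zf$, and choosing the free $\zf$-coefficients suitably), and then uses the composition formulas \eqref{Ejformulae} to prove inductively that $E^j$, $S$, and finally $GS$ inherit this real-to-order-$2\nu_0$ property. Your "equivalently, every real Riesz-type term of order less than $2\nu_0$ in the expansion of $R(\lambda\pm i0)$ at $\zf$ cancels" is asserting exactly the conclusion of that inductive argument without establishing it; the word "equivalently" is doing work the model analysis alone cannot justify.

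The last paragraph (shifting index sets by $(1,0)$ via the $\lambda$ in Stone's formula, since $\lambda$ is a product of boundary defining functions for $\bfo,\lbo,\rbo,\zf$ on $\MMkb$) is correct and matches the paper.
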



\section{Symbol calculus for Legendre distributions}\label{sect:symbolcalc} 
The symbol calculus follows in a straightforward way from that given for Legendrian distributions on $\MMb$ given in \cite{HV2}. We state the results for $\MMkb$ here without proof. (One reason for stating the results here is to correct some typos in \cite{HV2}; for example, in the exact sequence of Proposition 3.4 of \cite{HV2}, $\rho^{m-{\bf r}}$ should be $\rho^{{\bf r}-m}$, and Proposition 3.5 has a similar typo.)

The principal symbol map is defined on the space $I^{m, r_{\lb}, r_{\rb}; \mcA}(\MMkb, \Lambda; \Omegakbh)$ of 
Legendre distributions associated to $\Lambda$ and maps to bundle-valued half-densities on $\Lambda \times [0, \lambda_0]$.
Here the bundle in question is the symbol bundle $S^{[m]}(\Lambda)$, pulled back to $\Lambda$ (which we continue to denote $S^{[m]}(\Lambda)$), defined in 
\cite{HV2} or \cite{HW} :
\begin{equation}
S^{[m]}(\Lambda) = M(\Lambda) \otimes E \otimes |N^*_{\bfc} (\partial \MMkb) |^{m-(2n+1)/4}\label{S[m]defn}
\end{equation}
(where $M(\Lambda)$ is the Maslov bundle, and the other bundles are defined in \cite{HV2} or \cite{HW}). Let $\mcC$ denote the index family for the boundary hypersurfaces of $\Lambda \times [0, \lambda_0)$ 
that assigns $\mcA_{\bfo}$ at $\Lambda \times \{ 0 \}$, the one-step index set $r_{\lb} - m$ at $\partial_{\lb} \Lambda \times [0, \lambda_0)$ and the  one-step index set $r_{\rb} - m$ at $\partial_{\rb} \Lambda \times [0, \lambda_0)$. Thus $\mcC$ depends on the data $(\mcA, m, r_{\lb}, r_{\rb})$. 
Then the principal symbol $\sigma^m(u)$ of $u \in I^{m, r_{\lb}, r_{\rb}; \mcA}(\MMkb, \Lambda; \Omegakbh)$ takes values in the polyhomogeneous space $ \phgc_{\mcC}(\Lambda \times [0, \lambda_0]; S^{[m]}(\Lambda) \otimes \Omegabh)$. It is defined by continuity from the symbol map given in \cite{HV2}. 

The following propositions follow straightforwardly from the corresponding results in Section 3 of \cite{HV2}.

\begin{prop}\label{ex} There is an exact sequence
\begin{multline*}
0 \to I^{m+1, r_{\lb}, r_{\rb}; \mcA}(\MMkb, \Lambda; \Omegakbh) \to I^{m, r_{\lb}, r_{\rb}; \mcA}(\MMkb, \Lambda; \Omegakbh) \to  \\
 \phgc_{\mcC}(\Lambda \times [0, \lambda_0], \Omega^\half_b \otimes S^{[m]}(\Lambda)) \to 0.
\end{multline*}
If $u \in I^{m, r_{\lb}, r_{\rb}; \mcA}(\MMkb, \Lambda; \Omegakbh)$, 
then $(P - \lambda^2)u \in I^{m, r_{\lb}, r_{\rb}; \mcA+2}(\MMkb, \Lambda; \Omegakbh)$ and 
$$
\sigma^m((P - \lambda^2)u) =  \sigma_l(P - \lambda^2) \sigma^m(u) .
$$
Thus, if $\sigma_l(P - \lambda^2)$ vanishes on $\Lambda$, $(P - \lambda^2)u \in I^{m+1, r_{\lb}, r_{\rb}; \mcA+2}(M, \Lambda;
\Omegakbh)$. The symbol of order $m+1$ of $(P - \lambda^2)u$ in this case is given by
\begin{equation}
\Big( -i \mathcal{L}_{V_l} -i \big(\half + m - \frac{2n+1}{4} \big) \nu + p_{\mathrm{sub}} \Big) \sigma^m(u) \otimes |dx|,
\label{transport}\end{equation}
where $V_l$ is the vector field \eqref{Vl} and $p_{\mathrm{sub}}$ is the boundary subprincipal symbol of $P - \lambda^2$. 
\end{prop}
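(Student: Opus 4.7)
The plan is to reduce all three statements to the symbol calculus on $\MMb$ established in \cite[Section 3]{HV2}, by regarding any $u \in I^{m, r_{\lb}, r_{\rb}; \mcA}(\MMkb, \Lambda; \Omegakbh)$ as a polyhomogeneous family in $\lambda$ of Legendre distributions on $\MMb$ associated with $\lambda\Lambda$. Concretely, for each $\lambda \in (0, \lambda_0]$ the local oscillatory representations $u_0, \dots, u_6$ of Definition \ref{defn:legdist} (tensored with $|d\lambda/\lambda|^{-1/2}$) reduce to the parametrizations used in \cite{HV1, HV2}, while the polyhomogeneous dependence of the amplitudes at $\lambda = 0$ and at $\lbo, \rbo, \zf, \bfo$ with index family $\mcA$ provides the extra data not present in the fixed-$\lambda$ calculus.

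For the exact sequence, I would work locally in each piece $u_i$. The symbol of $u_1$ in \eqref{u1} is obtained by restricting the amplitude $a$ at $\rho = 0$ to the critical set of $\Psi$ and multiplying by a canonical half-density on $\Lambda$, as in \cite{HV2}. The polyhomogeneous dependence in $\lambda$ transfers directly to give the index set $\mcA_{\bfo}$ on $\Lambda \times \{0\}$; the factor $\sigma^{r_{\lb} - k/2}$ in \eqref{u2} (respectively $\rho^{r_{\lb} - k/2}$ in \eqref{u4}) yields the one-step index set $r_{\lb} - m$ at $\partial_{\lb}\Lambda$, with the analogous statement at $\partial_{\rb}\Lambda$. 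Surjectivity of the symbol map is then proved by realizing any chosen section of $S^{[m]}(\Lambda) \otimes \Omegabh$ with the prescribed polyhomogeneous behavior as the leading amplitude in these parametrizations, and gluing via a partition of unity on $\Lambda \times [0, \lambda_0]$; exactness at the middle term amounts to the standard observation that vanishing of the leading amplitude raises the order by one.

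For the second claim, apply $P - \lambda^2$ directly to one of the parametrizations. As noted in Section \ref{sect:compcotbundle}, $P - \lambda^2 = \rho_{\ff}^2 \cdot \mathcal{D}$ for a second-order $\Vkb$-operator $\mathcal{D}$, so the action preserves the Legendre class and the leading term of the new amplitude is $\sigma_l(P - \lambda^2)(y, y', \sigma, d\Psi)\,a$ times the same exponential factor, giving $\sigma^m((P - \lambda^2)u) = \sigma_l(P - \lambda^2)|_\Lambda \cdot \sigma^m(u)$. The shift of the index family $\mcA$ by $2$ reflects precisely the factor $\rho_{\ff}^2$. When $\sigma_l(P - \lambda^2)$ vanishes on $\Lambda$ the leading amplitude of $(P - \lambda^2)u$ cancels and the subleading amplitude is computed by the standard integration-by-parts argument of \cite[Proposition 3.5]{HV2}. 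It yields a first-order differential operator on the symbol space: the piece $-i \mathcal{L}_{V_l}$ from differentiation along the rescaled Hamilton vector field $V_l$ tangent to $\Lambda$; the algebraic multiplier $-i(\half + m - (2n+1)/4)\nu$ from the action of $V_l$ on the canonical half-density and on the order-bundle factor $|N^*_{\bfc}(\partial \MMkb)|^{m - (2n+1)/4}$ appearing in \eqref{S[m]defn}; and $p_{\mathrm{sub}}$ from the usual subprincipal correction.

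The main obstacle is bookkeeping. The order convention here differs from \cite{HV2} by $1/4$ (Remark after Definition \ref{defn:legdist}) and the density bundle $\Omegakbh$ differs from the one used in \cite{GH1} (Remark in Section \ref{sect:densities}), so one must verify that these two conventional shifts cancel to produce exactly the coefficient $\half + m - (2n+1)/4$ in \eqref{transport}. In addition, one must check that the resulting symbol genuinely lies in $\phgc_{\mcC}(\Lambda \times [0, \lambda_0], \ldots)$ after the transport computation, i.e., that $V_l$ contributes nothing in the $\lambda$ direction that could disrupt the $\mcA_{\bfo}$ expansion. This is immediate because in each parametrization the phase $\Psi$ carries no explicit $\lambda$-dependence on $\Lambda$, so the $\lambda$-dependence resides entirely in the amplitude and is preserved by the operator.
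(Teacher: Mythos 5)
Your proposal follows essentially the same route as the paper: the authors state that the symbol calculus ``follows in a straightforward way'' from Section~3 of \cite{HV2} and give no proof beyond noting typo corrections, and your argument is exactly that reduction of the parametrizations $u_0,\dots,u_6$ to the fixed-$\lambda$ calculus on $\MMb$, carrying the polyhomogeneous $\lambda$-dependence and the $\rho_{\ff}^{2}$ factor along. The details you add (localization, realizing symbols as leading amplitudes, the order-convention and density-bundle bookkeeping --- note the $1/4$ shift is relative to \cite{HW}, not \cite{HV2}) are what the paper implicitly regards as ``straightforward.''
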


\begin{remark} The boundary subprincipal symbol $p_{\mathrm{sub}}$ is defined in \cite[Section 2.1]{HV2}. Here it is sufficient to note that it is a smooth function on $\Nsfstar Z_{\bfc}$ which vanishes on $L^\sharp$. 
\end{remark}

In the next proposition, $ \tilde \Lambda = (\Lambda_0, \Lambda_1)$ is a pair of intersecting Legendre submanifolds as in Proposition 3.2 of \cite{HV2}. We are assume that they do not meet $\Tscstar_{\lb}\MMb$ or $\Tscstar_{\rb}\MMb$. Therefore we are left with the order,  $m$, at $\Lambda_0$, and the index family $\mcA$. We refer to \cite[Section 3.1, equation (3.8)]{HV2} for the definition of the bundle over $\Lambdat$. 

\begin{prop}\label{ex-int} The symbol map on $\Lambdat$ yields an exact sequence
\begin{multline*}
0 \to I^{m+1, \mcA}(\MMkb, \Lambdat; \Omegakbh) \to I^{m,\mcA}(\MMkb, \Lambdat; \Omegakbh) \to \\
\phgc_{\mcA_{\bfo}}(\Lambdat \times [0, \lambda_0], \Omega^\half_b \otimes S^{[m]}) \to 0. 
\end{multline*}
Moreover, if we consider just the symbol map to $\Lambda_1$, there is an exact
sequence
\begin{multline}
0 \to I^{m+1; \mcA}(\MMkb, \Lambdat; \Omegakbh) + I^{m+\half; \mcA}(\MMkb, \Lambda_0; \Omegakbh) \to I^{m;\mcA}(\MMkb, \Lambdat;
\Omegakbh)  \\  
\to  \phgc_{\mcA_{\bfo}}(\Lambda_1 \times [0, \lambda_0], \Omega^\half \otimes S^{[m]}) \to 0.
\label{ex-int-2}\end{multline}
If $u \in I^{m; \mcA}(\MMkb, \Lambdat; \Omegakbh)$, 
then $(P - \lambda^2) u \in I^{m;\mcA}(\MMkb, \Lambdat; \Omegakbh)$ and 
$$
\sigma^m((P - \lambda^2)u) = \sigma_l(P - \lambda^2) \sigma^m(u).
$$
Thus, if the symbol of $P - \lambda^2$ vanishes on $\Lambda_1 \times [0, \lambda_0]$, then $(P - \lambda^2)u$ is an element of
$I^{m+1; \mcA}(\MMkb, \Lambdat;
\Omegakbh) + I^{m+1/2;\mcA}(\MMkb, \Lambda_0; \Omegakbh)$. The symbol of order $m+1$ of $(P - \lambda^2)u$ on $\Lambda_1 \times [0, \lambda_0]$
in this case is given by \eqref{transport}. 
\end{prop}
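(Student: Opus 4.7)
The plan is to reduce each assertion to the corresponding statement on the fixed-energy space $\MMb$ proved in Propositions 3.2--3.5 of \cite{HV2}, treating $\lambda$ as a parameter and tracking polyhomogeneity at $\bfo$.

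First I would establish the exact sequences. By Definition~\ref{defn:intlegdist}, any $u \in I^{m;\mcA}(\MMkb, \Lambdat; \Omegakbh)$ is a finite sum $u_0 + u_1 + u_2 + u_3$; since the Legendrians do not meet the left or right boundaries, the pieces $u_1,u_2$ are ordinary Legendre distributions associated to $\Lambda_0$ or $\Lambda_1$ alone, while $u_3$ is given by the parametrized oscillatory integral \eqref{u3int} with amplitude $a(\lambda,\rho,y,y',\sigma,v,s)$ that is polyhomogeneous conormal in $\lambda$ with respect to $\mcA_{\bfo}$. Away from $\lambda=0$, the symbol map and its surjectivity follow immediately from the $\MMb$ theory applied fibrewise, with smooth dependence on $\lambda$. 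Near $\lambda=0$, the amplitude expansion in $\lambda$ yields, term by term, symbols polyhomogeneous on $\Lambdat \times [0,\lambda_0]$ with index set $\mcA_{\bfo}$ at the $\bfo$-corner; summing asymptotically (an application of Borel's lemma, which preserves $\mcA_{\bfo}$) gives surjectivity and identifies the kernel precisely with $I^{m+1;\mcA}$. The second exact sequence \eqref{ex-int-2} is obtained the same way but restricting the symbol to $\Lambda_1$: an element whose symbol on $\Lambda_1$ vanishes can, by the stationary phase/Malgrange-type argument in \cite[Prop. 3.4]{HV2}, be rewritten as a sum of an element of $I^{m+1;\mcA}(\MMkb,\Lambdat;\Omegakbh)$ and an element of $I^{m+1/2;\mcA}(\MMkb,\Lambda_0;\Omegakbh)$, and conversely the latter two classes have vanishing $\Lambda_1$-symbol. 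The polyhomogeneity in $\lambda$ is preserved at each step since all manipulations occur on the amplitudes.

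Next I would treat the action of $P-\lambda^2$. For fixed $\lambda>0$, \cite[Prop. 3.5]{HV2} gives $(P-\lambda^2)u \in I^{m;\mcA}(\MMkb,\Lambdat;\Omegakbh)$ with principal symbol $\sigma_l(P-\lambda^2)\,\sigma^m(u)$. Since the left symbol $\sigma_l(P-\lambda^2)=\nu^2+h-\lambda^2$ depends polynomially (hence smoothly) on $\lambda$, and since $P-\lambda^2$ is a differential operator lifted from the left factor (so it acts on the amplitudes $a(\lambda,\cdot)$ by differentiation in variables other than $\lambda$), the action preserves the polyhomogeneous structure at $\bfo$ with index family $\mcA$. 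If additionally $\sigma_l(P-\lambda^2)$ vanishes on $\Lambda_1\times[0,\lambda_0]$ (it automatically vanishes on $\Lambda_0$ by construction), then the principal symbol on $\Lambda_1$ of $(P-\lambda^2)u$ vanishes; by the second exact sequence just established, $(P-\lambda^2)u$ is a sum of elements in $I^{m+1;\mcA}(\MMkb,\Lambdat;\Omegakbh)$ and $I^{m+1/2;\mcA}(\MMkb,\Lambda_0;\Omegakbh)$. The computation of the refined symbol on $\Lambda_1$ is local and is obtained by the same stationary-phase expansion as in \cite[Prop. 3.5]{HV2}: the leading term in $\rho$ from applying $P-\lambda^2$ to \eqref{u3int} comes from the Hamilton vector field $V_l$ acting on the amplitude plus a scalar multiplication term arising from differentiating $\rho^{m-(k+1)/2+n/2}$ and from the boundary subprincipal symbol, giving exactly \eqref{transport}.

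The one point requiring care, and which I expect to be the main technical nuisance rather than a genuine obstacle, is the consistency between the three representations $u_1,u_2,u_3$ near the intersection $\Lambda_0\cap\Lambda_1$: one must verify that the symbol map is well defined modulo $I^{m+1}$ and that the sum of symbols obtained from the three pieces transforms correctly. This is the $\lambda$-parametrized analogue of the invariance statement in \cite[Sec. 3]{HV2}, and it follows by the same partition-of-unity and change-of-parametrization arguments, using only that the transition formulas for amplitudes are polynomial (hence preserve polyhomogeneity in $\lambda$ at $\bfo$). I would therefore omit the calculational details and refer to the corresponding proof in \cite{HV2}.
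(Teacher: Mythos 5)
The paper itself gives no explicit proof of this proposition: it states in Section~\ref{sect:symbolcalc} that ``the symbol calculus follows in a straightforward way from that given for Legendrian distributions on $\MMb$ given in \cite{HV2}'' and that the propositions ``follow straightforwardly from the corresponding results in Section 3 of \cite{HV2}.'' Your proposal carries out exactly this reduction — applying the $\MMb$ propositions fibrewise in $\lambda$, tracking polyhomogeneity of the amplitudes at $\bfo$ via the expansion and Borel summation, and invoking the \cite{HV2} stationary-phase argument for the transport equation — so it is the argument the paper intends, merely written down rather than left implicit.
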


In the last of these propositions, $\Lambdat$ is an intersecting pair of Legendre submanifolds $(\Lambda, \Lambdas)$ with conic points, as defined above. Now $\Lambdah$ is a manifold with codimension 2 corners since the blowup \eqref{lambdah} creates a new boundary hypersurface at the intersection with $\Lambda^\sharp$, which we denote $\partial_\sharp \Lambdah$. So $\Lambdah \times [0, \lambda_0]$ has codimension 3 corners. We define the index family $\mcC'$ for $\Lambdah \times [0, \lambda_0]$ to be that
which assigns $\mcA_{\bfo}$ at $\Lambdah \times \{ 0 \}$, the one-step index set $r_{\lb} - m$ at $\partial_{\lb} \Lambdah \times [0, \lambda_0)$, the one-step index set $r_{\rb} - m$ at $\partial_{\rb} \Lambdah \times [0, \lambda_0)$,
and the one-step index set $p - m$ at $\partial_{\sharp} \Lambdah$. 

\begin{prop}\label{ex-conic} There is an exact sequence
\begin{multline}
0 \to I^{m+1,p; r_{\lb}, r_{\rb}; \mcA}(\MMkb, \Lambdat; \Omegakbh) \to I^{m,p; r_{\lb}, r_{\rb}; \mcA}(\MMkb, \Lambdat; \Omegakbh) \\
\to \phgc_{\mcC'}(\Lambdah \times [0, \lambda_0], \Omega^\half_b \otimes S^{[m]}(\Lambdah))
\to 0. 
\end{multline}
If $u \in I^{m,p; r_{\lb}, r_{\rb}; \mcA}(\MMkb, \Lambdat; \Omegakbh)$, 
then $(P - \lambda^2)u \in I^{m,p; r_{\lb}, r_{\rb}; \mcA}(\MMkb, \Lambdat; \Omegakbh)$ and 
$$
\sigma^m((P - \lambda^2)u) = \sigma_l(P - \lambda^2) \sigma^m(u).
$$
If the symbol of $P - \lambda^2$ vanishes on $\Lambda$, then $(P - \lambda^2)u \in I^{m+1,p; r_{\lb}, r_{\rb}; \mcA}(\MMkb, \Lambdat;
\Omegakbh)$. The symbol of order $m+1$ of $(P - \lambda^2)u$ in this case is given by
\eqref{transport}. 
\end{prop}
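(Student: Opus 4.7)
The plan is to reduce Proposition~\ref{ex-conic} to the corresponding results on $\MMb$ proved in Section 3 of \cite{HV2}, treating $\lambda$ as a parameter, and then to verify that all constructions extend polyhomogeneously down to the new boundary hypersurface $\bfo$. For $\lambda \geq \epsilon > 0$, a distribution $u$ as in Definition~\ref{defn:coniclegdist}, after stripping off the $|d\lambda/\lambda|^{1/2}$ factor, is a smooth $\lambda$-family of intersecting conic Legendrian pairs on $\MMb$ associated to $(\lambda\Lambda, \lambda\Lambdas)$ in the sense of \cite{HV2}, and the principal symbol machinery from that paper applies pointwise in $\lambda$.

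For the exact sequence, I would define the principal symbol locally from each of the six parametrizations $u_0,\dots,u_5$. At a point of $\Lambdah$ away from $\partial_\sharp\Lambdah \cup \partial_{\lb}\Lambdah \cup \partial_{\rb}\Lambdah$ and away from $\bfo$, the leading amplitude of $u_3$ at $\rho=0$, evaluated at the critical points of $\Psi$ and tensored with the natural Leray half-density on $\Lambdah$, defines a section of the symbol bundle $S^{[m]}(\Lambdah)$ from \eqref{S[m]defn}; invariance under the choice of parametrization is the standard H\"ormander argument used in \cite{HV2}. For the boundary behavior, the factor $s^{p+n/2-1}$ in \eqref{u3con} produces the one-step index set $p-m$ at $\partial_\sharp\Lambdah$, the factors $\sigma^{r_{\bullet}}$ in \eqref{u4con} produce $r_{\bullet} - m$ at $\partial_{\bullet}\Lambdah$, and the polyhomogeneity of $a$ in $\lambda$ produces $\mcA_{\bfo}$ at $\Lambdah \times \{0\}$. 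Surjectivity follows by quantizing a prescribed symbol via the parametrization formulas, and exactness in the middle follows the standard argument: if the symbol vanishes, the leading amplitude can be absorbed into a one-order-smoother error, gaining a factor of $\rho$ (or $\rho'$, or $s$) and placing $u$ in the order-$(m+1)$ space.

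Next I would verify the action of $P - \lambda^2$. Since $P - \lambda^2 \in \rho_{\ff}^2 \Diff^2_{k,b}(\MMkb)$ as noted in Section~\ref{sect:compcotbundle}, it preserves the conic-Legendrian class. The symbolic identity $\sigma^m((P-\lambda^2)u) = \sigma_l(P-\lambda^2)\sigma^m(u)$ is the standard stationary-phase calculation: $e^{i\Psi/\rho}$ has effective frequency $1/\rho$ in the $(\rho,y)$ directions, so applying $P-\lambda^2$ reproduces its left symbol evaluated on the Legendrian, plus lower-order corrections. When $\sigma_l(P-\lambda^2)$ vanishes on $\Lambda$, the leading term vanishes identically; writing $\sigma_l(P-\lambda^2) = \rho\,\tilde q + O(\rho^2)$ near $\Lambda$ and integrating by parts against $\Psi$ produces an extra factor of $\rho$, lifting $(P-\lambda^2)u$ to order $m+1$. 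The transport equation \eqref{transport} then emerges from the subleading term via the standard Duistermaat computation: $\mathcal{L}_{V_l}$ from differentiation of the amplitude along the Hamilton flow, the scalar $(\tfrac12 + m - \tfrac{2n+1}{4})\nu$ from the interplay of the half-density normalization with the weight $\rho^{m-(k+1)/2+n/2}$, and $p_{\mathrm{sub}}$ from the subprincipal symbol of $P-\lambda^2$ at $\bfc$.

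The main obstacle is ensuring that the six support types $u_0,\dots,u_5$ in Definition~\ref{defn:coniclegdist} produce a single, consistent symbol section on $\Lambdah \times [0, \lambda_0]$ --- in particular at the triple corner $\bfc \cap \bfo \cap \lb$, where $u_4$ is written with $\rho'$-scaling rather than $\rho$-scaling, and at the new conic face $\partial_\sharp\Lambdah$, where the $ds$-integration in \eqref{u3con}, \eqref{u4con} must combine with the factor $s^{p+n/2-1}$ to yield exactly the one-step index set $p-m$ predicted by the exact sequence. Once this compatibility is established by changes of variable in the oscillatory integrals --- using that $\rho/s$ and $\rho'/s$ agree at leading order when $\sigma$ is bounded, and that the half-density conversion between \eqref{u3con} and \eqref{u4con} involves only smooth positive factors --- the remainder of the argument runs in parallel with the pointwise-in-$\lambda$ statements of \cite[Propositions 3.4, 3.5]{HV2}.
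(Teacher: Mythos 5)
Your proposal is correct and takes essentially the same route as the paper, which gives no proof at all for Proposition~\ref{ex-conic} but declares it to ``follow straightforwardly from the corresponding results in Section 3 of \cite{HV2}''; your outline makes that claim explicit by treating $\lambda$ as a parametrized family, verifying that the $\lambda$-polyhomogeneity of the amplitudes in \eqref{u3con}--\eqref{u4con} yields the index set $\mcA_{\bfo}$ at $\Lambdah\times\{0\}$ and the one-step index sets $p-m$, $r_{\lb}-m$, $r_{\rb}-m$ at the other boundary hypersurfaces of $\Lambdah\times[0,\lambda_0]$, and observing that $P-\lambda^2$ vanishes to second order at $\bfo$ so that the transport equation \eqref{transport} is obtained exactly as in \cite[Proposition 3.5]{HV2}. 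The only point worth emphasizing, which you correctly flag as the ``main obstacle,'' is the consistency of the symbol across the overlapping parametrizations $u_3,u_4,u_5$ at the triple corner $\bfc\cap\bfo\cap\lb$, where the $\sigma^{r_{\rb}}$ factor and the $\rho'/s$-scaling must match across charts; this is indeed handled by the change-of-variables argument you describe and is the part that requires care but no new idea beyond \cite{HV2}.
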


We next consider the operation of restricting to $\bfo$. We are mainly interested in this in a neighbourhood of $\bfc$, so for simplicity we assume that the index family $\mcA$ is such that the index sets at $\lbo, \rbo$ and $\zf$ are empty, i.e.\ the half-densities vanish rapidly at these faces. We also assume for simplicity that the index set $\mcA_{\bfo}$ satisfies
$$
\mcA_{\bfo} = (b, 0) \cup \mcA'_{\bfo}$$ 
with $\mcA'_{\bfo} \geq b + \epsilon$ for some $\epsilon > 0$. 
Let $\mcA'$ denote $\mcA$ with $\mcA_{\bfo}'$ substituted for $\mcA_{\bfo}$. Also recall from Section~\ref{sec:bfo} that a Legendre submanifold 
$\Lambda$ for $\MMkb$ induces one, also denoted $\Lambda$, for $\bfo$. Here $\Lambda$ could be a smooth Legendre submanifold, an intersecting pair of Legendre submanifold, or a Legendre conic pair. 
\begin{prop}\label{bfo-rest} Assume that $\mcA$ satisfies the conditions above. Then there is an exact sequence
\begin{multline*}
0 \to I^{m, r_{\lb}, r_{\rb}; \mcA'}(\MMkb, \Lambda; \Omegakbh) \to I^{m, r_{\lb}, r_{\rb}; \mcA}(\MMkb, \Lambda; \Omegakbh) \to  \\
I^{m,  r_{\lb}, r_{\rb}; \emptyset}(\bfo, \Lambda; \Omegahbsc) \to 0
\end{multline*}
where the last map on the first line is multiplication by $\lambda^{-b}$ and restriction to $\bfo$, and the empty set in the exponent of $I^{m,  r_{\lb}, r_{\rb}; \emptyset}(\bfo, \Lambda; \Omegakbh)$ indicates rapid vanishing at $\lbo, \rbo, \zf$. 
\end{prop}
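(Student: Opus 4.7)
The strategy is to verify the three assertions (composition zero, injectivity modulo $I^{m,r_{\lb},r_{\rb};\mcA'}$, and surjectivity) by a term-by-term analysis using the local representations of Definition~\ref{defn:legdist} (and, in the intersecting and conic cases, Definitions~\ref{defn:intlegdist} and \ref{defn:coniclegdist}). The starting point is the observation that, under the hypothesis $\mcA_{\bfo} = (b,0) \cup \mcA'_{\bfo}$ with $\mcA'_{\bfo} \ge b + \epsilon$, every polyhomogeneous amplitude $a(\lambda,\cdot)$ appearing in a local representation decomposes uniquely as $a = \lambda^{b}(a_{0} + \tilde a)$, where $a_{0}$ is smooth in the remaining variables and independent of $\lambda$, and $\tilde a$ vanishes at $\lambda=0$ to order at least $\epsilon$. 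Combined with the fact noted after Definition~\ref{defn:legdist} that $\lambda^{n}|dg\,dg'\,d\lambda/\lambda|^{1/2}$ is a smooth non-vanishing section of $\Omegakbh$ near $\bfc$, multiplication by $\lambda^{-b}$ and restriction to $\bfo$ converts each local representation \eqref{u1}--\eqref{u2} (and the analogous ones in Definitions~\ref{defn:intlegdist}, \ref{defn:coniclegdist}) into the same oscillatory integral with $\lambda$ set to zero. By the identification in Section~\ref{sec:bfo} of ${}^{b,k}T^{*}_{\bfc\cap\bfo}\bfo$ with $\Nsfstar Z_{\bfc}$ as contact manifolds, this oscillatory integral is precisely a local representation of a Legendre distribution on $\bfo$ associated to $\Lambda$.

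Next I would address the remaining terms in the decomposition $u=\sum u_{j}$. The term $u_{0}$, supported in $\{\lambda\geq\epsilon\}$, contributes nothing. The terms $u_{4}, u_{5}, u_{6}$ are supported away from $\bfc$; under the empty-index-set hypotheses at $\lbo, \rbo, \zf$, each is rapidly vanishing at the corresponding boundary hypersurfaces of $\MMkb$, and hence $\lambda^{-b}u_{j}\rest_{\bfo}$ is rapidly vanishing at every boundary hypersurface of $\bfo$ other than $\bfc\cap\bfo$. Summing, the image of $u\mapsto \lambda^{-b}u\rest_{\bfo}$ lies in $I^{m,r_{\lb},r_{\rb};\emptyset}(\bfo,\Lambda;\Omegahbsc)$. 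The kernel of this map is by construction the subspace of those $u$ whose leading coefficient $a_{0}$ vanishes identically at $\bfo$, i.e.\ those with index set at $\bfo$ contained in $\mcA'_{\bfo}$; this is by definition $I^{m,r_{\lb},r_{\rb};\mcA'}(\MMkb,\Lambda;\Omegakbh)$.

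Surjectivity I would establish by construction. Given $v\in I^{m,r_{\lb},r_{\rb};\emptyset}(\bfo,\Lambda;\Omegahbsc)$, choose local-representation data $(\Psi_{\alpha}, a_{0,\alpha})$ parametrizing $v$ via a partition of unity as in Section~\ref{sec:bfo}. Using a product collar structure on $\MMkb$ near $\bfo$ and a cutoff in $\lambda$, extend each phase and amplitude to $\lambda$-independent objects in a neighborhood of $\bfo$, multiply the resulting sum by $\lambda^{b}$, and reinterpret $\lambda^{n}|dg\,dg'\,d\lambda/\lambda|^{1/2}$ as a section of $\Omegakbh$. The result is a Legendre distribution on $\MMkb$ in $I^{m,r_{\lb},r_{\rb};\mcA}$ whose image under $\lambda^{-b}(\cdot)\rest_{\bfo}$ is $v$ by construction.

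The main obstacle, and the step requiring the most care, is well-definedness of the map: the local representations of a given Legendrian are highly non-unique, so one must check that the leading coefficient at $\bfo$ is invariantly defined. This is resolved exactly as in the symbol calculus of \cite[Section~3]{HV2}, by appealing to the principal symbol exact sequence of Proposition~\ref{ex} (and its analogues, Propositions~\ref{ex-int} and \ref{ex-conic}). The value $a_{0}$ computed from any local representation of $u$ is the restriction to $\Lambda\times\{0\}$ of the invariantly defined principal symbol $\sigma^{m}(u)\in\phgc_{\mcC}(\Lambda\times[0,\lambda_{0}],\Omega^{\half}_{b}\otimes S^{[m]}(\Lambda))$. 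Two representations of the same $u$ therefore have $a_{0}$'s differing by a term vanishing at $\lambda=0$, so $\lambda^{-b}u\rest_{\bfo}$ is well defined as an element of $I^{m,r_{\lb},r_{\rb};\emptyset}(\bfo,\Lambda;\Omegahbsc)$. Once this invariance is in hand, the three pieces above assemble into the claimed exact sequence, with verbatim variants for the intersecting and conic cases via Definitions~\ref{defn:intlegdist}--\ref{defn:coniclegdist}.
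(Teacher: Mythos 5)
The paper simply states Proposition~\ref{bfo-rest} and provides no proof (it is treated, like Propositions~\ref{ex}--\ref{ex-conic}, as a routine consequence of the definitions and the framework of \cite{HV2}), so there is no paper proof to compare against; but your proposal is essentially the argument the authors would have written out, and it is sound. The core identifications -- decomposing the amplitude as $a = \lambda^b(a_0 + \tilde a)$ under the hypothesis on $\mcA_{\bfo}$, noting that the phases $\Psi$ in \eqref{u1}--\eqref{u4} are already $\lambda$-independent so that restricting to $\bfo$ simply sets $\lambda = 0$ in the amplitude, and invoking the canonical isomorphism of contact manifolds from Section~\ref{sec:bfo} to identify the restricted oscillatory integral as a Legendre distribution on $\bfo$ -- are all correct, and your surjectivity construction via a collar extension is the right one.

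Two small imprecisions worth flagging. First, your concern about ``well-definedness'' is somewhat over-engineered: the map $u \mapsto \lambda^{-b}u\rest_{\bfo}$ is automatically well defined on any polyhomogeneous conormal half-density with the stated index set at $\bfo$, because it is defined at the level of $u$ itself and not at the level of local representations; the substantive thing to verify is only that the image has the claimed Legendrian structure on $\bfo$, and this is exactly what the term-by-term analysis accomplishes without any appeal to the symbol exact sequence (the appeal you make to Proposition~\ref{ex} is a valid sanity check but is not needed for well-definedness). Second, your statement that $\lambda^{-b}u_j\rest_{\bfo}$ for $j = 4,5,6$ is ``rapidly vanishing at every boundary hypersurface of $\bfo$ other than $\bfc \cap \bfo$'' is too strong: the restrictions of $u_4$ and $u_5$ are genuinely Legendrian (not rapidly vanishing) at the faces $\lb \cap \bfo$ and $\rb \cap \bfo$, and this is precisely why the target space $I^{m, r_{\lb}, r_{\rb}; \emptyset}(\bfo, \Lambda; \Omegahbsc)$ carries the orders $r_{\lb}, r_{\rb}$; the rapid vanishing implied by the empty index family holds only at $\lbo \cap \bfo$, $\rbo \cap \bfo$, and $\zf \cap \bfo$. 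Neither of these affects the validity of the conclusion.
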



\section{The resolvent for a metric cone}\label{exactcone}
Let $(Y, h)$ be a closed Riemannian manifold of dimension $n-1$, and let $C(Y)$ denote the metric cone over $Y$; that is, the manifold $(0, \infty) \times Y$ with Riemannian metric $\gconic = dr^2 + r^2 h$. This metric is singular at $r=0$, except in the special case that $(Y, h)$ is  $S^{n-1}$ with its canonical metric, in which case $C(Y)$ is Euclidean space minus one point, with its standard metric (expressed `in polar coordinates'). 

In this section we analyze the operator $\Pconic = \Delta_{\conic} + V_0 r^{-2}$ on $C(Y)$, where $V_0$ is a smooth function of $y \in Y$ satisfying 
\begin{equation}
\Delta_{Y}+\frac{(n-2)^2}{4}+V_0>0, 
\label{posop}\end{equation}
as in \eqref{hyp2}. As is evident from \eqref{Pb}, under this condition $\Pconic$ is a positive operator. Acting initally with domain $C_c^\infty((0, \infty) \times Y)$, it is essentially self-adjoint in dimensions $n \geq 4$; for $n=3$ we use the Friedrichs extension of the corresponding quadratic form. 
We construct the resolvent kernel $(\Pconic - (1 + i0))^{-1}$. This problem has been considered previously, e.g.\ in \cite{Callias}, \cite{BS} and \cite{CT}; here we use an essentially microlocal approach based on the theory of Legendre distributions. 
\

The operator $\Pconic$ on $C(Y)$ as a differential operator has the form 
\begin{equation}
\Pconic = - \partial_r^2 - \frac{n-1}{r} \partial_r + \frac1{r^2} \Delta_Y + \frac{V_0(y)}{r^2} .
\label{conicSchr}\end{equation}
In terms of the variable $x = 1/r$, this reads
$$
-(x^2 \partial_x)^2 + (n-1) x^3 \partial_x + x^2 \Delta_Y + x^2 V_0(y) ,
$$
and is an elliptic scattering differential operator near $x=0$. In the remainder of this section we  regard this operator as acting on half-densities, using the flat connection on half-densities that annihilates the Riemannian half-density. Now let 
\begin{equation}
\Pbconic = r \, \Pconic \ r
\label{Pb1} \end{equation}
and compute
\begin{equation*}\begin{gathered}
\Big( r \Pconic r \Big)  \Big( f |d\gcyl|^{1/2} \Big) \\ = 
r^{-n/2} \Big( r^{1 + n/2} \big( - \partial_r^2 - \frac{n-1}{r} \partial_r + \frac1{r^2} \Delta_Y + \frac{V_0(y)}{r^2} \big) r^{1-n/2} \Big)  \Big( f |d\gconic|^{1/2} \Big) \\
= \Big( \big( -(r\partial_r)^2 + \Delta_Y + V_0 +(n/2 - 1)^2 \big) f \Big)  |d\gcyl|^{1/2}
\end{gathered}\end{equation*}
from which we deduce that, using the connection  on the half-density bundle which annihilates the \emph{cylindrical} half-density $|d\gcyl|^{1/2} = |dr/r dh|^{1/2}$, 
\begin{equation}
\Pbconic = -(r\partial_r)^2 + \Delta_Y + V_0 +(n/2 - 1)^2 .
\label{Pb}\end{equation}
Hence, after pre- and post-multiplying by $r$, our operator is equivalent to an elliptic b-differential operator $\Pbconic$ endowed with the flat connection that annihilates $|d\gcyl |^{1/2}$.  It follows from this formula for $\Pbconic$ that we can separate the $r$ and $y$ variables and express the resolvent kernel $(\Pbconic + k^2r^2)^{-1}$, for $k > 0$, as an infinite sum
\begin{equation}\label{metric-cone-k}
 \sum_{j=0}^\infty\Pi_{E_j}(y,y')\Big(I_{\nu_j}(kr)K_{\nu_j}(kr')H(r'-r)+I_{\nu_j}(kr')K_{\nu_j}(kr)H(r-r')\Big) \left|\frac{dr \, dr'}{rr'}\right|^\demi
\end{equation}
where $I_\nu, K_\nu$ are modified Bessel functions (see \cite[Section 4]{GH1}) and $H$ is the Heaviside function. This formula analytically continues to the imaginary axis; setting $k = -i$, and using the formulae
$$
I_\nu (-iz) = e^{-\nu \pi i/2} J_\nu (z), \quad 
K_\nu(-iz) = \frac{\pi i}{2} e^{\nu \pi i/2} \Ha^{(1)}_\nu(z),
$$
we see that the kernel of $(\Pconic - (1 + i0)^2)^{-1}$ is 
\begin{equation}\label{metric-cone}
\frac{\pi i  r r'}{2}  \sum_{j=0}^\infty\Pi_{E_j}(y,y')\Big(J_{\nu_j}(r)\Ha^{(1)}_{\nu_j}(r')H(r'-r)+J_{\nu_j}(r')\Ha^{(1)}_{\nu_j}(r)H(r-r')\Big) \left|\frac{dr \, dr'}{rr'}\right|^\demi
\end{equation}
where $\Pi_{E_j}$ is projection on the $j$th eigenspace $E_j$ of the operator $\Delta_Y + V_0 + (n/2 - 1)^2$ (on half-densities) on $Y$ and $\nu_j^2$ is the corresponding eigenvalue; also $J_\nu, \Ha_{\nu}^{(1)}$ are standard Bessel and Hankel functions. 
 This expression converges only distributionally, and is of very little help in revealing the asymptotic behaviour of the kernel, say as both $r$ and $r'$ tend to  $\infty$. For the purposes of this paper, we need very precise information on the kernel in this region. Therefore we give a different construction, based on the construction for scattering metrics in \cite{HV2} and \cite{GH1} together with the construction for b-metrics in \cite{APS}. First we define compactifications of $C(Y)$ and $C(Y)^2$, on which the construction takes place. 
 
\subsection{Compactifications of $C(Y)$ and $C(Y)^2$}\label{sect:coniccomp}
We begin by defining compactifications of $C(Y)$ and $C(Y)^2$. 
These constructions are parallel to those in Section~\ref{5.1} for $M \times [0, \lambda_0]$. 

Let us compactify $C(Y)$ to  $Z = [0, \infty]_r \times Y$, where we use $[0, \infty]_r$ to denote the one-point compactification of $[0, \infty)_r$ with boundary defining function $x = 1/r$ at $r = \infty$. As we have seen, $Z$ is the same as the boundary hypersurface $\ff$ of $\Mkb$.
We denote the boundary hypersurfaces of $Z$ at $r=0$ and $r=\infty$ by $\partial_0 Z$ and $\partial_\infty Z$, respectively. To define the double space, we start from $Z^2$ and perform a `b-blowup';
that is, we blow up the codimension 2 corners of $Z^2$ that meet the diagonal, yielding the b-double product $Z^2_b$:
\begin{equation}
Z^2_b = \big[Z^2; \partial_0 Z \times \partial_0 Z; \partial_\infty Z \times \partial_\infty Z \big].
\label{Z2bdefn}\end{equation}
Let $\Diagb(Z)$ denote the lift of the diagonal submanifold to $Z^2_b$. 
We then perform a `scattering blowup' near $r = \infty$. Specifically, we blow up the boundary $\partial_\infty \Diagb(Z)$ of $\Diagb(Z)$ lying over $r = r' = \infty$, obtaining a space we call $\ZZbsc$:
\begin{equation}
\ZZbsc = \big[Z^2; \partial_0 Z \times \partial_0 Z; \partial_\infty Z \times \partial_\infty Z; \partial_\infty \Diagb(Z) \big].
\label{Z2bscdefn}\end{equation}
\begin{figure}[ht!]
\begin{center}
\input{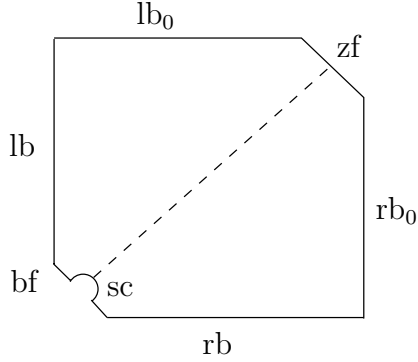}
\caption{The manifold $Z^2_{b,\sca}$; the dashed line is the lifted diagonal of $Z^2$. The coordinate $r$ vanishes at $\zf$ and $\rbo$, while $r'$ vanishes at $\zf$ and $\lbo$. It is canonically isomorphic to the face $\bfo$ in Figure~\ref{fig:mmksc}.}
\label{Z2bsc}
\end{center}
\end{figure}

If $Y = \partial M$, then $\ZZb$ is canonically diffeomorphic to the boundary hypersurface $\bfo$ of $\MMkb$, and $\ZZbsc$ is canonically diffeomorphic to the boundary hypersurface $\bfo$ of $\MMksc$. Accordingly, we label the boundary hypersurfaces of $\ZZb$ and $\ZZbsc$ consistently with those of $\MMkb$ and $\MMksc$: the boundary hypersurfaces of $Z^2$ at $r'=0, r=\infty, r=0, r' = \infty$ will be denoted $\lbo, \lb, \rbo, \rb$ respectively,\footnote{This is not a typo; it is really the case that $\lbo$ corresponds to $r' = 0$ and $\rbo$ corresponds to $r=0$; see figure.} and the boundary hypersurfaces created by blowing up $\partial_0 Z \times \partial_0 Z$,  $\partial_\infty Z \times \partial_\infty Z$, and $ \partial_\infty \Diagb(Z)$ will be denoted by  $\zf, \bfc$ and (in the case of $\ZZbsc$) $\sca$, respectively. The lift of $\Diagb(Z)$ to $\ZZbsc$ we denote $\Diagbsc(Z)$. 

\subsection{Statement of main result for metric cones}
Recall (from Section~\ref{sec:bfo}) that there is a natural identification between
$\Nsfstar Z_{\bfc}$ and ${}^{b,k} T^*_{\bfc \cap \bfo} \bfo$ (where $\bfo$ here indicates the boundary  hypersurface of $\MMkb$) . Also, we noted above that $\bfo$ is naturally isomorphic to $\ZZb$. 
Consequently, the Legendre submanifolds $\Nsfstar \Diagb$, $L_{\pm}$ and $L^\sharp$ introduced in Section~\ref{sec:legsub} induce Legendre submanifolds in ${}^{b,k} T^*_{\bf} \ZZb$. To avoid excessive notation, these Legendre submanifolds of ${}^{b,k} T^*_{\bf} \ZZb$ will be denoted by the same symbols. In terms of these, the  main result of this section is 
 
\begin{theo}\label{conic} The kernel of $(\Delta_{\conic} + V_0 r^{-1} - (1 + i0))^{-1}$ is the sum of four terms $R_1 + R_2 + R_3 + R_4$, where 
\begin{itemize}
\item
$R_1$ is a pseudodifferential operator on $\ZZbsc$ (in the b-calculus near $\partial_0 \Diagbsc$, and in the scattering calculus near $\partial_\infty \Diagbsc$), supported near $\Diagbsc$ and vanishing to second order at $\zf$; 
\item 
$R_2 \in I^{-1/2}(\ZZb, (\Nscstar \Diagb, L_+); 
\scOh)$ is an intersecting Legendre distribution of order $-1/2$, supported near $\partial_\infty \Diagb$; 
\item
$R_3 \in I^{-1/2, p; r_{\lb}, r_{\rb}}(\ZZb, (L_+, L^\sharp); \scOh)$ is a Legendre distribution associated to the intersecting pair of Legendre submanifolds with conic points $(L_+, L^\sharp)$, with $p = (n-2)/2$, $r_{\lb} = r_{\rb} = (n-1)/2$, supported near $\bfc$; and 
\item $R_4$ is supported away from $\bfc$ and is such that $e^{-ir} e^{-ir'} R_4$ is polyhomogeneous conormal on $\ZZb$ vanishing to order  $2$ at $\zf$, $n/2$ at $\lbo$ and $\rbo$, and $(n-1)/2$ at $\lb$ and $\rb$.
\end{itemize} 
\end{theo}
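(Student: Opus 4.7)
The plan is to construct the outgoing resolvent $R(1+i0)$ by a parametrix argument based on the Legendre symbol calculus of Section~\ref{sect:symbolcalc}, using the explicit separation-of-variables formula \eqref{metric-cone} as a cross-check for orders and leading coefficients. The four pieces correspond geometrically to: the diagonal singularity ($R_1$); propagation of this singularity along the Hamilton flow of $\sigma_l(\Pconic - 1)$ at $\bfc$ ($R_2$); extension across the conic point where $L_+$ meets the outgoing Legendrian at infinity ($R_3$); and a polyhomogeneous conormal remainder capturing the boundary asymptotics at $\zf, \lbo, \rbo, \lb, \rb$ ($R_4$).

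First I would build $R_1$ as a standard elliptic parametrix for $\Pconic - 1$ near the diagonal. Under the reduction \eqref{Pb1} to the b-operator $\Pbconic$, the operator is elliptic in the b-calculus near $\partial_0 \Diagbsc$ and in the scattering calculus near $\partial_\infty \Diagbsc$, so a routine symbolic construction gives $(\Pconic - 1) R_1 = \Id + E_1$ with $E_1$ smoothing along the diagonal and microsupported on the characteristic variety $\{\nu^2 + h = 1\}$ near $\partial_\infty \Diagb$ inside $\Nsfstar Z_{\bfc}$. Proposition~\ref{ex-int} then delivers $R_2 \in I^{-1/2}(\ZZb, (\Nscstar \Diagb, L_+); \scOh)$ whose principal symbol on $\Nscstar \Diagb$ cancels $E_1$ and whose symbol on $L_+$ solves the transport equation \eqref{transport} driven by $V_l$ from \eqref{Vl}. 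On each leaf $\gamma^2$ of \eqref{gamma^2} this transport equation reduces to an ODE of the form $-i\sin s\, \partial_s \sigma + \cdots = 0$, and choosing the $+$ flow direction encodes the $+i0$ prescription.

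Because every geodesic on $(Y,h)$ has arc length at most $\pi$, the propagation on $L_+$ terminates in a conic singularity where it meets $L^\sharp_+$. I would cross this conic point via Proposition~\ref{ex-conic} to extend $R_2$ to $R_3 \in I^{-1/2, p; r_{\lb}, r_{\rb}}(\ZZb, (L_+, L^\sharp_+); \scOh)$. The orders $p = (n-2)/2$ and $r_{\lb} = r_{\rb} = (n-1)/2$ are forced by matching the standard outgoing oscillation $r^{-(n-1)/2} e^{ir}$ against the half-density conventions of Section~\ref{sect:densities}; they are also read off directly from the leading Hankel asymptotic $\Ha^{(1)}_\nu(r) \sim (2/\pi r)^{1/2} e^{i(r - \nu\pi/2 - \pi/4)}$ in \eqref{metric-cone}. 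After subtracting $R_1 + R_2 + R_3$, the residual is supported away from $\bfc$, and I would construct $R_4$ by iteratively solving indicial equations for $\Pbconic$: at $\zf$ using that $\Pbconic$ separates as $-(r\partial_r)^2 + \nu_j^2 - r^2$ on the $j$-th eigenspace, so that the indicial roots determine the index set; at $\lbo, \rbo$ by matching with the $J_{\nu_j}$ expansions of \eqref{metric-cone}; and at $\lb, \rb$ by propagating with the outgoing exponential. The claimed orders $2$ at $\zf$, $n/2$ at $\lbo, \rbo$, $(n-1)/2$ at $\lb, \rb$ agree with the power-series and asymptotic expansions of $J_{\nu_j}, \Ha^{(1)}_{\nu_j}$ after accounting for the half-density normalizations of Section~\ref{sect:densities}.

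The main obstacle is symbol matching at the interfaces between the four pieces --- in particular (i) verifying that the symbol propagated along $L_+$ arrives at $L^\sharp_+$ with precisely the leading coefficient dictated by the outgoing Hankel asymptotic, including the correct Maslov factor and half-density trivialisation; and (ii) verifying that $R_4$ has empty index set at $\bfc$, meaning all $\bfc$-oscillation has been absorbed into $R_2 + R_3$. Both reduce to principal-symbol computations on $\Nsfstar Z_{\bfc}$ which can be checked against \eqref{metric-cone}. To close the argument, uniqueness of the outgoing resolvent --- which exists by limiting absorption using the positivity \eqref{posop} --- forces any parametrix of the stated form satisfying $(\Pconic - 1) R = \Id$ modulo a compactly supported smoothing error to coincide with $R(1 + i0)$ up to a correction that can be absorbed into $R_4$ without disturbing its prescribed orders of vanishing.
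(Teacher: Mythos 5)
Your parametrix construction broadly follows the paper's strategy for $R_1$, $R_2$, $R_3$ (interior symbolic parametrix, then intersecting Legendrian via Proposition~\ref{ex-int}, then the Legendre conic pair via Proposition~\ref{ex-conic}), and your reading of the orders off the Hankel asymptotics is fine. But there is a genuine gap in your final paragraph, and it sits precisely where the paper's proof is most substantial.

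You write that ``uniqueness of the outgoing resolvent \dots forces any parametrix of the stated form satisfying $(\Pconic - 1)R = \Id$ modulo a compactly supported smoothing error to coincide with $R(1+i0)$ up to a correction that can be absorbed into $R_4$ without disturbing its prescribed orders of vanishing.'' Two things go wrong. First, the parametrix error you can actually achieve is \emph{not} compactly supported: because $\Pconic - 1$ is applied on the left, the error can only be made to vanish to infinite order at $\lb$ (and $\bfc$), not at $\rb$, where it is of genuine order $(n-1)/2$ (with an $e^{ir'}$ oscillation). So the premise of the uniqueness argument as you phrase it is false. Second, and more importantly, ``absorbed into $R_4$ without disturbing its prescribed orders of vanishing'' is exactly the claim that needs a proof, and it is not automatic. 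One must show that the Neumann series correction $GS$, where $S = (\Id + E)^{-1} - \Id$, is again polyhomogeneous on $\ZZb$ with the asserted index sets. The paper does this by (i) adding a finite-rank term to $\tilde G$ so that $\Id + E$ becomes invertible --- and to choose that finite-rank piece inside an admissible function space one needs a density lemma, proved via Green's identity at $r \to \infty$ together with the fact that the dilation symmetry of $\Pconic$ excludes $L^2$ eigenvalues; (ii) establishing, via the iterated identity $S = -E + E^2 - \cdots + E^{2N} S E^{2N}$, careful Hilbert--Schmidt estimates, and the composition algebras $\Psi^0_j$, $\Psi^\infty$, that $S \in \Psi^0_1 + \Psi^\infty$; and (iii) applying Melrose's pushforward theorem on the triple space $\ZZZb$ to show that $GS$ (after conjugation by $e^{\pm ir}$) is polyhomogeneous with only improved index sets. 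None of these steps is cosmetic, and your proposal omits all three.

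As a smaller point, your construction of $R_4$ by ``iteratively solving indicial equations for $\Pbconic$ at each face'' is in spirit compatible with the paper but skips the key compatibility issue: the models at $\zf$, $\lbo$, $\rbo$, and at the interior diagonal are each over-determined by the others on the corners, and the paper resolves this by specifying the exact separated models (the Bessel/Hankel series \eqref{metric-cone}, \eqref{metric-cone-zf}) and performing Borel summations at $\lbo$, $\rbo$, then explicitly verifying matching using \eqref{Bessel-asymptotics}. If you wish to proceed purely by indicial-root considerations, you must at minimum explain why the b-normal operator $\Pbconic$ of the parametrix at $\zf$ agrees with the boundary restriction of your interior parametrix and of the $\bfo$ model --- this is the content of the discussion surrounding \eqref{metric-cone-zf} and the ``compatibility'' paragraph there.
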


The proof of this theorem will occupy the rest of this section. 

\subsection{Parametrix construction}
To construct the kernel of $(\Delta + V_0 r^{-2}- (1 + i0))^{-1}$, we follow the method of \cite{HV2}: we first define a parametrix $\tilde G$ on the space $\ZZbsc$ and show that it gives a good approximation in the sense that 
$$
(\Delta + V_0 r^{-2}- 1) \tilde G = \Id + \tilde E
$$
with $\tilde E$ relatively `small'. We then correct $\tilde G$ by a finite rank term to obtain a new parametrix $G$ such that $\Id + E = (\Delta_{\conic} + V_0 r^{-2} -1) G$ is invertible, to obtain  $(\Delta + V_0 r^{-2}- (1 + i0))^{-1} = G (\Id + E)^{-1}$. This is all done in a calculus of operators that gives us very good control over the behaviour of the kernel at the boundary of the space $Z^2_{b,\sca}$, allowing us to prove Theorem~\ref{conic}. 

To construct $\tilde G$, we use the construction near $\sca$ and $\bfc$ from \cite{HV2}, which applies verbatim, as this construction is all local near infinity. 
Let us recall that this construction is made in four stages. First, we take an interior parametrix, i.e.\ a distribution $G_1$ conormal to and supported close to $\Diagbsc(Z) \subset \ZZbsc$ whose full symbol is the inverse of the full symbol of $\Delta_{\conic} - 1$. If we apply $\Delta_{\conic} - 1$ to such an interior parametrix we are left with an error term that, in a neighbourhood of $r = r' = \infty$, is smooth and supported close to $\Diagbsc$. If we view the error term on $Z^2_b$, then it is Legendrian with respect to $\Nsfstar \Diagb$ (see Section 4.1 of \cite{HV2}). In the second stage, this error is solved away microlocally with an intersecting Legendre distribution on $Z^2_b$ lying in $I^{-1/2}(\ZZb, (\Nscstar (\partial_\infty \Diagb), L_+), \Omegabsc)$, associated to the conormal bundle of the boundary of $\Delta_b(Z)$ and to the outgoing half of the `propagating Legendrian' $L_+$ described in the previous section. This gives us a parametrix $G_2$ with error te!
 rm $E_2$ that is Legendrian with respect to $L_+$ and microsupported away from $\partial_\infty \Nscstar \Diagb$ (see Section 4.2 of \cite{HV2}). 
 In the third stage, the error $E_3$ is solved away using a Legendrian conic pair associated to $(L, L^\sharp)$, giving a parametrix $G_3$ with error term $E_3$ Legendrian with respect to $L^\sharp$ only; thus, at this stage, the errors at $L$ have been solved away completely (see Sections 4.3 and 4.4 of \cite{HV2}).  In the fourth stage, the error term $E_3$ is solved away to infinite order at $\bfc$ and at $\lb$ (we recall that we can solve away to infinite order at $\lb$ but not $\rb$ since we apply the operator $\Delta_{\conic} + V_0 r^{-2} - 1$ in the left variables, so we obtain a Taylor series calculation which is easily solved order by order at $\lb$, while at $\rb$ we are left with a global problem which we cannot hope to solve). This yields a parametrix $G_4$ with error term $E$ rapidly vanishing at the boundary of $\ZZbsc$ except at $\rb$ where it has the form $A(r, y, y') (r')^{-(n-1)/2} e^{ir'} |d\gconic d\gconic'|^{1/2}$, where $A = O(r^{-\infty})$ as $r \to \i!
 nfty$ (see Section 4.5 of \cite{HV2}).

We take the kernel $\tilde G$ to be equal to $G_4$ in a neighbourhood of $\sca \cup \bfc \cup \Diagbsc$ of $\ZZbsc$, and supported away from $\lbo$ and $\rbo$. We now need to specify the parametrix near the boundary hypersurfaces $\zf, \lbo, \rbo$. 

At $\zf$, where $r = r' = 0$, we use the b-calculus. Any b-pseudodifferential operator on half-densities has a `normal operator', that is, the restriction of the kernel of the operator to the `front face' (here the face $\zf$), which has a natural interpretation as a dilation-invariant operator on a half-cylinder (here $\partial M \times (0, \infty)_\sigma$, where $\sigma = r'/r$). In the present case, we write $\Delta_{\conic} + V_0 r^{-2} - 1 = r^{-1} \big( \Pbconic - r^2 \big) r^{-1}$, so the b-operator of interest is $\Pbconic - r^2$, and its
normal operator is precisely $\Pbconic$, given by \eqref{Pb}, which is manifestly dilation-invariant. In the b-calculus, the normal operator of the inverse of a b-elliptic operator is the inverse of the normal operator \cite{APS}. 
We therefore specify that $\tilde G$ vanishes to second order at $\zf$, and the restriction of $(rr')^{-1} \tilde G $ to $\zf$ is equal to $\Pbconic^{-1}$. (We remark that this inverse exists due to assumption \eqref{posop}.) This has a distributional expansion in terms of the eigenfunctions on $\partial M$ as 
\begin{equation}\label{metric-cone-zf}
 \sum_{j=0}^\infty\Pi_{E_j}(y,y') \frac1{2\nu_j} \Big( (r/r')^{\nu_j} H(r'-r)+(r'/r)^{\nu_j} H(r-r')\Big) \left|\frac{dr dydr'dy'}{rr'}\right|^\demi .
\end{equation}
Thus $\tilde G = (r r') \Pbconic^{-1} + O(\rho_{\zf}^3)$ will be polyhomogeneous conormal at $\lbo$ and $\rbo$ with index set 
\begin{equation}
\mcA_{\lbo} = \mcA_{\rbo} = \{ (\nu_j + 1, 0) \mid j = 0, 1, 2, \dots \}
\label{lborbo}\end{equation}
with $\nu_j$ as in \eqref{metric-cone}; in particular, $\min \mcA_{\lbo} = \min \mcA_{\rbo} = \nu_0 + 1$. We also observe that this specification of $\tilde G$ near $\zf$ is compatible with the interior parametrix. This follows
from the fact that the full singularity (modulo $C^\infty$) at the diagonal, both for the interior parametrix and for \eqref{metric-cone-zf}, is uniquely determined by the full symbol of the operator. Explicitly, we can construct a kernel near $\zf$ as follows: we take our interior parametrix, which is supported close to the diagonal, and let $E_{\zf}$ denote the difference between this parametrix (restricted to $\zf$) and \eqref{metric-cone-zf}. As explained above, the difference is $C^\infty$. We extend this $C^\infty$ half-density function in some smooth manner from $\zf$ to $C(Y)$, and add this to our interior parametrix. The result agrees with our specifications both at the diagonal and at $\zf$.

Next we specify what happens at $\lbo$ and $\rbo$. To do this, we note that in the expansion \eqref{metric-cone}, the terms are vanishing more and more rapidly at $\lbo$ and at $\rbo$ as $j \to \infty$, since 
$J_\nu(z) = O(z^{\nu})$ as $z \to 0$. Therefore, we can form a Borel sum at these boundary hypersurfaces. To do this, choose boundary defining functions $\rho_{\lbo}, \rho_{\rbo}$ for these boundary hypersurfaces (for example we could take $\rho_{\lbo} = r'\langle r \rangle/r$, and $\rho_{\lbo} = r\langle r' \rangle/r'$). 
Then we specify  that $\tilde G$ is equal to
\begin{equation}
\bigg( \frac{\pi}{2i} (r r') \sum_j \Pi_{E_j}(y,y') \Ha^{(1)}_{\nu_j}(r)J_{\nu_j}(r') \varphi\big( \frac{\rho_{\lbo}}{ \epsilon_j }\big) + O(\rho_{\lbo}^\infty) \bigg) \Big|\frac{dr \, dr'}{r r'}\Big|^{1/2}
\label{cone-lbo}\end{equation}
near $\lbo$, for some $\varphi \in C_c^\infty[0, \infty)$ equal to $1$ near $0$, and some sequence $\epsilon_j$ tending to zero sufficiently fast, and 
\begin{equation}
\bigg( \frac{\pi}{2i} (r r')  \sum_j \Pi_{E_j}(y,y') \Ha^{(1)}_{\nu_j}(r')J_{\nu_j}(r)  \varphi\big( \frac{\rho_{\rbo}}{ \epsilon_j }\big)+ O(\rho_{\rbo}^\infty) \bigg) \Big|\frac{dr \, dr'}{r r'}\Big|^{1/2}
\label{cone-rbo}\end{equation}
near $\rbo$\footnote{Note the confusing fact that $\lbo$ is the face where $r' = 0$, which $\rbo$ is the face where $r = 0$!}. (We remark that in these formulae the $\Pi_{E_j}(y,y')$ terms contain half-density factors in the $(y, y')$ variables.) 
To check that this is compatible with the behaviour specified at $\zf$, we take the leading behaviour of these expressions at $\zf$. To do this we need the leading behaviour of Bessel and Hankel functions at $r=0$, given by \cite{AS}
\begin{equation}\begin{gathered}
J_\nu(z) = \frac1{\Gamma(\nu + 1)} \big( \frac{z}{2} \big)^\nu + O(z^{\nu + 1}), \\
\Ha^{(1)}_\nu(z) = \frac1{i\pi} \Gamma(\nu) \big( \frac{z}{2} \big)^{-\nu} + O(z^{-\nu + 1}).
\end{gathered}\label{Bessel-asymptotics}\end{equation}
This implies that at the leading behaviour of \eqref{cone-lbo} at $\zf$ is 
$$
(rr') \sum_j \Pi_{E_j} \frac1{2\nu_j} \big(\frac{r'}{r} \big)^{\nu_j},
$$
which is equal to \eqref{metric-cone} modulo $O(\rho_{\lbo}^\infty)$, 
and the leading behaviour of \eqref{cone-rbo} at $\zf$ is 
$$
(rr') \sum_j \Pi_{E_j} \frac1{2\nu_j} \big(\frac{r}{r'} \big)^{\nu_j},
$$
which is equal to \eqref{metric-cone} modulo $O(\rho_{\rbo}^\infty)$. 
This proves that all our specifications at $\zf, \lbo, \rbo$ are compatible. 

We next observe that the asymptotic formulae for Hankel functions for large argument, namely
$$
\Ha_{\nu}^{(1)}(r) = r^{-1/2} e^{ir - i\nu \pi/2 + i\pi/4} h_{\nu}(r), \quad r \geq 1, 
$$
where $h_{\nu}(r)$ is a classical symbol of order zero, i.e.\ with an expansion as $r \to \infty$ in nonpositive integral powers of $r$, 
implies that, near $\lbo \cap \lb$,  \eqref{cone-lbo} is of the form $r^{-(n-1)/2} e^{ir}|dg_{\conic} d{\gcyl'}|^{1/2}$ times a polyhomogeneous conormal function with $C^\infty$ index set at $\lb$ and index set $\mcA_{\lbo}$ at $\lbo$. A similar statement is valid for \eqref{cone-rbo} near $\rbo \cap \rb$. 

\begin{remark}\label{summary}
So far, we have found a parametrix $\tilde G$ which is the sum of a number of pieces: 
\begin{itemize}
\item a pseudodifferential operator, i.e.\ a kernel conormal at $\Diagbsc$ and supported close to $\Diagbsc$ (this is in the scattering calculus near $\partial_\infty \Diagbsc$ and in the b-calculus near $\partial_0 \Diagbsc$); 
\item
an intersecting Legendre distribution supported close to $\partial_\infty \Diagb$;
\item
 a conic Legendre pair supported near $\bfc$; and 
 \item
 a kernel which is supported away from $\bfc$ and is $e^{ir}e^{ir'}$ times a polyhomogeneous conormal half-density, with  index sets $\mcA_{\lbo} = \mcA_{\rbo}$ at $\lbo, \rbo$, and one-step index sets $2$ at $\zf$,  $(n-1)/2$ at $\lb, \rb$. 
  \end{itemize} 
In particular, our parametrix $\tilde G$ satisfies the conditions of Theorem~\ref{conic}. It remains to find the correction term and show that it also satisfies the conditions of Theorem~\ref{conic}.
\end{remark}

\subsection{Correction term and true resolvent}\label{correction}
Define $\tilde E = (\Delta_{\conic} + V_0r^{-2} - 1) \tilde G - \Id$. Then $\tilde E$ is $e^{ir}$ times a kernel that is conormal on $\ZZb$ and vanishes to order $1$ at $\zf$, $\infty$ at $\rbo, \lbo, \lb$ and $\bfc$ and to order $(n-1)/2$ at $\rb$. Thus, $\tilde E$ is a compact operator acting on $\ang{r}^{-l} L^2(Z)$ for any $l > 1/2$. It is not necessarily the case that $\Id + \tilde E$ is invertible on any of these spaces, however. To arrange this, for some (and then, it turns out, every) $l$, we add, following \cite{HV2}, a finite rank term to $\tilde G$, of the form 
$$
\sum_{i=1}^N \phi_i \ang{\psi_i, \cdot}.
$$
Here $N$ is the common value of the dimension of the kernel and cokernel of $\Id + \tilde E$ on $\ang{r}^{-l} L^2(Z)$ (where $l$ is a fixed real number $> 1/2$). We choose $\psi_i$ to span the null space of $\Id + \tilde E$ and $\phi_i$ to span a subspace supplementary to the range of $\Id + \tilde E$. Note that, due to the rapid vanishing of the kernel of $\tilde E$ as $r \to \infty$, if $\psi = -\tilde E\psi$, then $\psi$ vanishes rapidly at $r \to \infty$. Also, since $\tilde E$ vanishes to first order at $r = 0$, $\psi$ must vanish to infinite order at $r=0$ also. Hence each $\psi_i \in \CIdot(Z)$. 

To choose the $\phi_i$, we prove an analogue of Lemma 6.1 in \cite{HV2}:

\begin{lem} Let $l > 1/2$, and let $\CIdot(Z)$ denote smooth functions on $Z$ vanishing to infinite order at the boundary. Then the image of $\Pconic - 1$ on $\CIdot(Z) + \tilde G (\CIdot(Z))$ is dense in $\ang{r}^{-l} L^2(Z)$.
\end{lem}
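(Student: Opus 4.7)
By Hahn-Banach duality, density of the image in $\ang{r}^{-l}L^2(Z)$ reduces to showing that any $v$ in the dual space $\ang{r}^l L^2(Z)$ that annihilates the image must vanish. Assume therefore that
\[
\langle v,(\Pconic-1)(f+\tilde G g)\rangle = 0 \qquad \text{for all } f,g\in\CIdot(Z),
\]
and aim for $v\equiv 0$. Taking $g=0$ and using that $(\Pconic-1)\CIdot(Z)\subset\CIdot(Z)$ together with symmetry of $\Pconic$ gives $(\Pconic-1)v=0$ distributionally on $(0,\infty)_r\times Y$, so that $v$ is smooth in the interior by elliptic regularity. Taking $f=0$ and using the parametrix identity $(\Pconic-1)\tilde G g = g+\tilde E g$ gives the integral equation $v = -\tilde E^* v$.

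The key step is to extract the large-$r$ asymptotic behavior of $v$ from this integral equation. Writing $\tilde E = e^{ir}\cdot K$ with $K$ polyhomogeneous conormal on $\ZZb$ as in Remark~\ref{summary}, the Schwartz kernel of $\tilde E^*$ takes the form $e^{-ir'}\overline{K(r',y',r,y)}$. At the face $r\to\infty$ of $\tilde E^*$ (corresponding to the face $\rb$ of $\tilde E$, where $K$ vanishes to order $(n-1)/2$), the leading behavior of the kernel is $e^{-ir'}\, r^{-(n-1)/2}\, L(r',y',y)$ for some function $L$ which is Schwartz-like in $r'$, thanks to the rapid vanishing of $K$ at $\bfc$, $\lbo$ and $\rbo$. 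This rapid decay in $r'$ makes $\tilde E^* v$ absolutely convergent for $v\in\ang{r'}^l L^2(Z)$, and asymptotic analysis of the integral yields
\[
\tilde E^* v(r,y) = F(y)\,r^{-(n-1)/2} + O(r^{-(n-1)/2-\varepsilon}),\qquad F(y):=\int e^{-ir'}\overline{L(r',y',y)}\,v(r',y')\,dr'\,dy',
\]
for some $\varepsilon>0$; crucially, the leading term is \emph{non-oscillatory} in $r$.

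Since the metric on $Z$ is exactly conic, we now use separation of variables for $(\Pconic-1)v=0$. Expanding $v = \sum_j v_j(r)\,e_j(y)$ in the orthonormal eigenbasis of $\Delta_Y+V_0+(n/2-1)^2$ with eigenvalues $\nu_j^2$, each $v_j$ satisfies the Bessel equation
\[
\Big(-\partial_r^2 - \tfrac{n-1}{r}\partial_r + \tfrac{\nu_j^2-(n/2-1)^2}{r^2} - 1\Big)v_j = 0,
\]
with solution space spanned by $r^{-(n-2)/2}\Ha^{(1)}_{\nu_j}(r)$ and $r^{-(n-2)/2}\Ha^{(2)}_{\nu_j}(r)$. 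Their large-$r$ asymptotics give a leading expansion $v(r,y)\sim r^{-(n-1)/2}\bigl(c_+(y)e^{ir}+c_-(y)e^{-ir}\bigr)$ for some $c_\pm\in C^\infty(Y)$. Matching against the non-oscillatory form $F(y)r^{-(n-1)/2}$ extracted from $v=-\tilde E^* v$, and using linear independence of $1, e^{ir}, e^{-ir}$ as asymptotic profiles in $r$, forces $c_+\equiv c_-\equiv F\equiv 0$. Thus $v$ decays strictly faster than $r^{-(n-1)/2}$ at infinity, which by Rellich-type uniqueness applied modewise to each Bessel equation forces $v_j\equiv 0$ for every $j$, and hence $v\equiv 0$.

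The principal obstacle is the asymptotic analysis in the second paragraph: extracting a non-oscillatory leading term from $\tilde E^* v$ as $r\to\infty$, starting only from the weighted-$L^2$ datum $v\in\ang{r'}^l L^2(Z)$. This requires a careful exploitation of the polyhomogeneous structure of $\tilde E^*$ at the various boundary faces of $\ZZb$ together with a bootstrap argument, closely following the proof of Lemma~6.1 in~\cite{HV2}.
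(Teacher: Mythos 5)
Your Hahn--Banach set-up, the derivation of $(\Pconic-1)v=0$ and $v=-\tilde E^{*}v$, and the use of the Bessel-mode expansion are all correct and essentially mirror the paper's proof. The gap is in the asymptotic analysis of $\tilde E^{*}v$ at $r\to\infty$, which is the decisive step.

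You write $\tilde E=e^{ir}K$ with $K$ polyhomogeneous, and conclude that the kernel of $\tilde E^{*}$ carries an oscillation $e^{-ir'}$ in the \emph{integration} variable, so that $\tilde E^{*}v(r,y)\sim F(y)\,r^{-(n-1)/2}$ is non-oscillatory in $r$. That placement of the phase is wrong. The error $\tilde E$ arises from failing to solve at $\rb$ (where $r'\to\infty$), and there the outgoing phase is $e^{ir'}$, i.e.\ the oscillation is in the \emph{second} variable of $\tilde E$, not the first. (The parenthetical ``$e^{ir}$'' in Section~\ref{correction} of the paper is a typo; the parametrix construction explicitly produces $A(r,y,y')(r')^{-(n-1)/2}e^{ir'}$ at $\rb$, and the proof of the lemma itself asserts that $\tilde E^{*}$ maps into $r^{-(n-1)/2}e^{-ir}C^{\infty}(Z)$.) Consequently $\tilde E^{*}$ has kernel $\overline{\tilde E(r',y',r,y)}\sim \overline{A(r',y',y)}\,r^{-(n-1)/2}e^{-ir}$ as $r\to\infty$, and
\[
\tilde E^{*}v(r,y)\sim r^{-(n-1)/2}e^{-ir}\,h_{0}(y)+O\bigl(r^{-(n+1)/2}\bigr),
\]
which is \emph{incoming oscillatory}, not non-oscillatory. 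With this corrected form, your linear-independence argument among $1,\ e^{ir},\ e^{-ir}$ only kills the outgoing coefficient ($c_{+}=0$); it does not force $c_{-}=0$, so the conclusion $v\equiv0$ does not follow by that route. One still needs a mechanism to rule out the purely incoming leading term: the paper does this by applying Green's identity on $Z$ to $h$ and $\overline h$, using that $h$ vanishes to infinite order at $r=0$ (since $\tilde E^{*}$ vanishes at $\zf,\lbo,\rbo$) so the only boundary contribution is $-2i\int_{Y}|h_{0}|^{2}\,dy=0$. An alternative fix in your modewise picture: regularity at $r=0$ forces each $v_{j}$ to be a multiple of $J_{\nu_j}$, which has both $e^{ir}$ and $e^{-ir}$ components of equal modulus, so a nonzero purely incoming mode is impossible. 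Either way, some extra argument (Green/Rellich or mode regularity) is required; your current proof silently sidesteps it via the erroneous non-oscillation claim.
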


\begin{proof} 
 This is proved in a similar way as Lemma 6.1 in \cite{HV2}. Let $\mathcal{M}$ be the subspace spanned by $(\Pconic - 1)(\CIdot(Z)$ and  $(\Pconic - 1)(\tilde G (\CIdot(Z)))$, and let $f$ be a function in 
 $\ang{r}^{-l} L^2(Z)$ orthogonal (in the inner product on $\ang{r}^{-l} L^2(Z)$) to $\mathcal{M}$. We shall prove that $f=0$. 
 
 With $\ang{\cdot, \cdot}$ denoting the inner product on $L^2$, we have 
\begin{equation}\begin{gathered}
\ang{ \ang{r}^{l} f, \ang{r}^{l} (\Pconic - 1) u} = 0 \quad \forall \ u \in \CIdot(Z) \\
 \implies 
 \ang{  (\Pconic - 1)(\ang{r}^{2l} f), u} = 0 \quad \forall \ u \in \CIdot(Z) 
 \end{gathered}\end{equation}
 which implies, setting $h = \ang{r}^{2l} f$, that $(\Pconic - 1) h = 0$. Now we apply the same argument setting this time $u = \tilde G v$, where $v \in \CIdot(Z)$, and the operator identity $(\Pconic - 1)\tilde G = \Id + \tilde E$,  to deduce that 
$ (\Id - \tilde E^*) h = 0$, or equivalently $h = \tilde E^* h$. But $\tilde E^*$ vanishes to order $1$ at $\zf$ and to infinite order at $\lbo, \rbo$, which shows that $h(r,y)$ vanishes to infinite order at $r=0$. Similarly,  $\tilde E^*$ maps $\ang{r}^{2l} L^2(Z)$ to $r^{-(n-1)/2} e^{-ir} \CI(Z)$ for $r \geq 1$, so we deduce that $h$ has this behaviour for $r \to \infty$. Let $h = r^{-(n-1)/2} e^{-ir} h_0(y) + O(r^{-(n+1)/2})$. Then applying Green's identity to $h$ and its complex conjugate, we find that 
$$\begin{gathered}
0 = \int_Z \Big( ((\Pconic - 1)h) \overline{h} - h  (\Pconic - 1)\overline{h} \Big)\, r^{n-1} dr dy \\
=  \bigg( \lim_{r \to \infty} \int_Y \big( (\partial_r h)\overline{h}  - h \partial_r \overline{h} \big) r^{n-1} dy - \lim_{r \to 0} \int_Y \big( (\partial_r h)\overline{h} - h \partial_r \overline{h} \big) r^{n-1} dy \bigg) \\
= -2i \int_Y |h_0(y)|^2 \, dy.
\end{gathered}$$
This shows that $h_0 = 0$. This implies that actually $h = O(r^{-(n+1)/2})$ as $r \to \infty$, so $h \in L^2(Z)$. But $\Pconic + V_0 r^{-2}$ has a dilation symmetry, so it has no point spectrum. Therefore $h = 0$, which finishes the proof. 
 \end{proof}
 
Having established this density result, it follows that we can find $\phi_i$, $i = 1 \dots N$, spanning a space supplementary to the range of $\Id + \tilde E$, each of which is the sum of a function in $\CIdot(Z)$ and one in $\tilde G (\CIdot(Z))$. By the mapping properties of $\tilde G$, such functions are smooth in $(0, \infty)$ and having the form $r^{-(n-1)/2} e^{ir} C^\infty(Z)$ as $r \to \infty$, while polyhomogeneous with index set $\mcA_{\lbo}$ as $r \to 0$.  Thus, when we add this finite rank term to $\tilde G$, it does not change any of properties of $\tilde G$ as listed in Remark~\ref{summary}. Let  the resulting kernel be denoted $G$. We then have $(\Pconic - 1)G' = \Id + E$, with $E$ having the same structure as above and with $\Id + E$ invertible on $\ang{r}^{-l} L^2$. Notice that the same $G$ works for all $l > 1/2$. 

Let $(\Id + E)^{-1} = \Id + S$. Then, since $E$ is Hilbert-Schmidt on $\ang{r}^{-l} L^2$, and we can write $S = -E + E^2+ ESE$, $S$ is also Hilbert-Schmidt on $\ang{r}^{-l} L^2$. To analyze finer properties of $S$, we introduce the following spaces of operators:

\begin{defn} 
Let $\Psi_j^0$, $j = 0, 1, 2, \dots$, be the algebra of operators (acting on half-densities) whose kernels are smooth on $\ZZbsc$,  vanishing to order $j$ at $\zf$ and vanishing to infinite order at all other boundary hypersurfaces; thus, we can think of these operators as b-pseudodifferential operators of order $-\infty$, vanishing to order $j$ at $\zf$. 
 Also, let $\Psi^\infty$ be the algebra of operators (acting on half-densities) whose kernels are smooth on $C(Y)$, and on $\ZZbsc$ are of the form ${r'}^{-(n-1)/2} e^{ir'} \CI(\ZZbsc)$  near $\rb$ and vanish to infinite order at all other boundary hypersurfaces. 
\end{defn}

It is straightforward to check the composition properties (the first following from composition properties of the b-calculus)
\begin{equation}\begin{aligned}
\Psi^0_j \circ \Psi^0_k &\subset \Psi^0_{j+k}\\
\Psi^0_j \circ \Psi^\infty &\subset \Psi^\infty \\
\Psi^\infty \circ \Psi^0_j &\subset \Psi^0_k \ \forall \ k, \text{ i.e.\ } \Psi^\infty \circ \Psi^0_j \subset \CIdot(\MMkb) \\
\Psi^\infty \circ \Psi^\infty &\subset \Psi^\infty.
\end{aligned}\label{comp-Psi}\end{equation}

In terms of these algebras, we can write $E \in \Psi^0_1 + \Psi^\infty$. 
Iterating the identity $$S = -E + E^2+ ESE,$$ we obtain 
\begin{equation}
S = - E + E^2 - E^3 + \dots + E^{4N} + E^{2N} S E^{2N}.
\label{SES}\end{equation}

Applying \eqref{comp-Psi} iteratively, we see that $E^j \in \Psi^0_j + \Psi^\infty$. Therefore, $- E + E^2 - E^3 + \dots + E^{4N} \in \Psi^0_1 + \Psi^\infty$. Next we analyze $E^{2N} S E^{2N}$. 

\begin{lem}
The operator $E^{2N} S E^{2N}$ has a kernel of the form
\begin{equation}
\Big( \frac{r}{\ang{r}} \Big)^N \Big( \frac{r'}{\ang{r'}} \Big)^N \ang{r'}^{-(n-1)/2} e^{ir'} C^{N}(Z \times Z)
\label{E2NSE2N}\end{equation}
as a multiple of the half-density $|d\gconic d\gconic'|^{1/2}$. Here $C^N(Z \times Z)$ denotes the space of functions on $Z \times Z$ with $N$ continuous derivatives. 
\end{lem}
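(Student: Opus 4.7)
The plan is to leverage the decomposition $E \in \Psi^0_1 + \Psi^\infty$ and iterate the composition rules \eqref{comp-Psi} to conclude $E^{2N} \in \Psi^0_{2N} + \Psi^\infty$. Writing $F = E^{2N}$, the central observation will be that any kernel $F$ in this sum satisfies the pointwise bound
\[
|F(r, y, r_1, y_1)| \leq C \Big(\frac{r}{\ang{r}}\Big)^{N} \Big(\frac{r_1}{\ang{r_1}}\Big)^{N} \ang{r_1}^{-(n-1)/2},
\]
with rapid decay at the corner hypersurfaces $\bfc, \sca, \lb$ and a possible $e^{ir_1}$ oscillation at $\rb$ from the $\Psi^\infty$ component. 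For the $\Psi^0_{2N}$ piece this follows because the order-$2N$ vanishing at $\zf$ can be redistributed as $r^{N} r_1^{N}$ using the comparability of $r$ and $r_1$ on the $\zf$-support, while the other boundaries contribute rapid vanishing; for the $\Psi^\infty$ piece, $\ang{r_1}^{-(n-1)/2} e^{ir_1}$ is exactly the asymptotic form at $\rb$, and the $(r/\ang{r})^N(r_1/\ang{r_1})^N$ prefactor is absorbed by infinite-order vanishing at every other boundary.

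Next I would control the composition via Cauchy--Schwarz. The function $(r_1, y_1) \mapsto F(r, y, r_1, y_1)$ lies in $\ang{r_1}^l L^2$ for any $l > 1/2$ with norm bounded by $C(r/\ang{r})^N$, since the weight $(r_1/\ang{r_1})^{2N} \ang{r_1}^{-(n-1)-2l}$ integrates at $r_1 \to \infty$ against $r_1^{n-1} dr_1$ precisely when $2l > 1$. Dually, $F(\cdot, \cdot, r', y')$ lies in $\ang{r_2}^{-l} L^2$ (with any weight, since the $r_2$-decay is rapid) and has norm bounded by $C(r'/\ang{r'})^N \ang{r'}^{-(n-1)/2}$, with the $e^{ir'}$ oscillation factored outside when the $\Psi^\infty$ component of the right factor is taken. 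Combining these with the boundedness of $S$ on $\ang{r}^{-l}L^2$ yields
\[
|FSF(r, y, r', y')| \leq C \Big(\frac{r}{\ang{r}}\Big)^{N} \Big(\frac{r'}{\ang{r'}}\Big)^{N} \ang{r'}^{-(n-1)/2},
\]
verifying the pointwise form \eqref{E2NSE2N} at the $C^0$ level, with the oscillation $e^{ir'}$ factored out from the right $\Psi^\infty$ component. The $e^{ir_1}$ oscillation of the left $F_\infty$ is integrated away inside the double integral and does not appear as an oscillation in the external variables.

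Finally, for the $C^N$ regularity I would differentiate under the integral sign. Since $F$ is smooth on $\ZZbsc$, the derivatives $\partial_r^\alpha \partial_y^\beta$ act on the left $F$ factor only, each one reducing the order of vanishing at $\zf$ by at most one; after $\alpha+\beta \leq N$ derivatives the resulting kernel still lies in $\Psi^0_{2N-\alpha-\beta} + \Psi^\infty$, which continues to carry the factor $(r/\ang{r})^N$ used above, so the same Cauchy--Schwarz argument bounds $\partial_r^\alpha \partial_y^\beta (FSF)$. The analogous statement holds for derivatives $\partial_{r'}, \partial_{y'}$ applied to the right $F$. The main obstacle I expect is tracking the $r'$-derivatives landing on the extracted factor $\ang{r'}^{-(n-1)/2} e^{ir'}$: these must be absorbed by the classical symbolic nature of the smooth factor of $F_\infty$ at $\rb$, so that each $\partial_{r'}$ acting on the smooth factor improves by one order of $1/r'$ and each $\partial_{r'}$ acting on the oscillation pulls out a bounded constant. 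Iterating up to $N$ times produces the claimed $C^N(Z \times Z)$ regularity.
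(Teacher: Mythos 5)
Your proposal is correct and follows essentially the same route as the paper: it relies on $E^{2N}\in\Psi^0_{2N}+\Psi^\infty$, reads off the vanishing orders at each boundary hypersurface to place $E^{2N}$ in weighted $L^2$ spaces in each variable separately, applies Cauchy--Schwarz through $S$, and tracks up to $N$ derivatives by observing that each differentiation costs at most one order of vanishing at $\zf$ while the $\Psi^\infty$ structure at $\rb$ is preserved. Your additional care with the $e^{ir'}$ oscillation and the pointwise formulation makes a few steps more explicit, but the decomposition, the Cauchy--Schwarz mechanism, and the handling of the $C^N$ regularity are the same as in the paper's proof.
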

 
\begin{proof}
In this proof, all kernels are understood to be multiples of the Riemannian half-density $|d\gconic d\gconic'|^{1/2}$. 

First, we know that $S$ is Hilbert-Schmidt on the space 
$\ang{r}^{-l} L^2(Z)$, $l > 1/2$, so its kernel is in the space 
$$
\ang{r}^{-l} \ang{r'}^{l} L^2(Z \times Z).
$$
However, rearranging the identity $(\Id + E)(\Id + S) = \Id$, we find that
$S = -(E + ES)$. Since the kernel of $E$ vanishes to infinite order as $r \to \infty$, we find that this is true for $S$ as well. In particular, we see that 
$$
S \in \ang{r}^{-l} \ang{r'}^{l} L^2(Z \times Z).
$$

Next, we have $E^{2N}(z,z'') \in \Psi^0_{2N} + \Psi^\infty$. If we differentiate this kernel $N$ times, it still vanishes to order $N$ at $\zf$, and to infinite order as $r \to \infty$. Therefore we can say that $E^{2N}$ has a kernel which is of the form 
$$
\Big( \frac{r}{\ang{r}} \Big)^N \ang{r}^{-N} C^N \big(Z; \ang{r'}^l L^2(Z') \big);
$$
that is, it is $C^N$ in the first variable, vanishing to order $N$ as $r \to 0$ and infinite order as $r \to \infty$, as a  $L^2$ function (weighted by $\ang{r'}^l)$ in the second variable. (The prime on $Z'$ in the formula above indicates the right factor; lack of prime indicates the left factor.)

Equally, we can describe $E^{2N}$ as being in the space
$$
\Big( \frac{r'}{\ang{r'}} \Big)^N \ang{r'}^{-(n-1)/2} C^N \big(Z'; \ang{r}^{-l} L^2(Z) \big);
$$
that is,  a $C^N$ function of the second variable, vanishing to order $N$ as $r' \to 0$ and order $(n-1)/2$ as $r' \to \infty$, with values in $L^2$ (weighted by $\ang{r}^{-l})$ in the first variable. 

The composition $E^{2N} S E^{2N}$ is therefore, using the descriptions above and applying the Cauchy-Schwartz inequality,
in the space \eqref{E2NSE2N}.
\end{proof}

It follows from the Lemma and \eqref{SES} that $S$ lies in the sum of the spaces $\Psi^0_1 + \Psi^\infty$ and \eqref{E2NSE2N} for every $N$. But the intersection of these spaces is just $\Psi^0_1 + \Psi^\infty$,
so we conclude that $S \in \Psi^0_1 + \Psi^\infty$.

The exact outgoing resolvent kernel on $Z$ is $G' + G'S$. 
We need to determine the nature of the correction term $G'S$. Recall that $G'
$ is the sum of a pseudodifferential operator $G_1$ and a distribution $G - G_1$ that is Legendrian at $\bfc, \lb, \rb$ and polyhomogeneous at the remaining boundaries. It is not hard to check that 
$$ 
G_1 \circ \Psi^0_1 \subset \Psi^0_1, \quad G_1 \circ \Psi^\infty \subset \Psi^\infty.
$$
Therefore $G_1 \circ S \in \Psi^0_1 + \Psi^\infty$. 

The composition of $G - G_1$ with $S$ can be analyzed using Melrose's Pushforward Theorem \cite{cocdmc}. To do this, we view the composition as the result of lifting the kernels of $G - G_1$ and $S$ to the b-`triple space' 
\begin{multline*}
\ZZZb = \big[ Z^3; (\partial_0 Z)^3; \partial_0 Z \times \partial_0 Z  \times Z; \partial_0 Z \times Z \times \partial_0 Z  ; Z \times \partial_0 Z \times \partial_0 Z ; \\ (\partial_\infty Z)^3; \partial_\infty Z \times \partial_\infty Z  \times Z; \partial_\infty Z \times Z \times \partial_\infty Z  ; Z \times \partial_\infty Z \times \partial_\infty Z \big]
\end{multline*}
in which all of the boundary hypersurfaces of codimension 2 and 3 that meet the diagonal are blown up; see for example \cite[Section 23]{scatmet}. This space has three stretched projections $\pi_L, \pi_C, \pi_R$ to $\MMb$ according as they omit the left, centre or right variable, respectively. 
The product of kernels $A$ and $B$ on $\MMb$ can be represented
as
$$
A \circ B = (\pi_C)_* \Big( \pi_R^* A  \cdot \pi_L^* B \Big).
$$
Let $A = e^{-ir} (G - G_1)$ and $B =  S e^{-ir'} $. Although the kernel of $A$ is Legendrian at some boundary faces, we claim that the  product of
$\pi_R^* A$ and $\pi_L^* B$ on $\ZZZb$ is 
polyhomogeneous conormal. To see this, notice that the centre variable of $\ZZZb$ is simultaneously the right variable of $A$ and the left variable of $B$. The places where $A$ is Legendrian is at $\bfc$ and $\rb$, which is where the right variable of $A$ goes to infinity, but since the kernel of $B$ is rapidly decreasing when the left variable of $B$ goes to infinity, the Legendrian behaviour is killed when these kernels are multiplied on $\ZZZb$. 
The pushforward theorem \cite[Theorem 5]{cocdmc} then shows that $e^{-ir} (G - G_1) S e^{-ir'}$ is polyhomogeneous on $\ZZb$ with index sets starting at $1$ at $\zf$, $(n-1)/2$ at $\lb$ and $\rb$,  $\infty$ at $\bfc$, and $\nu_0$ at $\lbo$ and $\rbo$.  Therefore $G S$ is a kernel satisfying the conditions of $R_4$ in the statement of Theorem~\ref{conic}. This completes the proof of Theorem~\ref{conic}.


\section{Low energy resolvent construction}\label{lerc} 
We now construct the low energy asymptotics of the outgoing resolvent $R(\lambda + i0) = (P - (\lambda + i0)^2)^{-1}$ of $P =\Delta+V$, 
for an asymptotically conic manifold $(M,g)$, with $g$ as in 
\eqref{metricconic} and with potential function $V$ as in \eqref{hyp2}, \eqref{hyp3}.

It is convenient to split this construction into two parts, the `b'-part and the `scattering' part. To do this we choose a cutoff function $\chi$ such that $\chi(t) = 1$ for $t \leq 1$ and $\chi(t) = 0$ for $t \geq 2$. We then look for two kernels $\Gsc$ and $\Gb$, solving the equations
\begin{equation}
(P - \lambda^2) (\Gsc) = \chi(x/\lambda), \quad 
(P - \lambda^2) (\Gb) = 1 - \chi(x/\lambda)
\end{equation}
where the functions on the right hand side act as multiplication operators. We shall continue to use the notation $\rho = x/\lambda$, $\rho' = x'/\lambda$, $\sigma = x/x' = \rho/\rho'$. 

\subsection{Construction of $\Gb$}\label{sect:constructionGb}

We start with $\Gb$, which is an approximation to $R(\lambda + i0) (1-\chi(\rho'))$. This is supported away from $\bfc$ and $\rb$, see figure \ref{support}. Our ansatz is that $e^{-i/\rho} \Gb$ is conormal at the diagonal $\Diagb$ and polyhomogeneous conormal at the remaining faces. We specify $\Gb$ by giving a certain number of  compatible models for $e^{-i/\rho} \Gb$  at each of these faces. We use the boundary defining function $\lambda$ for (the interior of)  $\zf, \lbo, \rbo, \bfo$ and write $(\Gb)^k_\bullet$ for the coefficient of $\lambda^k$ in the expansion of $\Gb$ at these faces. For the leading order coefficient we have $(\Gb)^k_\bullet = \lambda^{-k} \Gb |_{\bullet}$ (note that the operation of restriction of a half-density has the effect of cancelling a factor of $d\lambda/\lambda$). 

\begin{figure}[ht!]
\begin{center}
\input{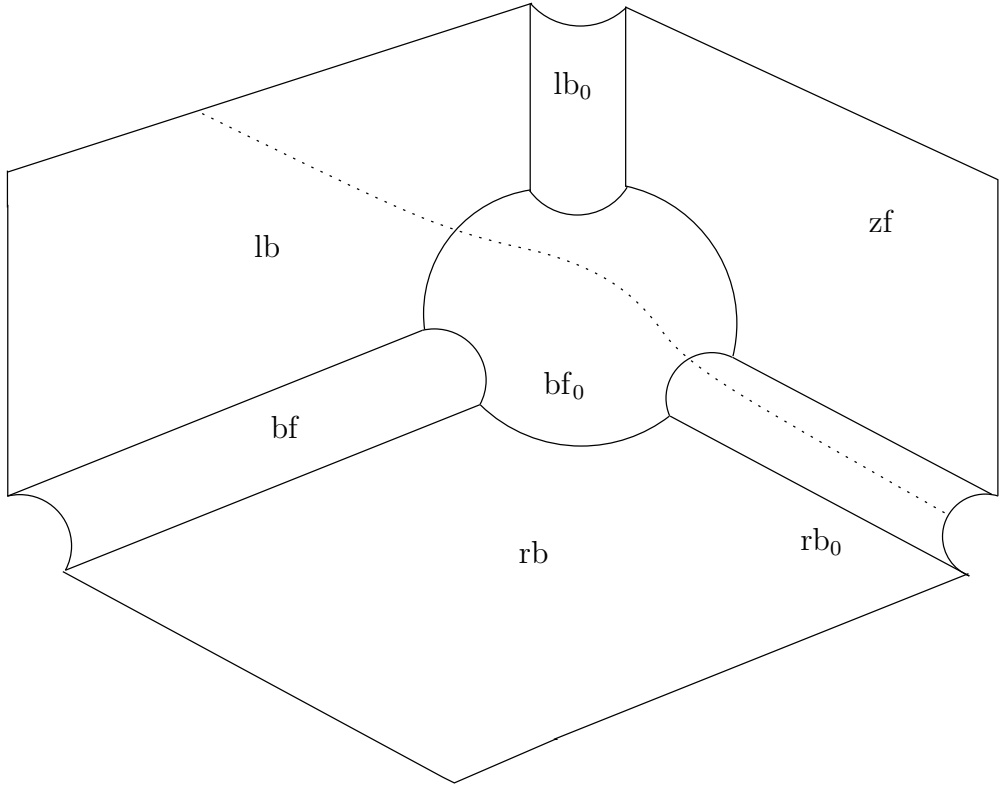}
\caption{The support of $G_b$ at the boundary is located on the right of the dashed line.}
\label{support}
\end{center}
\end{figure}

\subsubsection{Terms at $\zf$}
Similarly to the previous section, and following \cite{GH1}, we write $\Delta+V=xP_bx$ for some $b$-elliptic operator in the sense of \cite{APS}
\[P_b=-(x\pl x)^2+\Delta_{\pl M}+(n/2-1)^2+V_0(y) + xW, \quad W\in {\rm Diff}^2_b(M)\] 
where $\Delta_{\pl_M}$ denotes the Laplacian on  the boundary $\pl M$ equipped with the metric $h(0)$ and ${\rm Diff}^m_b(M)$ denotes
the space of $m$-th order b-differential operators, i.e.\ those obtained from the enveloping algebra of the Lie algebra of smooth vector fields
tangent to $\pl M$. (Here we are writing derivatives with respect to the flat connection annihilating the half-density $|dg_b|^{1/2} = |x^n dg|^{1/2}$.)

The theory of b-elliptic operators given by Melrose \cite[Sec. 5.26]{APS} (see also the discussion in \cite{GH1}) shows that there is a generalized inverse $Q_b$, which is a b-pseudodifferential operator of order $-2$, for the operator $P_b$ on $L^2_b$, such that 
\begin{equation*}
P_b Q_b = Q_b P_b = \Id -\Pi_b.
\end{equation*}
where  $\Pi_b$ is orthogonal projection on the $L_b^2$ kernel of $P_b$. A zero mode of $P_b$ would be either a zero mode, or a zero-resonance, of $P$. However, assumption \eqref{hyp3} is that $P$ has no resonance nor eigenvalue at $0$. Hence we have 
$P_b Q_b = \Id$. 
The kernel $Q_b$ is conormal at the b-diagonal $\Delta_{k, \sca} \cap\zf$, 
uniformly up to $\zf \cap \bfo$ (as a multiple of the half-density $|dg_b dg_b'|^{1/2}$), and is polyhomogeneous conormal  to the three boundary faces of 
$\zf$ (ie. $\rbo$, $\lbo$ and $\bfo$). The index sets giving the exponents and logs in the expansion at the faces are 
$\mc{E}(Q_b)=(\mc{E}_{\bfo}(Q_b), \mc{E}_{\rbo}(Q_b), \mc{E}_{\lbo}(Q_b))$ where 
$\mc{E}_{\rbo}(Q_b)=\mc{E}_{\lbo}(Q_b)$ are a logarithmic extension  of the index set 
$$
\big\{ \big(\nu_j + k, 0\big) \mid , k\in\nn_0, \  \nu_j^2 \in  \operatorname{spec} \Delta_{\partial M} + V_0 + (n/2 - 1)^2 \big\}
$$
and \[ \mc{E}_{\bfo}=\nn_0\x \{ 0 \}.\]  
We set
$({\Gb})_{\zf}^0 =  x^{-1} Q_bx^{-1}$.

\subsubsection{Term at $\bfo$}
As noted above, the face $\bfo$ of $\MMksc$ is canonically the same as $Z^2_{b,\sca}$ where $Z=(0,\infty)_\rho \x \pl M$. 
Following Subsection 3.4 of \cite{GH1}, the operator $P-\la^2$ vanishes at order $2$ at $\bfo$ as a b-differential operator 
on $M^2_{k,b}$ and the induced operator at $\bfo$, 
$I_{\bfo}(\la^{-2}(\Delta + V-\la^2))$, is given by the operator $\Pconic -1$ acting on the left variable on $Z^2_{b,\sca}$.  We therefore set 
$({\Gb})_{\bfo}^{-2}$ to be $1 - \chi(\rho')$ times its inverse constructed in Section \ref{exactcone}: 
\begin{equation}\label{bfo}\begin{gathered}
(\Gb)^{-2}_{\bfo}:=  (1 - \chi(\rho')) (\Pconic - (1+i0))^{-1}  = 
   \frac{i\pi  (1-\chi(\rho'))}{2\rho\rho'} \\ \times  \sum_{j=0}^\infty\Pi_{E_j}(y,y')\Big( J_{\nu_j}(\frac{1}{\rho}) \Ha^{(1)}_{\nu_j}(\frac{1}{\rho'})H(\rho-\rho')+
J_{\nu_j}(\frac{1}{\rho'})\Ha^{(1)}_{\nu_j}(\frac{1}{\rho})H(\rho'-\rho)\Big).
\end{gathered}\end{equation}
This matches with $G_{\zf}^{0}$ at $\zf\cap\bfo$, since, as shown in the previous section, the operator $\Pconic - 1$ can be written as $\rho \, \Pbconic \, \rho$ where $\Pbconic$ is as in \eqref{Pb}, and the normal operator of $\Pbconic$ (in the sense of the b-calculus) agrees with the normal operator $N(P_b)$ of $P_b$ defined at $\zf$. We have seen that both the normal operators of $x (\Gb)^0_{\zf} x$
and of $\rho^{-1} (\Gb)^{-2}_{\bfo} \rho^{-1}$ are the inverse of $N(P_b)$, which is precisely the matching conditions for these two models. 

\subsubsection{Terms at $\rbo$ and $\lbo$} 
We take as the leading terms at $\rbo$ and $\lbo$, the models given in \cite[Section 4.5]{GH1} with $k$ replaced by $i\lambda$, which amounts to replacing the modified Bessel function $K_{\nu_j}$ in \cite{GH1}  by the Hankel function $\Ha_{\nu_j}^{(1)}$. We also have to multiply by $1-\chi(\rho')$ (which only affects $\rbo$). Therefore we take, for all $\nu_j \leq 1$, 
\begin{equation}\begin{gathered}
({\Gb})_{\lbo}^{\nu_j - 1} = \frac{i \pi}{2} (x' \rho)^{-1} v_j(y,z') \Ha_{\nu_j}^{(1)}(1/\rho)  \Big| \frac{d\rho dy}{\rho}dg'_b \Big|^{1/2},  \\
({\Gb})_{\rbo}^{\nu_j - 1} = \frac{i \pi}{2}(1-\chi(\rho')) (x \rho')^{-1} v_j(z,y')  \Ha_{\nu_j}^{(1)}(1/\rho')  \Big| \frac{d\rho' dy'}{\rho'}dg_b \Big|^{1/2}
\end{gathered}\label{Gbrbo}\end{equation}
where $v_j(z,y')$ is the unique function on $M \times \pl M$ such that 
\begin{equation}
P_b v_j = 0, \quad  v_j(x,y,y') = \frac{\Pi_{E_j}(y, y')}{2^{\nu_j} \Gamma(\nu_j + 1)} x^{-\nu_j} + O(x^{-\nu_j - 1} \log x), \quad x \to 0. 
\label{vjdefn}\end{equation}
The existence and uniqueness of $v_j$, and the matching of  terms \eqref{Gbrbo} with the leading models at $\zf$ and $\bfo$ is shown in \cite{GH1}. 

\subsubsection{Terms at $\lb$}\label{termsatlb}
Let $\tau$ be the half-density in \eqref{lblbo}. 
Near $\lb$, we choose $G$ such that $e^{-i/\rho}G$ is polyhomogeneous. More precisely, we choose $G$ 
of the form $a e^{i/\rho} \rho^{(n-1)/2} |\tau|^{1/2}$
 where $a$ is polyhomogeneous, with index sets 
$\mc{F}_{\bfo} = \mc{E}_{\bfo}(Q_b) - 2, \mc{F}_{\lbo} = \mc{E}_{\lbo}(Q_b) - 1$ at $\bfo, \lbo$ and with the $C^\infty$ index set $0$ at $\lb$, and with leading behaviour at $\bfo$ and at $\lbo$ chosen to match the models already specified at those boundary hypersurfaces. This is possible since  the models at adjacent faces $\bfo$ and $\lbo$ are both of the form $e^{i/\rho}$ times a polyhomogeneous half-density; for $\bfo$ this follows from the Legendrian description of the kernel in Section~\ref{exactcone}, while for $\lbo$ it follows from standard asymptotics of Hankel functions as their
argument tends to infinity, as observed above Remark~\ref{summary}. 

A standard computation in scattering theory (see \cite[(4.30) -- (4.31)]{HV2} for example) shows that if we apply $(P - \lambda^2)$ to a kernel of the form $$\tilde a e^{i/\rho} \rho^{(n-1)/2 + k} |\tau|^{1/2},$$ with $\tilde a$ polyhomogeneous and with index set $0$ at $\lb$, the result is a kernel of the form $$\lambda^2  b e^{i/\rho} \rho^{(n-1)/2+ k + 1} |\tau|^{1/2},$$ where $b \,  |_{\lb} = -2ik a \, |_{\lb}$. Using this iteratively, we can solve away the error term at $\lb$ to infinite order. In fact, applying this with $k=0$ shows that the result of applying $P - \lambda^2$ to $$a e^{i/\rho} \rho^{(n-1)/2} |\tau|^{1/2}$$ with $a$ is as above vanishes to $3$ orders better at $\lbo$ and $\bfo$, and $2$ orders better at $\lb$. (We gain three orders at $\lbo$ and $\bfo$ since the operator itself vanishes to second order, and the leading models at these faces are killed by the corresponding induced operator, which leads to a gain of an additional order.) This error term c!
 an be solved away iteratively at $\lb$ with terms of the form 
$$\tilde a e^{i/\rho} \rho^{(n+1)/2 + k} |\tau|^{1/2}, \quad k = 2, 3, \dots
$$ 
and where $\tilde a$ vanishes $1$ order better at $\bfo$ and one order better at $\lbo$ compared to $a$, i.e.\ the correction terms do not affect the leading models at $\bfo$ and $\lbo$ at all. In this way we can remove the error term at $\lb$ completely, i.e.\ so that it vanishes to infinite order there.

We conclude that we can construct a kernel $\Gb$ such that 
$$
(P - \lambda^2) (\Gb) - (1 - \chi(\rho)) = E_b \in \phgc_{\mcE}(\MMkb; \Omega_{k,b}^{1/2}(\MMkb)), 
$$
such that $\min \mcE_{\zf} = 1$, $\min \mcE_{\bfo} = 1$, $\min \mcE_{\lbo} = \nu_0 + 2$, $\min \mcE_{\rbo} = \nu_0$, $\mcE_{\lb} = \emptyset$, and vanishing  in a neighbourhood of $\bfc, \rb$. 

\subsection{Construction of $\Gsc$}
This is supported away from $\lbo \cup \zf$. Initially, we work in a smaller region, which is supported close to $\bfc$ --- say in the region where $\rho, \rho' < \epsilon$. In these coordinates, the metric can be written 
$$
g = 
\lambda^{-2} (d\rho^2/\rho^4 + h(\lambda \rho)/\rho^2) \equiv \lambda^{-2} dg_\lambda.
$$
 Our operator $P - \lambda^2$ can be written in the $(\rho, y)$ coordinates as $\lambda^{2} (\Delta_\lambda +V_0 \rho^2 + \lambda \rho^3 W - 1)$ in the $\rho$ coordinates, where $\Delta_\lambda$ is the Laplacian with respect to the metric $g_\lambda$. Thus, our equation $(P - \lambda^2) G = \Id$ is equivalent to 
$$
(\Delta_\lambda +V_0 \rho^2 + \lambda \rho^3 W - 1) G = \lambda^{-2} \Id. 
$$
Since this operator has coefficients depending smoothly on $\lambda$ down to $\lambda = 0$ in this region, we can perform the first part of the parametrix construction in \cite{HV2} (that part in Section 4) 
uniformly in $\lambda$, obtaining a Legendre distribution polyhomogeneous in $\lambda$ with index set $-2$ at $\bfo$. We give just a sketch of this construction here, referring to \cite{HV2} for full details. 

\subsubsection{Pseudodifferential term}
We begin by choosing a pseudodifferential operator $\Gsc_1 \in \Psi^{-2, (-2, 0,0); *}(M; \Omegakbh)$,  in the calculus defined in \cite{GH1}, 
that solves away the singularity along the diagonal, in the equation
$$
(P - \lambda^2) G_1 =\chi(\rho'). 
$$
See Section 4.1 of \cite{HV2} and Section 3.1 of \cite{GH1}. 

\subsubsection{Intersecting Legendre term}
Next we consider the error term $\Esc_1 = (P - \lambda^2) \Gsc_1$. On the space $\MMkb$, it is a Legendre distribution associated to $\Nscstar \Diagb$. We can solve this away (using Proposition~\ref{ex-int} in place of Proposition 3.2 from \cite{HV2}) by adding to $\Gsc_1$ an intersecting Legendre distribution $$\Gsc_2 \in I^{-1/2, \infty, \infty; \mcB}(\MMkb, (\Nscstar \Diagb, L^{\bfc}); \Omegab),$$ obtaining a parametrix $\Gsc_2$ with an error term $E_2 \in I^{-1/2, \infty, \infty; \mcE}(\MMkb, L^{\bfc}; \Omegab)$ that is Legendre with respect to $L^{\bfc}_+$, microsupported away from $\Nscstar \Diagb$, and supported away from $\lb$ and $\rb$. Here, $\mcB_{\bfo} = -2$, but $\mc{E}_{\bfo} = 0$ (we gain two orders at $\bfo$ because the operator $P - \lambda^2$ vanishes to second order there). 
See Section 4.2 of \cite{HV2}. 

\subsubsection{Conic Legendre term}
We can solve away the error term $\Esc_2$ by adding to $\Gsc_2$ a  Legendre distribution in the space $I^{m, p ; r_{\lb} , r_{\rb} ; \mcA}(\MMkb, (L^{\bfc}, L^\sharp)$, with $m=-1/2$, $p = (n-2)/2$, $r_{\lb} = r_{\rb} = (n-1)/2$,  associated to the conic pair of Legendre submanifolds $(L^{\bfc}, L^\sharp)$ (using Proposition~\ref{ex-conic} in place of Proposition 3.5 from \cite{HV2}). We then obtain a parametrix $\Gsc_3$ with  error term $\Esc_3$  Legendre with respect to $L^\sharp$ only: $\Esc_3 \in I^{p , r_{\lb} + 2 , r_{\rb} \mc{E}; \mc{B}}(\MMkb, L^\sharp; \Omegab)$. See Sections 4.3 and 4.4 of \cite{HV2}. 

\subsubsection{Correction at $\bfo$}
Now we make a step that is absent from the argument in \cite{HV2}; we correct 
the leading behaviour at $\bfo$ to the exact conic resolvent. We observe that the pseudodifferential singularities of the parametrix constructed above as well as those at $\Nsfstar \Diagb$ and at the propagating Legendrian $L^{\bfc}_+$ are uniquely determined. By contrast, the singularities at $L^\sharp$ are not uniquely determined (although the \emph{singularities} of the symbol on $L^\sharp$ where $L^\sharp$ meets $L^{\bfc}_+$ are determined --- this subtle point is explained in \cite{MZ}). Thus, the difference
$$
F_{\bfo}^{-2} := (\Gsc_3)_{\bfo}^{-2} - (\Pconic - (1 + i0)^2)^{-1},
$$
is Legendre with respect to $L^\sharp$ only. (To clarify the notation in this expression, the superscripts $-2$ are the coefficients of $\lambda^{-2}$ at $\bfo$, while the superscript $-1$ is a power.) By Proposition~\ref{bfo-rest}, this is the boundary value of a term $F \in I^{p; r_{\lb}, r_{\rb}; \mcA}(\MMkb, L^\sharp_+; \Omegakbh(\MMkb))$. Let $\Gsc_4 = \Gsc_3 + F$. 
Now the error term $\Esc_4 = (P - \lambda^2) (\Gsc_4)$ is better: it vanishes to order $1$ at $\bfo$, i.e.\ we may now take $\mc{E}_{\bfo} = 1$.  

\subsubsection{Leading terms at $\rbo$} 
To match with $(\Gsc_4)^{-2}_{\bfo}$, we define $(\Gsc)^{\rho}_{\nu_j - 1}$, for all $j$ such that $\nu_j \leq 1$, to be given by the second line of \eqref{Gbrbo}, with the factor $1 - \chi(\rho')$ replaced by $\chi(\rho')$.

\subsubsection{Solving away outgoing errors at $\bfc$ and $\lb$}
We now solve away all outgoing errors at $\bfc$ and $\lb$ as in Section 4.5 of \cite{HV2} and Section~\ref{termsatlb} above, by adding to $\Gsc_4$ a suitable Legendre distribution associated to the outgoing Legendrian $L^\sharp$, obtaining $\Gsc_5$. Since the error term $\Esc_4$ already vanishes to order $1$ at $\bfo$, the correction terms will be at order $-1$ at $\bfo$ and therefore do not affect the leading behaviour of $\Gsc_4$ at $\bfo$ (which are at order $-2$) at all. The new error term  $\Esc_5$ is such that $e^{-i/\rho'} \Esc_5$ is polyhomogeneous on $\MMkb$ and vanishes to order order $1$ at $\bfo$, order $\infty$ at $\lb$ and $\bfc$,  order $\nu_0$ at $\rbo$ and $(n-1)/2$ at $\rb$.

\subsection{Correction term and true resolvent}
Let our parametrix $G$ be given by $G = \Gb + \Gsc_5$. Then the error term $E = (P - \lambda^2) G - \Id$ is such that $e^{-i/\rho'} E \in \mathcal{A}_{\mc{E}}(\MMkb; \Omegakbh(\MMkb))$ is polyhomogeneous on $\MMkb$ such that $\min \mc{E}_{\zf} = 1$,
$\min \mc{E}_{\bfo} = 1$, $\min \mc{E}_{\lbo} = \nu_0 + 2$,
$\min \mc{E}_{\rbo} = \nu_0$, 
$\min \mc{E}_{\rb} = (n-1)/2$, and $\mc{E}_{\bfc} = \mc{E}_{\lb} = \emptyset$. 
If we express this in terms of a half-density of the form $|dg_b dg_b' d\lambda/\lambda|^{1/2}$, which lifts to $\MMkb$ to be a smooth nonvanishing b-half-density, then the orders of vanishing are 
as above except at $\rb$, where it changes to $-1/2$. 
 (Note: by changing to a b-half-density the order of vanishing is reduced by $n/2$ at $\lb$ and $\rb$, and by $n$ at $\bfc$; however, because we already have infinite order vanishing at $\lb$ and $\bfc$, only the change at $\rb$ is visible.) Since b-half-densities are square-integrable precisely when the order of vanishing is positive, it follows that the error term $E$ is Hilbert-Schmidt on $x^l L^2(M)$ for every $l > 1/2$ and each $\lambda > 0$, with the Hilbert-Schmidt norm tending to zero as $\lambda \to 0$. In particular, it is compact, and invertible for sufficiently small $\lambda$. 
Consequently the true resolvent $R(\la) = (P - (\lambda + i0)^2)^{-1}$ is given by $G(\Id + E)^{-1}$.  

We next analyze the structure of the correction term. Define $S$ (for sufficiently small $\lambda$) by  
\begin{equation}
(\Id + E)^{-1} = \Id + S;
\label{Sdefn}\end{equation}
 then the correction term is $GS$. 
Let us define $E_{\phg}$ to be the kernel $E$ conjugated by $e^{i/\rho}$: $E_{\phg} = e^{i/\rho} E e^{-i/\rho'}$. 
Also let $S_{\phg} = e^{i/\rho} S e^{-i/\rho'}$. 
We have, for any $N$, 
$$
S = \sum_{j=1}^{2N} (-1)^j E^j + E^N S E^N \implies S_{\phg} = \sum_{j=1}^{2N} (-1)^j E_{\phg}^j + E_{\phg}^N S_{\phg} E_{\phg}^N. 
$$
By the results of \cite{GH1}, 
$E_{\phg}^j$ is polyhomogeneous conormal with index family $\mcE^{(j)}$ where index family $\mcE^{(j+1)}$ is given at $\bfo, \lbo, \rbo, \zf$ inductively by
\begin{equation}\begin{aligned}
(\mc{E}^{(j)}_{\lbo}+\mc{E}_{\zf})&\bar{\cup}(\mc{E}^{(j)}_{\bfo}+\mc{E}_{\lbo}) \text{ at } \lbo, \\
(\mc{E}^{(j)}_{\rbo}+\mc{E}_{\bfo})&\bar{\cup}(\mc{E}^{(j)}_{\zf}+\mc{E}_{\rbo}) \text{ at } \rbo, \\
(\mc{E}^{(j)}_{\bfo}+\mc{E}_{\bfo})&\bar{\cup}(\mc{E}^{(j)}_{\lbo}+\mc{E}_{\rbo}) \text{ at } \bfo, \text{ and }  \\
(\mc{E}^{(j)}_{\zf}+\mc{E}_{\zf})&\bar{\cup}(\mc{E}^{(j)}_{\rbo}+\mc{E}_{\lbo}) \text{ at } \zf.
\end{aligned}\label{Ejformulae}\end{equation}
From \eqref{Ejformulae} it is straightforward to prove by
induction that 
\begin{equation}\begin{gathered}
\min \mc{E}^{(j)}_{\lbo} \geq \nu_0 + 2 + j,  \quad 
\min \mc{E}^{(j)}_{\bfo} \geq 1+j, \\
\min \mc{E}^{(j)}_{\rbo} \geq \nu_0+j, \quad 
\mc{E}^{(j)}_{\zf} \geq j,   \\
\mc{E}^{(j)}_{\bfc} = \mc{E}^{(j)}_{\lb} = \emptyset, \quad \mc{E}^{(j)}_{\rb} = (n-1)/2. 
\end{gathered}\label{Ejinduction}\end{equation}
It follows that the index family $\Ebar$ defined by $\Ebar_\bullet = \cup_j \mc{E}^{(j)}_\bullet$ is well-defined. We show

\begin{lem} The kernel $S_{\phg}$ is polyhomogeneous on $\MMkb$ with index family $\Ebar$. 
\end{lem}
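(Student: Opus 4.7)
The plan is to exploit the Neumann-type identity
\[
S_{\phg} = \sum_{j=1}^{2N} (-1)^j E_{\phg}^j + E_{\phg}^N S_{\phg} E_{\phg}^N
\]
for each $N$, and show that the finite sum is polyhomogeneous with index family contained in $\Ebar$, while the remainder $E_{\phg}^N S_{\phg} E_{\phg}^N$ becomes arbitrarily regular and small near each boundary face as $N \to \infty$. A standard asymptotic summation (Borel-type) argument will then produce the claimed polyhomogeneous expansion.

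First I would observe that by the induction already stated via \eqref{Ejformulae} and \eqref{Ejinduction}, each $E_{\phg}^j$ is polyhomogeneous on $\MMkb$ with index family $\mcE^{(j)}$, and the leading orders at $\bfo, \lbo, \rbo, \zf$ grow linearly in $j$. Hence for every fixed $K \in \RR$, only finitely many of the $\mcE^{(j)}$ contribute elements $(\beta,k)$ with $\Re\beta \le K$ at any given face, which ensures that $\Ebar = \bigcup_j \mcE^{(j)}$ is a bona fide index family, and that the partial sums $\sum_{j=1}^{2N}(-1)^j E_{\phg}^j$ stabilize on any truncation of the expansion at any face as $N$ grows. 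This delivers a candidate polyhomogeneous expansion for $S_{\phg}$ at each face, consistent with $\Ebar$.

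The main work is to control the remainder $E_{\phg}^N S_{\phg} E_{\phg}^N$. Here I would combine two pieces of information: (i) $S$ is a bounded operator on $x^l L^2(M)$ for every $l > 1/2$ (at fixed small $\lambda$ this follows from $(\Id+E)^{-1} = \Id + S$ together with the Hilbert–Schmidt smallness of $E$, while uniformity in $\lambda$ down to $0$ is obtained by tracking the Hilbert–Schmidt norm via \eqref{Ejinduction}); and (ii) the kernel of $E^N$ is polyhomogeneous with vanishing orders at $\bfo, \lbo, \rbo, \zf$ that grow with $N$, together with $(n-1)/2$-order outgoing vanishing at $\rb$ and infinite-order vanishing at $\bfc$ and $\lb$. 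Composing $E^N$ on the left and right with the bounded kernel $S$ and applying Melrose's pushforward theorem on the b-triple space $\MMb$--lifted to the $\lambda$-variable--as was done in Section~\ref{correction}, I would conclude that $E_{\phg}^N S_{\phg} E_{\phg}^N$ is polyhomogeneous up to an error that is of class $C^{N'(N)}$ with $N'(N) \to \infty$, and whose order of vanishing at every boundary face exceeds any prescribed threshold once $N$ is large enough. The key geometric point is that the outgoing Legendrian/oscillatory behaviour of $E$ at $\rb$ is killed in the composition $E \cdot S$ because $S$ decays rapidly as the centre variable of the triple product tends to infinity, exactly as in the metric cone argument in Section~\ref{correction}; this is why we conjugate by $e^{\pm i/\rho}$ from the outset and work with $E_{\phg}, S_{\phg}$.

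Combining these two inputs, for any threshold $K$, taking $N$ sufficiently large guarantees that $E_{\phg}^N S_{\phg} E_{\phg}^N$ and all of its derivatives up to a chosen order vanish to order $\ge K$ at every face. Consequently the truncations of $\sum_{j=1}^{2N}(-1)^j E_{\phg}^j$ at each face agree with the corresponding truncation of $S_{\phg}$ modulo terms vanishing to order $K$, uniformly on compacts. Letting $N \to \infty$, one reads off that $S_{\phg}$ is polyhomogeneous with index family contained in $\Ebar$. The main obstacle I anticipate is verifying uniformity of the Hilbert–Schmidt (or weighted $L^2$) bounds on $S$ as $\lambda \to 0$, and making the composition/pushforward argument for $E_{\phg}^N S_{\phg} E_{\phg}^N$ precise on $\MMkb$ — in particular checking that no hidden logarithmic terms appear beyond those already accounted for in the extended unions in \eqref{Ejformulae}.
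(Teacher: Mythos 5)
Your high-level decomposition $S_{\phg}=\sum_{j=1}^{2N}(-1)^jE_{\phg}^j+E_{\phg}^NS_{\phg}E_{\phg}^N$ matches the paper's, but your treatment of the remainder term has two genuine gaps.

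First, you appeal to Melrose's pushforward theorem to handle $E_{\phg}^N S_{\phg} E_{\phg}^N$. The pushforward theorem requires the factors being composed to be polyhomogeneous, but polyhomogeneity of $S_{\phg}$ is precisely what is being proved — you cannot use it here without circularity. (In the metric cone section the pushforward could legitimately be applied to $(G-G_1)S$, but only because one had \emph{already} shown $S\in\Psi^0_1+\Psi^\infty$, a polyhomogeneous class, by a prior argument.) The paper avoids this by using the characterization of polyhomogeneous conormality from \cite{cocdmc}: one only needs to show that the remainder, after application of a suitable differential operator, is \emph{conormal} (not polyhomogeneous) with respect to multiweights tending to infinity. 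That conormality is then established directly by applying boundary vector fields to a double integral kernel representation of $E_{\phg}^N S_{\phg} E_{\phg}^N$ and using Cauchy--Schwarz, treating $S_{\phg}$ only as a Hilbert--Schmidt operator. The key technical input, which your proposal omits, is that the vector field $\lambda\partial_\lambda$ acts on all three factors including $S_{\phg}$, so one needs to know $(\lambda\partial_\lambda)^j S_{\phg}$ is Hilbert--Schmidt uniformly in $\lambda$; the paper proves this inductively from the identity $\lambda\partial_\lambda S_{\phg}=-(\Id+S_{\phg})(\lambda\partial_\lambda E_{\phg})(\Id+S_{\phg})$.

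Second, your claim that the order of vanishing of $E_{\phg}^N S_{\phg} E_{\phg}^N$ ``at every boundary face exceeds any prescribed threshold once $N$ is large enough'' is false at $\rb$: by \eqref{Ejinduction}, $\min\mcE^{(j)}_{\rb}=(n-1)/2$ is fixed, independent of $j$, so compositions with $E_{\phg}$ do not improve decay at $\rb$. This is exactly why the paper introduces the differential operator $Q_N = \chi(\rho')\prod_{j=0}^{N-1}\big(\rho'\frac{d}{d\rho'}-\frac{n-1}{2}-j\big)$, which kills the first $N$ terms of the expansion at $\rho'=0$; only after applying $Q_N$ does one obtain a multiweight that grows at $\rb$ as well. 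Without this device, your argument cannot conclude. Your description of $S$ ``decaying rapidly as the centre variable tends to infinity'' handles the $\lb\cup\bfc$ oscillation but does not address the $\rb$ obstruction.
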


\begin{proof} Let $Q_N$ be the differential operator 
$$
Q_N = \chi(\rho') \prod_{j=0}^{N-1} \big(\rho'\frac{d}{d\rho'} - \frac{n-1}{2} - j \big) , \quad \rho' = \frac{x'}{\lambda},
$$
where $\chi(\rho')$ is a smooth function equal to $1$ for $\rho' \leq 1$ and $0$ for $\rho' \geq 2$. 
This operator has the property that it maps a function of $\rho'$ of the form 
${\rho'}^{(n-1)/2} \CI(\rho')$ into a function of the form ${\rho'}^{(n-1)/2 + N} \CI(\rho')$, that is, it kills the first $N$ terms of the expansion of a function in 
${\rho'}^{(n-1)/2} \CI(\rho')$ at $\rho' = 0$. 
Using the definition of polyhomogeneous conormality given in \cite{cocdmc}, it is enough to show, for every positive integer $N$, that $S_{\phg}$ can be written as a sum $S_{\phg, N, 1} + S_{\phg, N, 2}$, where $S_{\phg, N, 1} \in \mathcal{A}_{\Ebar}(\MMkb; \Omegakbh)$, and $Q_N S_{\phg, N, 2}$ is conormal (as opposed to polyhomogeneous conormal) on $\MMkb$ with respect to a multiweight $\frak{r} = \frak{r}_N$, all of whose entries tend to infinity with $N$. 

To do this, we write 
\begin{equation}
S_{\phg} = \sum_{j=1}^{2N} (-1)^j E_{\phg}^j + E_{\phg}^{N} S_{\phg} E_{\phg}^{N} := S_{\phg, N, 1} + S_{\phg, N, 2}.
\label{SEt}\end{equation}
Clearly, $S_{\phg, N, 1}$ is polyhomogeneous conormal with respect to the index set $\Ebar$. We claim that $Q_N S_{\phg, N, 2}$ is conormal with respect to multiweights\footnote{Note that the rate of decay of $E_{\phg}^N S_{\phg} E_{\phg}^N$ is $N + O(1)$ at all boundary hypersurfaces except for $\rb$, as $N \to \infty$, so the purpose of $Q_N$ is to force an improvement of the rate of decay at $\rb$.} $r_{\lbo} = r_{\rbo} = r_{\bfo} = r_{\zf} = r_{\lb} = r_{\rb} = N$, $r_{\bfc} = 2N$.\footnote{It would actually be true with $r_{\lbo} = \nu_0 + N + 2$, $r_{\rbo} = \nu_0 + N$, $r_{\bfo} = 1 + N$, $r_{\zf} = N$, $r_{\bfc} = r_{\lb} = \infty$, $r_{\rb} = (n-1)/2 + N$ but it is sufficient, and easier, to show the weaker claim.}  We can take
$$
\Big(  r_{\lbo} r_{\rbo} r_{\bfo} r_{\zf} r_{\lb} r_{\rb} \Big)^N \rho_{\bfc}^{2N} = \lambda^N \langle \frac{x}{\lambda} \rangle ^N \langle \frac{x'}{\lambda} \rangle ^N,
$$

To show the conormality of $Q_N S_{\phg, N, 2}$ with respect to these multiweights, we consider $m$ vector fields $W_1, \dots, W_m$ on $\MMkb$, tangent to the boundary, where $m \in \nn$ is arbitrary. We must show that 
\begin{multline}
W_1 \dots W_m \big( Q_N S_{\phg, N, 2} \big) \in \Big(  r_{\lbo} r_{\rbo} r_{\bfo} r_{\zf} r_{\lb} r_{\rb} \Big)^N \rho_{\bfc}^{2N} L^\infty(\MMkb) \\ =\lambda^N \langle \frac{x}{\lambda} \rangle ^N \langle \frac{x'}{\lambda} \rangle ^N L^\infty(\MMkb).
\label{tobeproved}\end{multline}

We next observe that vector fields on $\MMkb$  tangent to the boundary are generated, over $\CI(\MMkb)$, by b-vector fields on $M$, lifted to $\MMkb$ by either the left or the right projection, and by $\lambda \partial_\lambda$. So it is sufficient to prove \eqref{tobeproved} when the $W_i$ are generating vector fields as just described. Notice that, writing $S_{\phg, N, 2} = E_{\phg}^{N} S_{\phg} E_{\phg}^{N}$, the $W_i$ lifted from $M$ by the left, resp. right, factor act on the left, resp. right, factor of $E_{\phg}^{N}$.  Notice also that the operator $Q_N$ acts only on the right factor of $E_{\phg}^{N}$ and increases the order of vanishing at $\rb$ to order $(n-1)/2 + N$. However, the vector field $\lambda \partial_\lambda$ acts on all three factors of $S_{\phg, N, 2}$, including the middle factor $S_{\phg}$.

We have already seen that $S_{\phg}$ is, for each fixed $\lambda > 0$, a Hilbert-Schmidt operator on $x^l L^2(M)$, $l > 1/2$, with uniformly bounded Hilbert-Schmidt norm. The same is true for $(\lambda \partial_\lambda)^j S_{\phg}$ for every $j$. In fact, using the identity 
$$
(\Id + E_{\phg}) (\Id + S_{\phg}) = \Id \implies E_{\phg} + E_{\phg}S_{\phg} + S_{\phg} = 0,
$$
we compute 
$$
\lambda \partial_\lambda S_{\phg} = - (\Id + S_{\phg}) \Big( \lambda \partial_\lambda E_{\phg} \Big) (\Id + S_{\phg}) ,
$$
from which it follows (using the polyhomogeneity of $E_{\phg}$) that 
$\lambda \partial_\lambda S_{\phg}$ is Hilbert-Schmidt  on $x^l L^2(M)$, $l > 1/2$, with uniformly bounded Hilbert-Schmidt norm. Proceeding inductively we can deduce this for $(\lambda \partial_\lambda)^j S_{\phg}$ for every $j$.

Now we write $z, z', w, w'$ for points in $M$, with $x(z), x(w)$, etc, denoting the corresponding boundary defining functions, and express the kernel of $Q_N S_{\phg, N, 2}$ as 
\begin{multline*}
Q_N S_{\phg, N, 2}(\lambda, z, z') = 
\int\limits_{M \times M}  \Big( x(w)^{l} E_{\phg}^{N}(\lambda, z, w) \big( x(w')^{-l} (Q_N)_{z'} E_{\phg}^{N}(\lambda, w', z') \big) \Big)  \\ \times \Big( x(w)^{-l} x(w')^{l} S_{\phg}(\lambda, w, w') \Big)  \, dg(w) \, dg(w'), \quad l > \frac1{2}. 
\end{multline*}
We then apply the vector fields $W_i$ (assumed without loss of generality to be either b-vector fields on $M$ lifted from the left or right factors, or the vector field $\lambda \partial_\lambda$) to 
this expression, and multiply by the factors 
\begin{equation}
\lambda^{-N} \langle \frac{x}{\lambda} \rangle ^{-N} \langle \frac{x'}{\lambda} \rangle ^{-N}
\label{rhofactors}\end{equation}
 from \eqref{tobeproved}. Using Cauchy-Schwarz (taking advantage of the Hilbert-Schmidt property of kernels $x(w)^{-l} x(w')^{l} S_{\phg}(\lambda, w, w')$, etc), and using the conormality of $E_{\phg}^N$ to absorb the factors from \eqref{rhofactors}, we see  that \eqref{tobeproved} is satisfied. 

\end{proof}

Now we analyze $GS$. We have 
\begin{equation}
 e^{-i/\rho} GS e^{-i/\rho'} = \Big( e^{-i/\rho} G e^{-i/\rho} \Big) S_{\phg}.
\label{GScomp}\end{equation}
 Note that $e^{-i/\rho} G e^{-i/\rho} $ is not polyhomogeneous at $\bfc$ (that is, as both $\rho$ and $\rho'$ tend to zero), but is at all other boundary hypersurfaces. However, after composing with $S_{\phg}$, this non-polyhomogeneity is killed by the rapid decrease of the kernel of $S_{\phg}$ at $\bfc$ and $\lb$, i.e.\ as its left $\rho$ variable tends to zero (just as in the discussion in the last paragraph of Section~\ref{exactcone}). Hence we can apply \cite[Proposition 2.10]{GH1} to the composition \eqref{GScomp}. We find that 
 $e^{-i/\rho} GS e^{-i/\rho'}$ is polyhomogeneous and vanishes to order  
 (at least) $1$ at $\zf$ and  $\bfo$, order $\nu_0 + 1$ at $\lbo$ and $\rbo$, $(n-1)/2$ at $\lb$ and $\rb$ and $n-1$ at $\bfc$. 
In particular, this correction term vanishes to one order higher than $G$ at $\bfo, \zf, \lbo, \rbo$. Therefore, writing $R_{\bullet}^k$ for the coefficient of $\lambda^k$ in the expansion of the resolvent at  $\bullet$, we have, with $v_0$ given by \eqref{vjdefn},
\begin{equation}\begin{aligned}
R_{\bfo}^{-2} &= G_{\bfo}^{-2} = \big(P_{\conic} - (1 + i0)^2\big)^{-1}; \\
R_{\zf}^0 &= G_{\zf}^{-1} = (x x')^{-1}  P_b^{-1}; \\
R_{\lbo}^{\nu_0 - 1} &= G_{\lbo}^{\nu_0 - 1} = \frac{i \pi}{2} (x' \rho)^{-1} v_0(y,z') \Ha_{\nu_0}^{(1)}(1/\rho)  \Big| \frac{d\rho dy}{\rho}dg'_b \Big|^{1/2},  \\
R_{\rbo}^{\nu_0 - 1} &= G_{\rbo}^{\nu_0 - 1} =\frac{i \pi}{2}(x \rho')^{-1} v_0(z,y')  \Ha_{\nu_0}^{(1)}(1/\rho')  \Big| \frac{d\rho' dy'}{\rho'}dg_b \Big|^{1/2}.
\end{aligned}\label{resmodels}\end{equation}

 
 \section{Spectral measure}
 In this section we study the spectral measure $dE_{P_+^{1/2}}(\lambda)$ of the operator $P_+^{1/2}$ and prove Theorem~\ref{mainsm}. The spectral measure is related to the resolvents $R(\lambda \pm i0)$ by
\begin{equation}
dE_{P_+^{1/2}}(\lambda) = \frac{d}{d\lambda} E_{P_+^{1/2}}(\lambda) \, d\lambda = \frac{\lambda}{\pi i} \Big( R(\lambda + i0) - R(\lambda - i0) \Big) \, d\lambda, \quad \lambda > 0.
\label{subtraction}\end{equation}
The resolvent kernel $R(\lambda \pm i0)$ is invariant under involution, i.e.\ $R(\lambda \pm i0)(z,z') = R(\lambda \pm i0)(z',z)$, and the formal adjoint of $R(\lambda + i0)$ is $R(\lambda - i0)$, i.e.\ 
$$
R(\lambda - i0)(z,z') = \overline{R(\lambda + i0)(z', z)}.
$$
It follows that the spectral measure can be expressed 
\begin{equation}
dE_{P_+^{1/2}}(\lambda)(z,z') = \frac{2\lambda}{\pi} \Im \big( R(\lambda + i0)(z,z') \big) \, d\lambda, \quad \lambda > 0.
\label{smim}\end{equation}
We now discuss cancellations that occur when the two resolvent kernels are subtracted. 

\subsection{Behaviour at diagonal}\label{behavdiag}
The diagonal singularity of the resolvent kernel is completely determined by the full symbol of $P$. Consequently, the diagonal
singularity cancels in the expression \eqref{subtraction}, and the spectral measure is smooth across the diagonal. Another way to see this is that the spectral measure satisfies an elliptic equation $(P - \lambda^2) dE(\lambda) = 0$, so it cannot have any local singularities.

Moreover, the difference between the outgoing ``$R_2$" piece and the incoming  ``$R_2$" piece (in the terminology of Theorem~\ref{mainres}), which are intersecting Legendre distributions associated to $(\Nsfstar \Diagb, L^{\bfc}_+)$ and $(\Nsfstar \Diagb, L^{\bfc}_-)$ respectively, is a Legendrian associated to the propagating Legendrian $L^{\bfc}$ alone. This is because the left Hamilton vector field \eqref{Vl} is nonzero at $L^{\bfc} \cap \Nsfstar \Diagb$, and tangent to $L^{\bfc}$. Since Legendrian regularity propagates on $L^{\bfc}$ along the Hamilton vector field, the spectral measure is Legendre across $\Nsfstar \Diagb$, i.e.\ it is a Legendre distribution on $\MMkb$ associated to the intersecting pair of Legendre submanifolds with conic points $(L^{\bfc}, L^\sharp_+ \cup L^\sharp_-)$.

\subsection{Leading asymptotics at $\bfo, \lbo,\rbo$}
We start by giving the leading asymptotics of the spectral measure at the boundary hypersurfaces $\bfo, \lbo,\rbo$. 
At $\bfo$ (and also at $\lbo,\rbo$) we have already determined the leading asymptotic of the outgoing resolvent --- see \eqref{resmodels}. Since the imaginary part of $i \Ha_{\nu}^{(1)}(r)$ is $J_{\nu}(r)$, we see from \eqref{smim} that the leading asymptotic of the spectral measure $dE_{P_+^{1/2}}(\lambda)$ at $\bfo$ is at order $-1$, and is given by ${\rm Im}(G_{\bfo}^{-2})$, that is 
\begin{equation}\label{sbfo}
(dE)_{\bfo}^{-1} = dE_{\Pconic^{1/2}}(1) = 
r r'  \sum_{j=0}^\infty\Pi_{E_j}(y,y')J_{\nu_j}(r)J_{\nu_j}(r')  \left|\frac{dr \, dr'}{rr'}\right|^\demi .
\end{equation}
Similarly, at $\lbo$, $2/\pi$ times the imaginary part of the resolvent kernel yields, as the leading asymptotic of the spectral measure at $\lbo$,
\begin{equation}\label{slbo}
(dE)_{\lbo}^{\nu_0} = (x' \rho)^{-1} v_0(y,z') J_{\nu_j}(1/\rho)  \Big| \frac{d\rho dy}{\rho}dg'_b \Big|^{1/2},  
\end{equation}
where $v_0$ is as in \eqref{vjdefn}. The $\rbo$ leading asymptotic has the same form by symmetry of the resolvent kernel.
\begin{equation}\label{srbo}
(dE)_{\rbo}^{\nu_0} = (x \rho')^{-1} v_0(y',z) J_{\nu_j}(1/\rho')  \Big| \frac{d\rho' dy'}{\rho'}dg_b \Big|^{1/2}. 
\end{equation}
 
\subsection{Expansion at $\zf$}\label{exp}
We will show that the expansion of the resolvent kernel at $\zf$ is real up to the term at order  $\la^{2\nu_0}$. 
An immediate consequence is a determination of the rate of vanishing of the  spectral measure at $\zf$:

\begin{prop}\label{cancellation} The spectral measure $dE_{P_+^{1/2}}(\lambda)$ for the operator $P_+^{1/2}$ vanishes to order $2\nu_0 + 1$ at $\zf$. More particularly, it has expansion at $\zf$ of the form
\[dE_{P_+^{1/2}}(\lambda)= \Big(\la^{2\nu_0+1} w(z)  w(z') |dg dg'|^{1/2} + O(\la^{\min(2\nu_0+2,2\nu_1+1)}) \Big)d\la \]
where $w$ is a solution of $P w = 0$, and $ w \sim x^{n/2 - 1 - \nu_0} W(y)$ as $x\to 0$ for some smooth function $W$ on $\pl M$.
\end{prop}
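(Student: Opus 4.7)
My plan is to identify the leading low-energy behaviour at $\zf$ of the spectral measure $dE(\lambda) = (2\lambda/\pi)\Im R(\lambda+i0) d\lambda$ by analysing the imaginary part of the polyhomogeneous expansion of $R(\lambda+i0)$ at $\zf$, using Theorem~\ref{mainres} together with the explicit Bessel-function models at the adjacent faces $\bfo, \lbo, \rbo$. Since $P$ has real coefficients, $R(\lambda-i0) = \overline{R(\lambda+i0)}$ as kernels, so only the imaginary part of the $\zf$ expansion contributes to $dE$. The leading term $R_{\zf}^0 = (xx')^{-1} P_b^{-1}$ is manifestly real, so the proof reduces to: (i) showing no imaginary term appears in the $\zf$ expansion of $R(\lambda+i0)$ below order $\lambda^{2\nu_0}$; and (ii) identifying the first imaginary contribution, at order $\lambda^{2\nu_0}$, and showing its leading coefficient has the required form $w(z)w(z')$.

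For step (i), I would track how imaginary contributions enter the $\zf$ expansion by matching at the corners $\bfo\cap\zf$, $\lbo\cap\zf$ and $\rbo\cap\zf$. Each of the leading models \eqref{bfo}, \eqref{Gbrbo}, \eqref{resmodels} has the form $(i\pi/2)\Ha^{(1)}_{\nu_j}(z)$ times real factors, and for small $z$ this splits as $(i\pi/2)\Ha^{(1)}_{\nu_j}(z) = -(\pi/2)Y_{\nu_j}(z) + i(\pi/2)J_{\nu_j}(z)$. Using \eqref{Bessel-asymptotics}, the real part $-(\pi/2)Y_{\nu_j}$ has expansion in orders $z^{-\nu_j+2k}$ and $z^{\nu_j+2k}$, while the imaginary part $(\pi/2)J_{\nu_j}$ only has terms $z^{\nu_j+2k}$. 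Substituting $z = 1/\rho = \lambda/x$ (or $\lambda/x'$) and combining with the $\lambda^{-2}$ prefactor at $\bfo$ (where the two small-argument Bessel factors multiply) or the $\lambda^{\nu_0-1}$ prefactor at $\lbo$ and $\rbo$, a direct bookkeeping shows that the imaginary contributions to the $\zf$ expansion first appear at order $\lambda^{2\nu_0}$, while the real parts match exactly the formal perturbative series $R_{\zf}^{2k}$ obtained by iterating $P_b^{-1}$ against $r^2$. The correction term $GS$ from Section~\ref{lerc} vanishes to strictly higher order than $G$ at every face touching $\lambda = 0$, so it cannot produce any earlier imaginary contribution.

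For step (ii), I would extract the leading coefficient from the explicit spectral-measure model at $\bfo$, \eqref{sbfo}. As $r, r' \to 0$, the dominant $j = 0$ summand gives, via $J_{\nu_0}(r) \sim (r/2)^{\nu_0}/\Gamma(\nu_0+1)$,
\[
(dE)^{-1}_{\bfo} \sim \frac{(rr')^{\nu_0+1}}{4^{\nu_0}\Gamma(\nu_0+1)^2}\,\Pi_{E_0}(y, y')\,\Big|\tfrac{dr\, dr'}{r r'}\Big|^{1/2}.
\]
Substituting $r = \lambda/x$, $r' = \lambda/x'$ and performing the half-density conversion from $|dg\, dg'\, d\lambda/\lambda|^{1/2}$ on $\MMkb$ to $|dg\, dg'|^{1/2}\otimes |d\lambda|^{1/2}$ extracts the $\lambda^{2\nu_0+1}$ coefficient at $\zf$. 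Decomposing $\Pi_{E_0}(y,y') = \sum_i W_i(y)W_i(y')$ in an orthonormal eigenbasis then exhibits the coefficient as $\sum_i w_i(z)w_i(z')$, where each $w_i$ has the boundary asymptotic $w_i \sim x^{n/2-1-\nu_0} W_i(y)$ determined by the indicial roots of $P_b$. That each $w_i$ solves $P w_i = 0$ follows from the characterization of $v_0$ in \eqref{vjdefn} together with $P = xP_b x$, or equivalently from the fact that $(P - \lambda^2)dE(\lambda) = 0$ forces the leading coefficient at $\zf$ to lie in the kernel of $P$ applied to either variable. The remainder $O(\lambda^{\min(2\nu_0+2, 2\nu_1+1)})$ is identified by retaining the next Bessel expansion term of $J_{\nu_0}$ (giving $\lambda^{2\nu_0+2}$) and the $j = 1$ summand in \eqref{sbfo} (giving $\lambda^{2\nu_1+1}$).

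The main obstacle will be step (i), the global verification of reality through order $\lambda^{2\nu_0 - 1}$: besides the three corner analyses, one must check that no fractional-order imaginary term $\lambda^{2\nu_j - \epsilon}$ with $j = 0$ is introduced by higher-order matching or by the correction $GS$, which requires using the order improvements from Section~\ref{lerc} in a systematic way. Once this is in hand, step (ii) reduces to the explicit Bessel-function computation displayed above.
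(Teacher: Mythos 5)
Your overall strategy coincides with the paper's: reduce to $\Im R(\lambda+i0)$ via reality of $R_{\zf}^0$, use the Hankel--Bessel split at the corners $\bfo\cap\zf$, $\lbo\cap\zf$, $\rbo\cap\zf$ to locate the first imaginary term, and extract the leading coefficient from the $\bfo$ model restricted to $\zf$. Steps (ii) and the corner analysis in step~(i) are essentially the argument the paper gives for the parametrix $G$.

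However, there is a genuine gap in how you dispose of the correction term $GS$. You write that $GS$ ``vanishes to strictly higher order than $G$ at every face touching $\lambda=0$, so it cannot produce any earlier imaginary contribution,'' and at the end you attribute the needed verification to ``order improvements from Section~\ref{lerc}.'' This does not work: $GS$ vanishes only to order $1$ at $\zf$, which (for $\nu_0>1/2$, the typical case) is strictly below $2\nu_0$. An order-improvement statement says nothing about the phase of the coefficients between $\lambda^1$ and $\lambda^{2\nu_0}$, so $GS$ could a priori introduce imaginary terms there and destroy the cancellation. What is actually required, and what the paper proves, is a \emph{reality} argument propagated through the algebra of the construction: one shows that $E=(P-\lambda^2)G-\Id$ has a real expansion at $\zf$, then (crucially) that because $G$ was chosen so that its leading imaginary piece $\lambda^{2\nu_0}A(z,z')$ satisfies $PA=0$, the imaginary part of $E$ actually starts at the improved order $\min(2\nu_0+2,2\nu_1)$. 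Only with this improvement does the inductive composition bookkeeping \eqref{Ejformulae} keep $E^j$, hence $S$, hence $GS$ real at $\zf$ to order $2\nu_0$; without the $PA=0$ gain you lose an order with each power of $E$. This chain of reality-plus-order arguments is the heart of the proof and is absent from your proposal.

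A secondary point: you decompose $\Pi_{E_0}(y,y')=\sum_i W_i(y)W_i(y')$ and arrive at a coefficient $\sum_i w_i(z)w_i(z')$, but the proposition asserts a rank-one kernel $w(z)w(z')$. This is where the simplicity of the lowest eigenvalue $\nu_0^2$ is used, which in turn rests on the standing hypothesis that $\partial M$ is connected (so the ground state of $\Delta_{\partial M}+V_0+(n/2-1)^2$ is simple and of one sign). You should state this explicitly; without it your conclusion differs from the Proposition as stated.
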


\begin{proof} 
We first show that we may choose the parametrix $G$ so that its expansion at $\zf$ up to order $\la^{2\nu_0}$  is real. This is certainly true for the pseudodifferential part of $G$; indeed, the real part of any pseudodifferential parametrix is also a pseudodifferential parametrix, since $P$ has real coefficients.  
Now consider the expansion of ${\rm Im}(G_{\bfo}^{-2})$ at the face $\zf$. 
We see from the expansion of modified Bessel functions $J_{\nu}(z)$ at $z=0$ and \eqref{sbfo}
that the imaginary part of $G_{\bfo}^{-2}$ has index set $\mc{Z}$ at $\zf$ 
of the form $\mc{Z}=\cup_{\nu_j\leq \nu_0+1/2}(2\nu_j+2) \cup \mc{Z}'$ where $\mc{Z}'\geq 2\nu_0+3$ and has no log terms; in particular if 
$\pl M=S^{n-1}$ and  $V_0=0$, one has $\mc{Z}=n+\nn_0$. 
Similarly, we see from \eqref{srbo} and \eqref{slbo} that $G_{\lbo}^{\nu_0 - 1}$ and $G_{\rbo}^{\nu_0 - 1}$ have a real expansion at $\zf$ up to order $\nu_0 + 1$ and their imaginary part has index set at $\zf$ of the form $\mc{Y}=\cup_{\nu_j\leq \nu_0+1}(\nu_j+1)\cup \mc{Y}'$ 
where $\mc{Y}'\geq \nu_0+2$ is an index set with no log terms and  $\mc{Y}=n/2+\nn_0$ if $\pl M=S^{n-1}$ and  $V_0=0$. Also, $G_{\zf}^0$ is real, and the higher order terms $G_{\zf}^\alpha$, for $\alpha > 0$, can be chosen arbitrarily provided that they are compatible with terms specified at $\bfo, \lbo, \rbo$. 
It follows that we may choose $G$ so that the expansion at $\zf$ is real up to order $\lambda^{2\nu_0}$ (in the sense that $\lambda^{2\nu_0}$ is the first non-real term occurring), actually  with imaginary part having index set at $\zf$ 
of the form $X=\cup_{\nu_j\leq \nu_0+1/2}2\nu_0\cup \mc{X}'$ where $\mc{X}'\geq 2\nu_0+1$ has no log terms and 
$\mc{X}=n-2+\nn_0$ if $\pl M=S^{n-1}$ and $V_0=0$. We will choose $G$ so that its imaginary part has expansion at $\zf$
\begin{equation}\label{imG}
{\rm Im}(G)=\la^{2\nu_0}A(z,z')+O(\la^{\min(2\nu_0+2,2\nu_1)})
\end{equation}
for some smooth kernel $A$ on $\zf$ satisfying $PA=0$, which can be done as long as the expansion 
matches with the imaginary parts of $G^{\nu_0-1}_{\rbo},G^{\nu_0-1}_{\lbo},G_{\bfo}^{-2}$ at $\rbo,\lbo,\bfo$. 
To determine the kernel $A(z,z')$ on $\zf$, we note that the eigenvalue $\nu_0^2$ for 
$\Delta_{\partial M} + V_0 + (n/2 - 1)^2$ is simple, and the eigenfunction does not change sign, so there is a unique positive normalized eigenfunction $W(y) |dh|^{1/2}$ for this eigenvalue. Therefore, using Melrose's b-calculus as outlined in \cite[Section 2.1]{GH1}, there is a unique half-density $\tilde w |dg_b|^{1/2}$ such that $P_b \tilde w = 0$ and $\tilde w(x, y) = x^{-\nu_0} W(y) + O(x^{-\nu_0 + \epsilon})$ as $x \to 0$ for some $\epsilon > 0$; moreover, this is the only solution to $P_b u = 0$ (up to scaling) in the space $x^{-\nu_0 - \epsilon} L^2_b$, if $\epsilon > 0$ is sufficiently small, and we have $v_0(z, y') = \tilde w(z) W(y)$. We then set
$$
A(z,z') = (xx')^{-1} \tilde w(z) \tilde w(z') |dg_b dg_b'|^{1/2}.
$$
and from the expansion of \eqref{sbfo}, \eqref{srbo}, \eqref{slbo} at $\zf$, we see that the expansion of $\la^{2\nu_0}A(z,z')$ 
matches with the asymptotic of \eqref{sbfo}, \eqref{srbo}, \eqref{slbo} at $\zf$. Moreover, if $\nu_1\geq \nu_0+1$, 
the expansion of \eqref{sbfo}, \eqref{srbo}, \eqref{slbo} at $\zf$ involve no power of $\la$ between $\la^{2\nu_0}$ and 
$\la^{\min(2\nu_0+2,2\nu_1)}$, therefore $G$ can be chosen so that ${\rm Im}(G)$ satisfies \eqref{imG}.

Since $P$ has real coefficients, the error term $E=(P-\la^2)G-\Id$ has the same property as $G$, i.e.\
it has a real expansion at $\zf$ in powers of $\la$ up to $\la^{2\nu_0}$, and since $PA=0$, 
the expansion at $\zf$ of $(P-\la^2){\rm Im}(G)={\rm Im}(E)$ is actually of order $\min(2\nu_0+2,2\nu_1)$.

We next claim that the same is true for the powers $E^j$. To see this, 
we use \eqref{Ejformulae} to show inductively that the expansion at $\zf$ is real to order $\min(2\nu_0+2,2\nu_1)$, since the terms at $\lbo$ and $\rbo$ do not affect the expansion at $\zf$ until order $2\nu_0 + 2$. 

Since $S$ in \eqref{Sdefn} is given by a finite sum of the form $\sum_{j=1}^{2N} (-1)^j E^j$ up to a term that vanishes to high order $\sim N$ at $\zf$, the same property holds true for $S$. By the same arguments, $GE^j$ has a real kernel up to order $\min(2\nu_0+1,2\nu_1)$. 

Finally we show, using exactly the same method as for the powers $E^j$, that the composition $GS$ has a real expansion at $\zf$ in powers of $\la$ up to $\la^{2\nu_0}$, and since $GS \sim G\sum_{j\geq 1}(-1)^jE^j$ (as an expansion at $\zf$), we see that the resolvent $R(\lambda)$ itself has the property that it has a real expansion at $\zf$ up to order $2\nu_0$ with the leading term of $dE_{P_+^{1/2}}(\la)$  
given by (transforming back to the original scattering metric $g$)
\begin{equation}
(dE)_{\zf}^{2\nu_0+1} =  w(z)  w(z') |dg dg'|^{1/2},
\label{smzf}\end{equation}
where $w |dg|^{1/2} := x^{-1} \tilde w |dg_b|^{1/2}$ satisfies $P  w = 0$, and $ w = x^{n/2 - 1 - \nu_0} W(y) + O(x^{n/2 - 1 - \nu_0 + \epsilon})$,  $x \to 0.$ For example, if the potential function $V$ vanishes, then $W$ is constant, $\nu_0 = (n/2 - 1)$, and  $ w$ is just a constant function. 
Moreover the next asymptotic term in the expansion of $dE_{P_+^{1/2}}(\la)$ is at order $\min(2\nu_0+1,2\nu_1)$.
\end{proof}

\begin{proof}[Proof of Theorem~\ref{mainsm}] Theorem~\ref{mainsm} follows directly from Theorem~\ref{mainres} and the cancellations proved in Section~\ref{behavdiag} and Proposition~\ref{cancellation}.
\end{proof}

\begin{proof}[Proof of Corollaries~\ref{cor:waves} and \ref{propagators}] 
Using the spectral measure, we write 
\begin{equation}
\indic_{(0, \infty)}(P) \chi(P)F_t(\sqrt{P})=\int_{\rr^+} \chi(\la^2)F_t(\la)dE_{P_+^{1/2}}(\la)
\label{F_t(P)}\end{equation}
with $F_t(\la)=e^{it\la^2}$, $\cos(t\la)$ or $\sin(t\la)/\la$. Then the leading asymptotic of the kernel $\chi(P)F_t(\sqrt{P})(z,z')$ 
as $t\to \infty$ for $z,z'\in M^0$ is straightforward by using Theorem \ref{mainth}: the leading term 
$\la^{2\nu_0+1} w(z) w(z')$ of $dE_{P_+^{1/2}}(\la)$ at $\la=0$
produces the leading term in \eqref{waveexp1} or \eqref{propexp} as $t\to \infty$, while the error $O(\la^{\min(2\nu_0+2,2\nu_1+1)}))$ 
contributes at most an error term in \eqref{waveexp1} or \eqref{propexp} as $t\to \infty$, by using the fact that $dE_{P_+^{1/2}}(\la;z,z')$ is smooth in $z,z'$ and 
polyhomogeneous conormal at $\la=0$. Moreover, 
putting $F_t(\lambda) = e^{-i\lambda t}$, and using  \cite[Example 7.1.17]{Ho}, we find that 
$$
\int_0^\infty \chi(\lambda) e^{\pm it\lambda} \lambda ^{2\nu_0 + 1} \, d\lambda = \Gamma(2\nu_0 + 2) e^{\pm i \pi (\nu_0 + 1)} t^{-2(\nu_0 + 1)} + O(t^{-\infty}), \quad t \to \infty,
$$
which gives the constants in \eqref{waveexp1}. 
\end{proof}

\begin{remark}\label{manyends} If we do not assume that the boundary $\partial M$ is connected, then our analysis works much as above. However, if $\partial M$ has more than one component, the lowest eigenvalue $\nu_0$ of the operator $\Delta_{\partial M} + V_0 + (n/2 - 1)^2$ need not be simple, and the leading asymptotic $(dE)_{\zf}^{2\nu_0+1}$ would then have rank equal to the multiplicity of $\nu_0$. Similarly, the expansion \eqref{propexp} of the propagator as $t \to \infty$ would have rank equal to the multiplicity of $\nu_0$.
\end{remark} 

\section{Index of Notation}


\begin{tabbing}
\bf{Notation} \hskip 25pt \= \bf{Description/definition of notation} \hskip 60pt \= \bf{Reference} \\
 \> \> \\
$(M^\circ, g)$ \> asymptotically conic manifold \> Section~\ref{sect:Geometricsetting}\\
$M$ \> compactification of $M^\circ$ to manifold with boundary \> Section~\ref{sect:Geometricsetting} \\
$x$; $r$ \> $x$ is a boundary defining function on $M$, $x = 1/r$ \> Section~\ref{sect:Geometricsetting} \\
$g$ \> scattering metric on $M^\circ$ \> Section~\ref{sect:Geometricsetting} \\
$h(x)$ \> family of metrics on $\partial M$ \> Section~\ref{sect:Geometricsetting} \\
$V$ \> potential function on $M$ \> Section~\ref{sect:Geometricsetting} \\
$V_0$ \> leading asymptotic of $V$ at $\partial M$, equal to $x^{-2} V |_{\partial M}$ \> Section~\ref{sect:Geometricsetting} \\
$\Delta_g$ \> positive Laplacian on $(M, g)$ \> Section~\ref{sect:Geometricsetting} \\
$\Delta_{\partial M}$ \> positive Laplacian with respect to $h(0)$ on $L^2(\partial M)$ \> Section~\ref{sect:Geometricsetting} \\
$\nu_j^2$ \> eigenvalue of $\Delta_{\partial M} + (n-2)^2/4 + V_0$ on $L^2(\partial M)$ \> Section~\ref{sect:Geometricsetting} \\
$P$ \> $\Delta_g + V$ \> Section~\ref{sect:Geometricsetting} \\
$P_+$ \> positive spectral part of $P$, $\indic_{(0, \infty)}(P) \circ P$ \> Section~\ref{sect:Geometricsetting} \\
$R(\sigma)$ \> $(P - \sigma^2)^{-1}, \ \Im \sigma > 0$ \> Section~\ref{sect:Geometricsetting} \\
$R(\lambda + i0)$ \> boundary value of $R(\sigma)$ for $\Re \sigma = \lambda \in \RR$, $\Im \sigma \downarrow 0$ \> \\
$[X; Y]$ \> real blowup of a manifold with corners $X$ at $Y$ \> Section~\ref{5.1} \\
$\MMkb$ \> blowup of $[0,1] \times M \times M$ \> Section~\ref{5.1} \\
$\MMksc$ \> further blowup of $\MMkb$ \> Section~\ref{5.1} \\
$\zf$, $\lb$, $\rb$, $\bfc$,  \> boundary hypersurfaces of  $\MMkb$ \> Section~\ref{5.1} \\
$\lbo$, $\rbo$, $\bfo$ \> \>  \\
$\Delta_{k,b}$ \> closed lifted diagonal in $\MMkb$ \> Section~\ref{5.1} \\
$E$, $\mathcal{E}$ \> index set, index family \> Section~\ref{sect:phg} \\
$E_1 + E_2$, $E_1 \extunion E_2$ \> \ \ Addition and extended union of index sets \> Section~\ref{sect:phg} \\
Integral, one-step  \> \ \ Properties of index sets \> Section~\ref{sect:phg} \\
$\min E$ \> minimal exponent in an index set \> Section~\ref{sect:phg} \\
$\Vb(M), \Vsc(M)$ \> b-vector fields, resp. scattering vector fields on $M$ \> Section~\ref{sect:compcotbundle} \\
$\rho$ \> $x/\lambda$ \> Section~\ref{sect:compcotbundle} \\
$\mathcal{V}_{k,b}(\Mkb)$ \>  certain Lie algebra of vector fields  on $\Mkb$ \> Section~\ref{sect:compcotbundle} \\
$\mathcal{V}_{k,b}(\MMkb)$ \> analogous Lie algebra of vector fields of $\MMkb$ \> Section~\ref{sect:compcotbundle} \\
$\Tkb \Mkb$ \> compressed tangent bundle  of $\Mkb$ \> Section~\ref{sect:compcotbundle} \\
$\Tkb \MMkb $ \> compressed tangent bundle of $\MMkb$ \> Section~\ref{sect:compcotbundle} \\
$\Tkbstar \Mkb $ \> compressed cotangent bundle  of $\Mkb$ \> Section~\ref{sect:compcotbundle} \\
$\Tkbstar \MMkb $ \> compressed cotangent bundle of $\MMkb$ \> Section~\ref{sect:compcotbundle} \\
$\nu, \mu, \nu', \mu', T$ \> coordinates on the fibres of $\Tkbstar \MMkb$ \> Section~\ref{sect:compcotbundle} \\
$\Omegakb(\MMkb)$ \> compressed density bundle of $\MMkb$ \> Section~\ref{sect:densities} \\
$\sigma$ \> $x/x'$ \> Section~\ref{facs} \\
$Z_\bullet$ \> base of fibration on boundary hypersurface $\bullet $ of $\MMkb$ \> Section~\ref{facs} \\
$\Nsfstar Z_\bullet$ \> bundle over $Z_\bullet$, admitting contact structure \> Section~\ref{facs} \\
$L$, $\Lambda$ \> Legendre submanifold \> Section~\ref{sec:legsub} \\
$(\Lambda_0, \Lambda_1)$ \> intersecting pair of Legendre submanifolds \> Section~\ref{sec:legsub} \\
$(\Lambda, \Lambda^\sharp)$ \> conic Legendre pair \> Section~\ref{sec:legsub} \\
$\hat \Lambda$ \> blowup of $\Lambda$ to $[\Nscstar Z_{\bfc}; \textrm{span} \Lambda^\sharp]$ \> \eqref{lambdah} \\
$\Nscstar \Diagb$ \> boundary of the conormal bundle to the diagonal  \> Section~\ref{sec:legsub} \\
$\Lbf$ \> propagating Legendrian \> Section~\ref{sec:legsub} \\
$L^\sharp_\mp$ \> incoming/outgoing Legendrian \> Section~\ref{sec:legsub} \\
$\sigma_l(P - \lambda^2)$ \> symbol of $P - \lambda^2$ acting in the left factor \> Section~\ref{sec:legsub} \\
$\sigma_r(P - \lambda^2)$ \> symbol of $P - \lambda^2$ acting in the right factor \> Section~\ref{sec:legsub} \\
$V_l$ \> Hamilton vector field of $\sigma_l(P - \lambda^2)$ \> Section~\ref{sec:legsub} \\
$V_r$ \> Hamilton vector field of $\sigma_r(P - \lambda^2)$ \> Section~\ref{sec:legsub} \\
$ I^{m, r_{\lb}, r_{\rb}; \mcA}(\MMkb, \Lambda; \Omegakbh)$  \> \> \\  \> space of Legendre distributions 
\> Defn~\ref{defn:legdist} \\
$ I^{m, r_{\lb}, r_{\rb}; \mcA}(\MMkb, (\Lambda_0, \Lambda_+); \Omegakbh)$ \> \> \\ \> space of intersecting Legendre distributions \> Defn~\ref{defn:intlegdist} \\
$ I^{m, p; r_{\lb}, r_{\rb}; \mcA}(\MMkb, (\Lambda, \Lambda^\sharp); \Omegakbh)$ \> \> \\ \> space of distributions assoc. to Legendre conic pair \> Defn~\ref{defn:coniclegdist} \\
$S^{[m]}(\Lambda)$ \> line bundle over $\Lambda$ \> \eqref{S[m]defn} \\
$\mathcal{L}_{V_l}$ \> Lie derivative of $V_l$ on half-densities \> Section~\ref{sect:symbolcalc} \\
$g_{\conic}$ \> $dr^2 + r^2 h$ on $(0, \infty) \times Y$ \> Section~\ref{exactcone} \\
$P_{\conic}$ \> homogeneous Schr\"odinger operator on metric cone \> \eqref{conicSchr} \\
$P_{b, \conic}$ \> $r P_{\conic} r$ \> \eqref{Pb} \\
$\gcyl$; $|d\gcyl|$ \> $r^{-2} g_{\conic}$; $|dr/r dh|$ \> Section~\ref{exactcone}  \\
$Z$ \> $(0, \infty) \times Y$ \> Section~\ref{sect:coniccomp} \\
$Z^2_b$ \> b-double space of $Z$ \> \eqref{Z2bdefn} \\
$Z^2_{b, \sca}$ \> further blowup of $Z^2_b$ \> \eqref{Z2bscdefn} \\
$g_b$; $|dg_b|$ \> $r^{-2} g$; $r^{-n} |dg|$ \> Section~\ref{sect:constructionGb} \\
$T_\bullet^k$ \> coefficient of $\lambda^k$ in expansion of $T$ at $\bullet$ \> Section~\ref{sect:constructionGb} \\

\end{tabbing}


\begin{thebibliography}{99} 

\bibitem{AS} 
{M.~Abramowitz and I.A. Stegun,}
\newblock {\em  Handbook of mathematical functions with formulas, graphs, and
              mathematical tables,}  
\newblock {National Bureau of Standards Applied Mathematics Series}, {55},
U.S. Government Printing Office, Washington, D.C. (1964).



\bibitem{BH0} J.-F.  Bony, D. H\"afner, \emph{Decay and non-decay of the local energy for the wave equation in the De Sitter - Schwarzschild metric}, Comm. Math. Phys. \textbf{282}, no. 3, 697-719.

\bibitem{BH1}  J.-F.  Bony, D. H\"afner, \emph{The semilinear wave equation on asymptotically Euclidean manifolds},  arXiv:0810.0464.

\bibitem{BH2}  J.-F.  Bony, D. H\"afner, \emph{Low frequency resolvent estimates for long range perturbations of the Euclidean Laplacian},  Math. Res. Lett.  \textbf{17}  (2010),  no. 2, 301--306. 

\bibitem{BH3} J.-F. Bony, D. H\"afner, \emph{Local energy decay for several evolution equations on asymptotically 
euclidean manifolds},  arXiv:1008.2357. 


\bibitem{Bouclet} J.-M. Bouclet, \emph{Low energy behaviour of powers of the resolvent of long range perturbations of the Laplacian}, Proc. Centre Math. Appl. Aust. Nat. Univ. \textbf{44}, 2010, 115--127. 

\bibitem{Bou} J.-M. Bouclet \emph{Low frequency estimates and local energy decay for asymptotically euclidean Laplacians}, 
arXiv:1003.6016. 

\bibitem{BS} J. Br\"uning and R. Seeley, \emph{The resolvent expansion for second order regular singular operators}, J. Funct. Anal. \textbf{73}, no. 2, (1987), 369--429.


\bibitem{Callias} C. Callias, \emph{The heat equation with singular coefficients. I.},  Comm. Math. Phys. \textbf{88}, no. 3 (1983), 357--385.

\bibitem{CT} J. Cheeger and M. Taylor, \emph{On the diffraction of waves by conical singularities I, II}, Comm. Pure Appl. Math. \textbf{35} (1982), 275--331, 487--529. 

\bibitem{DSS1} R. Donninger, W. Schlag, A. Soffer,  \emph{A proof of Price's Law on Schwarzschild black hole manifolds for all angular momenta}, arxiv arXiv:0908.4292

\bibitem{DSS} R. Donninger, W. Schlag, A. Soffer, \emph{On pointwise decay of linear waves on a Schwarzschild black hole background}, arXiv:0911.3179. 

\bibitem{GH1} C. Guillarmou, A. Hassell, 
\emph{Resolvent at low energy and Riesz transform for Schrodinger operators on asymptotically conic manifolds, I }.
Math. Ann. \textbf{341} (2008), 859--896.

\bibitem{GH2} C. Guillarmou, A. Hassell, 
\emph{Resolvent at low energy and Riesz transform for Schrodinger operators on asymptotically conic manifolds, II}, Annales de l'Institut Fourier \text{59} (2009), 1553 -- 1610.


\bibitem{GHS2} C. Guillarmou, A. Hassell, A. Sikora, \emph{Restriction and spectral multiplier theorems on asymptotically conic manifolds}, arXiv:1012.3780.

\bibitem{asatet} A. Hassell, R. Mazzeo and R. B. Melrose, \emph{Analytic surgery and the accumulation of eigenvalues}, Commun. in Anal. and Geom. \textbf{3} (1995), 115-222. 

\bibitem{HV1} A.~Hassell and A.~Vasy, \emph{The spectral projections and the resolvent for scattering metrics}, J. d'Analyse Math. \textbf{79} (1999), 241--298. 


\bibitem{HV2} A.~Hassell and A.~Vasy, \emph{The resolvent for Laplace-type operators on asymptotically conic spaces.},  Ann. Inst. Fourier (Grenoble)  \textbf{51}(5) (2001), 1299-1346.


\bibitem{HW} A. Hassell and J. Wunsch, \emph{The semiclassical resolvent and the propagator for non-trapping scattering metrics}, Adv. Math. \textbf{217} (2008), no. 2, 586--682. 

\bibitem{Ho} Lars H{\"o}rmander. \newblock {\emph The analysis of linear partial differential operators. {I}}. \newblock Springer-Verlag, Berlin, 1983, 1990. 

\bibitem{JK} A. Jensen, T. Kato, \emph{Spectral properties of Schr\"odinger operators and time-decay of the wave functions}, 
Duke Math. J. \textbf{46} (1979), 583-611.


\bibitem{Li} H.-Q. Li, \emph{La transform\'ee de Riesz sur les vari\'et\'es coniques}, J. Funct. Anal. \textbf{168}
(1999), no 1, 145-238.


\bibitem{APS} R.B. Melrose, \emph{The Atiyah-Patodi-Singer index theorem} 
(AK Peters, Wellesley, 1993).


\bibitem{Kyoto} R.B. Melrose, \emph{Pseudodifferential operators, corners and singular limits}, in Proceedings of the International Congress of Mathematicians, Vol. I, II (Kyoto, 1990), 217--234, Math. Soc. Japan, Tokyo, 1991. 

\bibitem{MS} R.B. Melrose, A. Sa Barreto, \emph{Zero energy limit for scattering manifolds}, unpublished note.

\bibitem{scatmet}
R.B. Melrose, Spectral and scattering theory for the Laplacian on
  asymptotically Euclidian spaces, in {\it Spectral and Scattering Theory}, M. Ikawa, ed., Marcel Dekker, 1994.
  
\bibitem{cocdmc}
R.B. Melrose, \emph{Calculus of conormal distributions on manifolds with corners}, Int. Math. Res. Not.  3 (1992), 51--61.

\bibitem{MZ}
R. B. Melrose and M. Zworski, \emph{Scattering metrics and geodesic flow
  at infinity}, Invent. Math. \textbf{124} (1996), no.~1-3, 389--436.
 

\bibitem{Murata} M. Murata, \emph{Asymptotic expansions in time for solutions of Schr\"odinger-type equations}, 
\textbf{49}, 10-56 (1982), 10-56.

\bibitem{Pr1} R.H. Price. \emph{Nonspherical perturbations of relativistic gravitational collapse. II. Integer-spin, zero-rest-mass 
fields.}  Phys. Rev. D (3),  (1972) 5:24392454.

\bibitem{Pr2} R.H. Price, L.M. Burko. \emph{Late time tails from momentarily stationary, compact initial data in 
Schwarzschild spacetimes.} Phys. Rev. D, 70(8) (2004) 084039.


\bibitem{SBZ} A. S\'a Barreto, M. Zworski, \emph{Distribution of resonances for spherical black holes}, Math. Res. Lett. \textbf{4} 
(1997), no. 1, 103--121.  

\bibitem{Ta} D. Tataru, \emph{Local decay of waves on asymptotically flat stationary space-times},  arXiv:0910.5290; Amer. J. Math., to appear. 

\bibitem{VW} A. Vasy and J. Wunsch, \emph{Positive commutators at the bottom of the spectrum},  J. Funct. Anal. \textbf{259} (2010), no. 2, 503--523. 

\bibitem{XPW} X-P. Wang, \emph{Asymptotic expansion in time of the Schr\"odinger group on conical manifolds},
to appear, Annales Inst. Fourier (2006). 

\bibitem{Yafaev} D. Yafaev, \emph{Scattering theory: some old and new problems}, Springer Lecture Notes in Mathematics, vol. 1735, 2000. 


\end{thebibliography}
\end{document}